\newcommand{\les}{\lesssim}
\newcommand{\R}{\mathbb R}
\newcommand{\Z}{\mathbb Z}
\newcommand{\p}{\partial}
\numberwithin{equation}{section}
\newtheorem{theorem}{Theorem}[section]
\newtheorem{proposition}[theorem]{Proposition}
\newtheorem{remark}[theorem]{Remark}
\newtheorem{lemma}[theorem]{Lemma}
\newtheorem{corollary}[theorem]{Corollary}
\newtheorem{claim}[theorem]{Claim}
\subjclass[2020]{35Q35,35Q53,35B05,35B60}
\begin{document}
\vglue-1cm \hskip1cm
\title[
The generalized fractional KdV equation in  weighted Sobolev spaces
]{The generalized fractional KdV equation in  weighted Sobolev spaces
}

\author[A. Cunha]{Alysson Cunha}
\address{Universidade Federal de Goi\'as (UFG), Campus Samambaia, 131, 74001-970, Goi\^ania, Bra\-zil}
\email{alysson@ufg.br}

\author[O. Ria\~no]{Oscar Ria\~no}
\address{Departamento de Matem\'aticas, Universidad Nacional de Colombia Carrera 45 No. 26-85, Edificio Uriel Guti\'errez Bogot\'a D.C., Colombia}
\email{ogrianoc@unal.edu.co}

\begin{abstract}

This work concerns the study of persistence property in polynomial weighted spaces for solutions of the generalized fractional KdV equation in any spatial dimension $d\geq 1$. By establishing well-posedness results in conjunction with some asymptotic at infinity unique continuation principles, it is verified that dispersive effects and dimensionality mainly determine the maximum spatial decay allowed by solutions of this model. In particular, we recover and extend some known results on weighted spaces for different models such as the Benjamin-Ono equation, and the dispersion generalized Benjamin-Ono equation. The estimates obtained for the linear equation seem to be of independent interest, and they are useful to obtain persistence properties in weighted spaces for models with different nonlinearities as the fractional KdV equation with combined nonlinearities.

\end{abstract}

\maketitle


\section{Introduction}

We consider the initial value problem (IVP) associated to the generalized fractional Korteweg-de Vries equation (fKdV)
    \begin{equation}\label{HBO-ZK}
    \left\{\begin{aligned}
    &\partial_t u -\partial_{x_1}D^{a} u+\nu u^{k-1}\partial_{x_1}u=0,\quad x=(x_1,\dots,x_d)\in \mathbb{R}^{d}, \, \, t\in \mathbb{R}, \, \, a\in (0,2), \\
    &u(x,0)=u_0(x),
    \end{aligned}\right.
    \end{equation}
where $u=u(x,t)$ is a real-valued function, $k\geq 2$ is an integer, $\nu\in \{1,-1\}$,  $D^{s}f=(-\Delta)^{\frac{s}{2}}f$, $s\in \mathbb{R}$ denotes the Riesz potential of negative order, which is defined through the Fourier transform as $D^s f(x)=\mathcal{F}^{-1}\big(|\xi|^{s}\widehat{f}(\xi)\big)(x), \quad |\xi|=\sqrt{\xi_1^2+\cdots+\xi_d^2}$.  For several values of dispersion $0<a\leq 2$, and nonlinearity $k\geq 2$, the equation in \eqref{HBO-ZK} has been used to model physical situations. Setting $a=1$, and using the fact that $D^2=-\Delta$, we have that fKdV coincides with the $k$-generalized $d$-dimensional Benjamin-Ono equation
\begin{equation}\label{gBO}
    \partial_tu-\mathcal{R}_1\Delta u+ \nu u^{k-1}\partial_{x_1}u=0, \quad x\in \mathbb{R}^d,\, \, t \in \mathbb{R},
\end{equation}
where $\mathcal{R}_1=-\partial_{x_1}D^{-1}$ denotes the Riesz transform operator in the $x_1$ variable. In particular, when $d=1$, the Fourier multiplier of the Riesz transform agrees with that of the Hilbert transform, thus fKdV with $a=1$, $d=1$ is the $k$-generalized Benjamin-Ono equation, which has been used to describe wave phenomena, see for example \cite{AbBOnaFellSaut1989,Benjamin1967,BonaKalisch2004,Ono1975}. The case $d=2$ and $k=2$ in the equation in \eqref{gBO}  has been applied to model one-dimensional internal waves in stratified fluids in $\mathbb{R}^3$, see \cite{A,Shrira1989,PS,VS}.  When $a=2$, the equation in \eqref{HBO-ZK} agrees with
\begin{equation}\label{KdVtype}
\partial_{t}u+\partial_{x_1}\Delta u+\nu u^{k-1}\partial_{x_1}u=0, \quad x\in \mathbb{R}^d,\, \, t \in \mathbb{R}.
\end{equation}
Setting $d=1$, \eqref{KdVtype} is the generalized Korteweg–De Vries (KdV) equation, which appears in various physical contexts such as shallow-water waves with weakly non-linear restoring forces, long internal waves in a density-stratified ocean,  ion-acoustic  waves  in plasma,  among  many  others, we refer to \cite{BonaColinLannes2005,Bona1981,JeffreyKakutani1972,ScottChuMcLau1973}. The case $d\geq 2$ is known as the Zakharov-Kuznetsov equation (ZK), which has been applied to model weakly nonlinear ion-acoustic waves in the presence of a uniform magnetic field, see, for instance, \cite{NaumkinShishmar1994,ShriraVoronoVyac1996,ZakharovKuznet1974}.

The case $1<a<2$ serves as a natural dispersive interpolation between the generalized $d$-dimensional Benjamin-Ono equation \eqref{gBO} and the generalized KdV/ZK equations \eqref{KdVtype}. The case $d=1$ and $1<a<2$ in \eqref{HBO-ZK} is also recognized as the dispersion generalized Benjamin-Ono equation. Likewise, the case $0<a<1$ has been used to measure the effects of dispersion and nonlinearity in the dynamic of Burger's equation. Due to the flexibility between dimension, dispersion, and nonlinearity, the fKdV equation has been used to understand the competition between dispersion versus nonlinearity in a $d$-dimensional model. For example, we refer to the studies of wave breaking when $d=1$, $0<a<2$, $2\leq k\leq 3$ in \cite{Hur2017,VeraTaoL2014,KleinSautWang2022,SautWang2022}, the numerical investigations in \cite{KleinLinaresPilodSaut2018,KleinSaut2015,KleinSautWang2022,RianoRoudenkoYang2022}, the study of unique continuation principles \cite{KenigPilodPonceVega2020,FLinaPioncedGBO, Riano2021,LinaresPonce2023},  nonexistence results of breathers in \cite{LinaresMendezPonce2021}, construction of $N$-solitons solutions \cite{Eychenne2023}, existence of dipole solutions \cite{EychenneValet2023}, scattering results \cite{SautWang2021,SautWang2021II}, propagation of regularity principle \cite{Argenis2020,MendezA2020,Argenis2023},  and the study of blow-up solutions in \cite{OhPasqualotto2021}.

Setting $0<a<2$, $\nu=1$, $2\leq k< k^{\ast}$ be integer, where $k^{\ast}=\frac{d+a}{d-a}$ if $0<a<d$, and $k^{\ast}=\infty$, if $a\geq d$, the fKdV equation has a family of solitary-waves (or traveling wave) of the form $u(x_1,\dots,x_d,t)=Q_c(x_1-ct,\dots,x_d)$, where $c>0$, and $Q_c$ satisfies the equation
\begin{equation}\label{EQ:groundState}
 cQ_c+D^{a}Q_c-\frac{1}{k}Q^{k}_c=0.
\end{equation}
Alternatively, we find that $Q_c(x)=c^{\frac{1}{k-1}} Q(c^{\frac{1}{a}}x)$, where $Q$ solves equation \eqref{EQ:groundState} with $c=1$. The existence, uniqueness, and decay properties of solutions to \eqref{EQ:groundState} have been studied in \cite{FranLenzSilve2016,FranLenz2013}. When $2\leq k< k^{\ast}$, it is known that the positive radial symmetric solution $Q\in H^{a+1}(\mathbb{R})\cap C^{\infty}(\mathbb{R}^d)$ satisfies for all $x\in \mathbb{R}^d$
\begin{equation}\label{poldecayGS}
    \frac{A_1}{1+|x|^{d+a}} \leq Q(x) \leq \frac{A_2}{1+|x|^{d+a}},
 \end{equation}
for some positive constants $A_1$ and $A_2$. Regarding stability and instability issues of solitary wave solutions, we refer to \cite{Albert1992,AnguloBonaLinaresScialom2002,LinaresPilodSaut2015,Angulo2018,RianoRoudenko2022} and the references therein. 
\\ \\
The well-posedness in $H^s(\mathbb{R}^d)$ for the equation \eqref{HBO-ZK} has been extensively investigated. In the case $d=1$, we refer to \cite{KenigIonescu2007,MolinetPilod2012,IfrimTata2019,KenigTakaoka2006,MolinetRibaud2004,Vento2010,MolinetPilodVentp2018,HerrIonesKenKoch2010,KenigPonceVega1991,Guo2012,KochT}. In higher dimension $d\geq 2$, see \cite{Schippa2020,HickmanLinaresRiano2019}. For a more detailed description of the well-posedness in $H^s$ for fKdV, see the introduction in \cite{RianoRoudenko2022}. Concerning the well-posedness in weighted spaces, when $d=1$, $1\leq a<2$, $k=2$ in fKdV, see \cite{FLinaPioncedGBO,FonPO,Iorio1986,Iorio2003}, $d=1$, $-1\leq a<1$, $a\neq 0$, $k=2$, see \cite{Riano2021}, and $d\geq 2$, $a=1$ with $k=2$, see \cite{OscarWHBO}. 
\\ \\
Formally, for all $0<a<2$, $k \geq 2$ integer, real solutions of \eqref{HBO-ZK} satisfy three conservation laws:
\begin{equation}\label{ConservationlawsHBO-ZK}
\begin{aligned}
   I_{1}[u]&=\int_{\mathbb{R}} u(x_1,\dots,x_d,t) \, dx_1,\qquad I_{2}[u]=\int_{\mathbb{R}^d} \big(u(x,t)\big)^2 \, dx, \\
   I_{3}[u]&=\frac{1}{2}\int_{\mathbb{R}^d} \left|D^{\frac{a}{2}} u(x,t)\right|^2\, dx- \frac{\nu}{k(k+1)}\int_{\mathbb{R}^d}u^{k+1}(x,t) \, dx.
\end{aligned}
\end{equation}

This paper aims to investigate how dispersive effects and dimensionality influence the propagation of polynomial decay for solutions of dispersive equations, whose dispersion is characterized by some nonlocal operator. Typically, solutions to such models do not persist in arbitrary large polynomial weighted spaces, in particular, the Schwartz class of functions is not preserved by solutions of those equations. In this sense, we investigate the following question: \emph{What is the largest $r>0$, for which one can take an initial condition $u_0\in H^s(\mathbb{R}^d)\cap L^2(|x|^{2r}\, dx)$ such that there exists a nontrivial solution of \eqref{HBO-ZK} with initial condition $u_0$ which persists in the same space (i.e., $u\in C([0,T];H^s(\mathbb{R}^d)\cap L^2(|x|^{2r}\, dx))$?} To address such a problem, we consider the fKdV equation, which besides being a model of active investigations, it is interesting due to its versatility concerning dimension $d\geq 1$, dispersion $0<a<2$, and nonlinearity $k\geq 2$, which ultimately offer a robust model to test different effects in the evolution of nonlinear dispersive equations as well as to set a general framework on the road to a more unified approach, which might be extended to other equations. We emphasize that the type of questions investigated in this article are related to the study of \emph{asymptotic at infinity unique continuation principle} for dispersive equations, for more details see the recent work of Linares and Ponce \cite{LinaresPonce2023}, where the authors detail progress and studies on unique continuation principles that include those developed here. For other unique continuation involving the difference of solutions, we refer to \cite{Cunha2023,Cunha2022}.

Concerning our results, in the pure nonlocal case $0<a<2$, $k\geq 2$ integer in fKdV, we will show that under the effects of nonlinearity of polynomial type, dispersive effects and dimension rather than nonlinearity characterize the maximum $L^2$-polynomial decay allowed by solutions of \eqref{HBO-ZK}. Such behavior is consistent with the decay of the ground state solution $Q$ in \eqref{poldecayGS}. The studies in this paper are divided in two-fold: one is to prove a positive result in the class $H^s(\mathbb{R}^d)\cap L^2(|x|^{2r}\, dx)$, where we establish precise conditions on $s$ and $r>0$ that lead to a local well-posedness theory for solutions of \eqref{HBO-ZK}. Secondly, we prove some asymptotic at infinity unique continuation results, which are applied to obtain the sharpness of our local well-posedness results as well as to characterize the $L^2$-spatial decay for solutions of \eqref{HBO-ZK}. We remark that our results contrast with the case $a=2$ in fKdV (i.e., KdV, ZK equations \eqref{KdVtype}), which propagate exponential weights, and thus polynomial weights of arbitrary size, see \cite{CossetiFanelliLinares2019,EscaKenigPonVega2007,IsazaLinaresPonce2013,Kato1983}.

The study of the persistence property in polynomial weighted spaces for dispersive equations characterized by nonlocal operators has been actively investigated, for example, see \cite{FLinaPioncedGBO,FLinaPonceWeBO,FonPO,Iorio1986,Iorio2003,Riano2021}, and concerning models in higher dimensions, see \cite{CunhaPastor2014,CunhaPastor2016,CunhaPastor2021,Riano2021,OscarWHBO,Riano2021II}. A novelty in this paper is a detailed study of the linear equation associated to fKdV in any spatial dimension $d\geq 1$, and dispersion $0<a<2$, see the results in Lemmas \ref{linearestilemma} and \ref{unicontlemma}, which exactly determine the relation between dispersion and dimension in the propagation of fractional polynomials. The results for the linear equation in weighted spaces are quite useful as they can be applied to models with different nonlinearities and dimensions. As a consequence, we recover known results for some models such as the generalized Benjamin-Ono equation. Furthermore, for a given integer $n\geq 2$, our studies on the linear equation also yield new conclusions for different nonlinearities as in the case of the IVP 
\begin{equation}\label{CombHBO-ZK}
    \left\{\begin{aligned}
    &\partial_t u -\partial_{x_1}D^{a} u+\sum_{j=1}^n\nu_j u^{k_j-1}\partial_{x_1}u=0,\quad x=(x_1,\dots,x_d)\in \mathbb{R}^{d}, \, \, t\in \mathbb{R}, \, \, a\in (0,2), \\
&u(x,0)=u_0(x),
\end{aligned}\right.
\end{equation}
$\nu_j\in\{-1,1\}$, $k_j\geq 2$ be integer, $j=1,\dots,n$. Consequently, we observe that in the presence of polynomial type nonlinearities, the linear part of the equation in \eqref{HBO-ZK} mainly influences the $L^2$-spatial behavior of solutions of the model in \eqref{CombHBO-ZK}.


\subsection{Main Results}

Besides some constraints on the spatial decay, we also have some regularity restrictions, which are related through the following numbers:
\begin{equation}\label{defis1}
    s_{d,k,1}:=\frac{1}{2}\Big(\frac{d}{2}+\frac{k}{k-1} \Big)+\Big(\frac{1}{4}\Big(\frac{d}{2}+\frac{k}{k-1} \Big)^2-\frac{d}{2}\Big)^{\frac{1}{2}},
\end{equation}
and
\begin{equation}\label{defis2}
    s_{d,k,2}:=\frac{1}{2}\Big(\frac{d}{2}+\frac{k}{k-1} \Big)-\Big(\frac{1}{4}\Big(\frac{d}{2}+\frac{k}{k-1} \Big)^2-\frac{d}{2}\Big)^{\frac{1}{2}}.
\end{equation}
Our first result establishes local well-posedness for the Cauchy problem \eqref{HBO-ZK} in weighted spaces.

\begin{theorem}\label{LWPweights} Let $d\geq 1$ be integer, $0<a<2$, $k\geq 2$ be integer, and $\nu\in \{1,-1\}$.

\begin{itemize}
    \item[(i)] If $0<r<1$, $s>\frac{d}{2}+1$, 
    then the Cauchy problem \eqref{HBO-ZK} is locally well-posed in $H^{s}(\mathbb{R}^d)\cap L^2(|x|^{2r}\, dx)$.
    \item[(ii)] If $1\leq r< a+1+\frac{d}{2}$, $s\geq\{(\frac{d}{2}+1)^{+},ar+1\}$ with $s\in (0,s_{d,k,2})\cup (s_{d,k,1},\infty)$, then the Cauchy problem \eqref{HBO-ZK} is locally well-posed in $H^{s}(\mathbb{R}^d)\cap L^2(|x|^{2r}\, dx)$.
   \item[(iii)] If $a+1+\frac{d}{2} \leq  r< a+2+\frac{d}{2}$, $s\geq\{(\frac{d}{2}+1)^{+},ar+1\}$ with $s\in (0,s_{d,k,2})\cup (s_{d,k,1},\infty)$, then the Cauchy problem \eqref{HBO-ZK} is locally well-posed in 
    \begin{equation*}
     H^{s}(\mathbb{R}^d)\cap L^2(|x|^{2r}\, dx)\cap\{f\in H^s(\mathbb{R}^d): \int_{\mathbb{R}^d} f(x)\, dx=0\}.   
    \end{equation*}
\end{itemize}
\end{theorem}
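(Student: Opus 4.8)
The plan is to run a standard contraction-mapping argument on the Duhamel formulation
\[
u(t) = U(t)u_0 - \nu\int_0^t U(t-t')\big(u^{k-1}\partial_{x_1}u\big)(t')\,dt',
\]
where $U(t)$ denotes the unitary group associated to the linear part $\partial_t - \partial_{x_1}D^a$, in a closed ball of the Banach space $C([0,T];H^s(\mathbb{R}^d)\cap L^2(|x|^{2r}\,dx))$ (with the additional zero-mean constraint in part (iii)). The $H^s$ part of the estimates is classical: for $s>\frac d2+1$ the space $H^s(\mathbb R^d)$ is an algebra embedded in $W^{1,\infty}$, so the nonlinearity $u^{k-1}\partial_{x_1}u$ is controlled in $H^s$ by $\|u\|_{H^s}^k$, and the usual energy/commutator (Kato–Ponce) estimates close the argument on a time interval $T$ depending on $\|u_0\|_{H^s}$. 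What is genuinely new, and where essentially all the work lies, is propagating the weight: one must show $|x|^{r}U(t-t')(u^{k-1}\partial_{x_1}u)$ stays in $L^2$. By Plancherel this amounts to bounding $\|\mathcal D^{r}_\xi\big(e^{it'\psi(\xi)}\widehat{u^{k-1}\partial_{x_1}u}\big)\|_{L^2}$ where $\psi(\xi)=\xi_1|\xi|^a$ (up to sign) and $\mathcal D^r_\xi$ is the fractional-derivative operator dual to $|x|^r$; here is where Lemma \ref{linearestilemma} enters decisively, since that lemma is exactly the tool that quantifies how many $\xi$-derivatives the phase $e^{it'\xi_1|\xi|^a}$ can absorb before producing a non-integrable singularity — and the outcome $|x|^r$ with $r<a+1+\frac d2$ (resp. the next window after removing the mean) is precisely the threshold coming from the behaviour of $\partial_\xi^2(\xi_1|\xi|^a)$ near the origin.

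The steps, in order, would be: (1) set up the integral equation and the metric space, splitting the weight according to the three regimes $0<r<1$, $1\le r<a+1+\frac d2$, $a+1+\frac d2\le r<a+2+\frac d2$; (2) for the linear term, apply Lemma \ref{linearestilemma} to bound $\||x|^r U(t)u_0\|_{L^2}$ by $\|u_0\|_{H^{ar+1}} + \||x|^r u_0\|_{L^2}$ (with the zero-mean hypothesis used in regime (iii) to kill the obstruction term), which explains the regularity condition $s\ge ar+1$; (3) for the Duhamel term, combine the same linear weighted estimate with an interior estimate on the nonlinearity — here one needs $|x|^r(u^{k-1}\partial_{x_1}u)\in L^2$, which by a Leibniz-type rule for $\mathcal D^r_\xi$ distributes the weight and derivatives over the $k$ factors, producing terms of the form $\||x|^r u\|_{L^2}\|u\|_{W^{1,\infty}}^{k-1}$ and, when the full weight lands on the factor carrying $\partial_{x_1}$, a term requiring control of $\|u\|_{H^{s}}$ with $s\gtrsim ar+1$; integrating in $t'\in[0,T]$ gains the small factor $T$ (or a power of $T$) needed for the contraction; (4) verify that the fixed point indeed lies in $C([0,T];\cdot)$ rather than merely $L^\infty_T$ (standard, via the limiting argument and weak continuity), and check persistence of the zero-mean condition in (iii) — the latter because $\int u^{k-1}\partial_{x_1}u\,dx=0$ and $\widehat{U(t)f}(0)=\widehat f(0)$, so the mean is conserved by the flow.

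The regularity windows $s\in(0,s_{d,k,2})\cup(s_{d,k,1},\infty)$ deserve separate comment: they arise not from the contraction itself but from reconciling the two competing lower bounds on $s$, namely $s>\frac d2+1$ (for the algebra property) and $s\ge ar+1$ together with the \emph{upper} restriction $r<a+1+\frac d2$ coming from the weighted linear estimate; eliminating $r$ between $s\ge ar+1$ and an interpolation inequality of the form $\||x|^r u\|_{L^2}\lesssim \|u\|_{H^{s}}^{1-\theta}\||x|^{?}u\|_{L^2}^{\theta}$ used to close the nonlinear weighted estimate yields a quadratic inequality in $s$ whose roots are exactly $s_{d,k,1}$ and $s_{d,k,2}$ in \eqref{defis1}--\eqref{defis2}.

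The main obstacle I expect is step (3) in the top regime $a+1+\frac d2\le r<a+2+\frac d2$: there the naive weighted estimate for $U(t)u_0$ \emph{fails} unless $\widehat{u_0}(0)=0$, and for the Duhamel term one must check that the nonlinearity $u^{k-1}\partial_{x_1}u$ automatically has zero mean (it does, being an $x_1$-derivative of $\frac1k u^k$) so that Lemma \ref{linearestilemma} applies to it even though it need not apply to a generic function of that weighted regularity; handling the cross terms in the Leibniz expansion of $\mathcal D^r_\xi$ when $r>a+1$, where fractional integration by parts must be justified and the commutator between $\mathcal D^r_\xi$ and multiplication by $\widehat{u^{k-2}}$-type factors must be estimated, is the delicate technical core. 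The case $0<r<1$ in (i) is by contrast soft: $\mathcal D^r_\xi$ can be realized by the Stein-type pointwise formula and no hypothesis on $\widehat{u_0}(0)$ or on $s$ beyond the algebra threshold is needed.
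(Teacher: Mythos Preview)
Your plan has a genuine gap at the very first step: a contraction mapping in $C([0,T];H^{s}(\mathbb{R}^{d}))$ does \emph{not} close for this equation. The nonlinearity $u^{k-1}\partial_{x_{1}}u$ is controlled only in $H^{s-1}$, not in $H^{s}$, because one derivative falls on $u$; the Kato--Ponce product estimate gives $\|u^{k-1}\partial_{x_{1}}u\|_{H^{s-1}}\lesssim \|u\|_{H^{s}}^{k}$, and there is no way to recover that lost derivative from the group $U(t)$ (it is merely unitary on $H^{s}$). So the assertion that ``$u^{k-1}\partial_{x_{1}}u$ is controlled in $H^{s}$ by $\|u\|_{H^{s}}^{k}$'' is false, and the fixed-point argument in the stated Banach space collapses. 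The paper sidesteps this entirely: it first takes the $H^{s}$ solution produced by energy methods/parabolic regularization (Lemma~\ref{comwellp}), and then proves \emph{persistence} of the weight on that already-existing solution---no contraction is ever run.

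The weighted part is also organised differently from what you sketch. For (i) with $0<r<1$ the paper does not touch the Duhamel formula at all: it multiplies the equation by $w_{N}^{2r}u$ for a truncated weight $w_{N}\nearrow\langle x\rangle$, controls the dispersive commutator $[\,w_{N}^{r},\partial_{x_{1}}D^{a}\,]$ via Propositions~\ref{propconmu}--\ref{propcomm1} (splitting $1\le a<2$ and $0<a<1$), and closes by Gronwall. This energy step is what makes (i) hold under only $s>\tfrac{d}{2}+1$, with no constraint involving $s_{d,k,j}$. For (ii)--(iii) the paper then runs an \emph{iterative bootstrap} on the integral equation, not a contraction: starting from the decay $r_{1}<1$ supplied by (i), Lemma~\ref{lemmadecaynonlinear}/Corollary~\ref{corollarynonlin} show that $\partial_{x_{1}}(u^{k})$ carries the strictly larger weight $r_{2}=\theta_{s,k}r_{1}$ with $\theta_{s,k}=\tfrac{(s-1)(2ks-d(k-1))}{2s^{2}}$, and Lemma~\ref{linearestilemma} transports this back to $u$; iterating reaches the target $r$. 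The condition $s\in(0,s_{d,k,2})\cup(s_{d,k,1},\infty)$ is exactly $\theta_{s,k}>1$, i.e.\ that each iteration \emph{gains} decay---it is not obtained by eliminating $r$ between $s\ge ar+1$ and an interpolation bound as you suggest. Your direct estimate $\||x|^{r}u^{k-1}\partial_{x_{1}}u\|_{L^{2}}\lesssim\|u\|_{W^{1,\infty}}^{k-1}\||x|^{r}u\|_{L^{2}}$ is correct and could in principle replace the bootstrap once the $H^{s}$ solution is in hand, but that still requires you to abandon the contraction framework and argue by persistence, as the paper does.
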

The proof of parts (ii) and (iii) of Theorem \ref{LWPweights} relies on a careful analysis of the behavior of the linear part of the equation in \eqref{HBO-ZK} in weighted spaces.  Before we present such results for the linear equation, we introduce some notation. We denote by $U(t)$ the group of solutions determined by the equation $\partial_tu-\partial_{x_1}D^a u=0$, $x\in \mathbb{R}^d$, $t\in \mathbb{R}$, i.e.,
\begin{equation}\label{lineaeq}
U(t)f=(e^{it\xi_1|\xi|^a}\widehat{f}(\xi))^{\vee},
\end{equation}
where following the standard notation, $\wedge$ and $\vee$ denote the Fourier and the inverse Fourier transform operators, respectively. We also denote by $\langle x\rangle=(1+|x|^2)^{\frac{1}{2}}$, $x\in \mathbb{R}^d$. Below, $J^s$, $s\in \mathbb{R}$, denotes the Bessel operator, which is defined in Section \ref{notation}.
\begin{lemma}\label{linearestilemma}
Let $0<a<2$, $d\geq 1$ be integer. 
\begin{itemize}
\item[(i)] Consider $0<r<a+1+\frac{d}{2}$, and $f\in H^{a r}(\mathbb{R}^d)\cap L^2(|x|^{2 r}\, dx)$. Then for all $t\in \mathbb{R}$
\begin{equation}\label{linereq1}
    \|\langle x \rangle^r U(t)f\|_{L^2}\lesssim \langle t \rangle^{r}\big(\|J^{a r}f\|_{L^2}+\|\langle x \rangle^r f\|_{L^2}\big).
\end{equation}
\item[(ii)] The inequality \eqref{linereq1} also holds true for weights $a+m+\frac{d}{2}\leq r<a+1+m+\frac{d}{2}$ for some $m\in \mathbb{Z}^{+}$, if $f\in H^{a r}(\mathbb{R}^d)\cap L^2(|x|^{2 r}\, dx)$ with
\begin{equation}\label{linearEstimcomp1}
\begin{aligned}
\int_{\mathbb{R}^d} x^{\beta}f(x)\, dx=0, \text{ for all } |\beta|\leq m-1.
\end{aligned}    
\end{equation} 
\end{itemize}
\end{lemma}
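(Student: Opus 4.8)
The plan is to work on the Fourier side, where $U(t)f$ has symbol $e^{it\xi_1|\xi|^a}\widehat f(\xi)$, and to estimate $\|\langle x\rangle^r U(t)f\|_{L^2} = \|\langle D_\xi\rangle^r (e^{it\xi_1|\xi|^a}\widehat f)\|_{L^2_\xi}$ up to constants, using the equivalence of $\langle x\rangle^r$ with a combination of $J^r$-type multipliers after Fourier transform. Since $r$ need not be an integer, the multiplication by $\langle x\rangle^r$ corresponds to a fractional derivative of order $r$ in $\xi$, so the core object to control is $\mathcal{D}^{\theta}_\xi(e^{it\xi_1|\xi|^a}\widehat f)$ for the fractional part $\theta$, together with ordinary derivatives $\partial_\xi^\alpha$ for the integer part. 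For the integer part one simply applies the Leibniz rule: each $\partial_{\xi_j}$ falling on $e^{it\xi_1|\xi|^a}$ produces a factor $it\,\partial_{\xi_j}(\xi_1|\xi|^a)$, which is a homogeneous symbol of degree $a$ (with possible singularity at $\xi=0$ when $a<1$, and this is precisely why one needs $J^{ar}$ rather than $D^{ar}$ — the Bessel operator tames the low-frequency behavior); after $\ell$ such derivatives one gets $(it)^\ell$ times a symbol behaving like $\langle\xi\rangle^{a\ell}$ away from the origin and like $|\xi|^{a\ell-(\text{something})}$ near it, and the weight $\langle x\rangle^{r-\ell}$ on the remaining factor is then reabsorbed by induction. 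The key point is the bookkeeping: one wants the total gain in $\xi$-powers never to exceed $ar$, which is exactly what the hypothesis $f\in H^{ar}$ affords, and the power of $t$ never to exceed $r$, matching $\langle t\rangle^r$.

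First I would set up the fractional-derivative machinery: recall (or cite from Section \ref{notation}) the characterization of $L^2(\langle x\rangle^{2r}dx)$ via the operator $\mathcal{D}^\theta_\xi$ acting on $\widehat f$, and the Leibniz-type and chain-type estimates for $\mathcal{D}^\theta_\xi$ applied to a product $e^{i\Phi}g$. The crucial estimate is of Kato–Ponce / Kenig–Ponce–Vega type: $\|\mathcal{D}^\theta_\xi(e^{it\xi_1|\xi|^a}g)\|_{L^2}$ is bounded by $\|e^{it\xi_1|\xi|^a}\mathcal{D}^\theta_\xi g\|_{L^2}$ plus $\|g\,\mathcal{D}^\theta_\xi e^{it\xi_1|\xi|^a}\|_{L^2}$, and then one must bound $\mathcal{D}^\theta_\xi e^{it\xi_1|\xi|^a}$ pointwise (or in the appropriate mixed norm) by something like $\langle t\rangle^\theta$ times a homogeneous symbol of degree $a\theta$; this in turn follows from the pointwise inequality $|\mathcal{D}^\theta_\xi e^{i\Phi(\xi)}|\lesssim |\Phi|^{\theta}+\text{(derivative terms)}$ together with $|\xi_1|\xi|^a|\lesssim |\xi|^{1+a}$ and the homogeneity of the phase. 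Iterating the integer derivatives and then applying this fractional estimate once, and summing over all the ways the derivatives distribute between the exponential and $\widehat f$, yields \eqref{linereq1} for $0<r<a+1+\frac d2$; the upper restriction $r<a+1+\frac d2$ is precisely the threshold below which the worst term, all $r$ "derivatives" hitting the exponential, still lands in $L^2$ because $a\theta$-homogeneous singularities at $\xi=0$ with $\theta$ close to $1+\frac{1}{a}\cdot\frac d2$... more precisely, the term $|t|^r |\nabla(\xi_1|\xi|^a)|^r \widehat f$ needs the symbol $|\xi|^{(a-1)r}$ to be locally $L^2$ near $0$, i.e. $2(a-1)r + d > 0$ is automatic for $a\geq 1$, while for $a<1$ one needs $r< \frac{d}{2(1-a)}$; reconciling this with the stated range $r<a+1+\frac d2$ is one of the delicate points and I would track the exact exponents carefully there. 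Actually the genuine obstruction is the singularity of $\partial_\xi^2(\xi_1|\xi|^a)$ and higher, which behaves like $|\xi|^{a-\ell}$ and forces the moment conditions.

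For part (ii), the point is that when $r\geq a+1+\frac d2$ the "bad" low-frequency term $t^r(\text{singular symbol})\widehat f$ is no longer in $L^2$ near $\xi=0$ unless $\widehat f$ vanishes to sufficient order there; and $\widehat f$ vanishing to order $m$ at $\xi=0$ is exactly the condition $\int x^\beta f\,dx=0$ for $|\beta|\le m-1$ (Taylor expansion of $\widehat f$ at the origin). So I would repeat the argument of part (i), but now at each low-frequency estimate use $|\widehat f(\xi)|\lesssim |\xi|^m \|\langle x\rangle^m f\|$ (or its fractional refinement via $\mathcal{D}^\theta_\xi$) to absorb the extra $m$ negative powers of $|\xi|$ coming from $\partial_\xi^{m+1}$ and higher hitting the phase, thereby extending the range of admissible $r$ by $m$. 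The main obstacle I anticipate is not any single inequality but the combinatorial/inductive organization: controlling, uniformly in how the $\approx r$ derivatives split among the $\ell$ exponential-hits and the fractional-hit and the surviving $\widehat f$, that every resulting term has (a) $\xi$-regularity cost $\le ar$ handled by $J^{ar}f$, (b) low-frequency singularity handled either automatically (when $r$ is small) or by the $m$ vanishing moments (when $r$ is large), and (c) time-growth exactly $\langle t\rangle^r$. Getting the three budgets to balance simultaneously at the stated endpoints $a+m+\frac d2\le r< a+1+m+\frac d2$ is where the real work lies, and I would handle it by an induction on $m$ (and a nested induction on the integer part of $r$), reducing the weight by one unit at each step and invoking the previously established case.
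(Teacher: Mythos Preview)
Your plan is essentially the paper's own approach: pass to the Fourier side, write $r=r_1+r_2$ with $r_1\in\mathbb{Z}^+\cup\{0\}$ and $r_2\in[0,1)$, expand $\partial^{\beta_1}(e^{it\xi_1|\xi|^a})\partial^{\beta_2}\widehat f$ by Leibniz, handle the fractional part via the Stein derivative $\mathcal{D}^{r_2}$ with the pointwise Leibniz rule \eqref{Leibaniso} and the phase estimate $\mathcal{D}^{r_2}(e^{it\xi_1|\xi|^a})\lesssim \langle t\rangle^{r_2}\langle\xi\rangle^{ar_2}$ (Lemma~\ref{decaylinearexp}), and for part (ii) use Taylor expansion of $\widehat f$ at the origin together with the moment conditions to cancel the low-frequency singularity.

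One point where your exponent tracking is off and worth correcting before you write it out: the worst low-frequency term is \emph{not} $|t|^r|\nabla(\xi_1|\xi|^a)|^r\sim |t|^r|\xi|^{ar}$ (that one is harmless), but rather the $l=1$ term in the Fa\`a di Bruno expansion. The paper records the exact identity
\[
\partial^{\beta}(e^{it\xi_1|\xi|^a})=|\xi|^{-2|\beta|}\sum_{l=1}^{|\beta|}t^l|\xi|^{al}P_{|\beta|+l}(\xi)\,e^{it\xi_1|\xi|^a},
\]
so the most singular symbol at $\xi=0$ is $t\,|\xi|^{a+1-|\beta|}$; after applying $\mathcal{D}^{r_2}$ this is what forces $a+1-r_1-r_2>-\tfrac d2$, i.e.\ $r<a+1+\tfrac d2$, and explains cleanly why each extra moment buys exactly one more unit of $r$. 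The paper then handles the $\mathcal{D}^{r_2}$ of these homogeneous symbols via a separate pointwise lemma (Lemma~\ref{lemmafracderivpolyno}) rather than a generic chain rule; you will likely need an analogue of that, together with Hardy--Littlewood--Sobolev near the origin and the interpolation $\|J^{\theta a}(\langle x\rangle^{(1-\theta)b}f)\|_{L^2}\lesssim\|\langle x\rangle^b f\|_{L^2}^{1-\theta}\|J^a f\|_{L^2}^\theta$ to close the bookkeeping.
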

Concerning the proof of Theorem \ref{LWPweights}, the idea to get parts (ii) and (iii) is to use an iterative argument based on the size of the weight $r>0$, in which we use the integral formulation of \eqref{HBO-ZK}  in conjunction with the linear estimates in Lemma \ref{linearestilemma}. It is worth noticing that such a scheme is also feasible due to Lemma \ref{lemmadecaynonlinear} and Corollary \ref{corollarynonlin} in which we deduce some key nonlinear estimates in weighted spaces.  However, the procedure just described requires deducing some initial polynomial decay properties for solutions of fKdV, which are needed to justify the integral formulation of \eqref{HBO-ZK} in some weighted spaces. Thus, we obtain a first persistence result in Theorem \ref{LWPweights} (i), where we use a different strategy based on energy methods and commutator estimates. We remark that such an approach has been used in the works of Fonseca, Linares, and Ponce \cite{FonPO,FLinaPonceWeBO}. In contrast to these works, our results in Theorem \ref{LWPweights} (i) deal with any spatial dimension $d\geq 1$, and dispersion of any order $0<a<2$, which apply to previous known results in the literature, and they are more involved in the analysis carried out on the dispersion in fKdV. Furthermore, comparing the proof of Theorem \ref{LWPweights} part (i) with parts (ii) and (iii), we observe that  Lemma \ref{linearestilemma} simplifies the deduction of the persistence results, and in a way, it is observed that the linear part of the equation seems to have more influence in the spatial behavior of solutions of fKdV.

One may ask whether the condition \eqref{linearEstimcomp1} is technical, or if it is a consequence of the dispersive effects of the linear group $U(t)$ in weighted spaces. The following lemma shows that \eqref{linearEstimcomp1} is necessary to have $U(t_j)f\in L^2(|x|^{2r}\, dx)$, $r\geq a+m+\frac{d}{2}$, at two different times $t_1$, $t_2$, $m\in \mathbb{Z}^{+}$. Consequently, we establish that condition \eqref{linearEstimcomp1} in Lemma \ref{linearestilemma} is sharp.

\begin{lemma}\label{unicontlemma}
Let $d\geq 1$ integer, $0<a<2$, $m\geq 1$ integer. Assume $f\in H^{a(a+m+\frac{d}{2})}(\mathbb{R}^d)\cap L^2 (|x|^{2({a+m+\frac{d}{2}})}\, dx)$. If there exists $t\neq 0$, such that
\begin{equation}\label{uniquecontlemma1}
U(t)f\in L^2(|x|^{2(a+m+\frac{d}{2})}\, dx),
\end{equation}
then \eqref{linearEstimcomp1} must hold true for all $|\beta|\leq m-1$. 
\end{lemma}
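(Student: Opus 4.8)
The plan is to argue by contradiction using the dispersive smoothing contained in the group symbol $e^{it\xi_1|\xi|^a}$, exactly as in the proof of Lemma \ref{linearestilemma}. Suppose $f\in H^{a(a+m+\frac d2)}\cap L^2(|x|^{2(a+m+\frac d2)}\,dx)$ and that $U(t)f\in L^2(|x|^{2(a+m+\frac d2)}\,dx)$ for some $t\ne 0$; we want to conclude $\int x^\beta f\,dx=0$ for all $|\beta|\le m-1$. On the Fourier side, $\langle x\rangle^{a+m+\frac d2}f\in L^2$ means $\widehat f\in H^{a+m+\frac d2}_\xi$, and the hypothesis \eqref{uniquecontlemma1} means $e^{it\xi_1|\xi|^a}\widehat f(\xi)\in H^{a+m+\frac d2}_\xi$ as well. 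First I would reduce to an integer-plus-fractional statement: write $r=a+m+\frac d2$ and split $H^r_\xi$-membership into membership of all derivatives $\partial_\xi^\gamma(\cdot)$ with $|\gamma|\le m$ in the fractional space $H^{a+\frac d2}_\xi$ (or rather $\dot H^{\sigma}$ for the leftover $\sigma=a+\frac d2-$ something; the exact bookkeeping follows the one already done in Lemma \ref{linearestilemma}). The point is that applying $\partial_\xi^\gamma$ to $e^{it\xi_1|\xi|^a}\widehat f$ via Leibniz produces, among other terms, $(\partial_\xi^\gamma e^{it\xi_1|\xi|^a})\,\widehat f(\xi) = e^{it\xi_1|\xi|^a}P_\gamma(t,\xi)\widehat f(\xi)$ where $P_\gamma$ is a sum of products of derivatives of the phase $t\xi_1|\xi|^a$; these derivatives are homogeneous and, crucially, the top-order one behaves like $|\xi|^{a-1}$ near $\xi=0$ up to smooth factors and sign functions, so $P_\gamma$ has a genuine singularity at the origin of order roughly $|\xi|^{a-|\gamma|}$ (or with logarithms) when differentiated enough.

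The heart of the matter is a local-near-the-origin analysis. Since $f$ and $U(t)f$ both lie in $H^r$ with $r>\frac d2$ (indeed $r\ge a+1+\frac d2>\frac d2$), by Sobolev embedding their Fourier transforms are continuous, and in fact $\widehat f\in C^{m}$ near $\xi=0$ with $\partial^\gamma_\xi\widehat f$ continuous for $|\gamma|\le m$; note $\partial_\xi^\beta\widehat f(0)$ is, up to a nonzero constant, exactly $\int x^\beta f(x)\,dx$, so the claim \eqref{linearEstimcomp1} is equivalent to $\partial_\xi^\beta\widehat f(0)=0$ for $|\beta|\le m-1$. I would run an induction on $|\beta|$ from $0$ upward. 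Suppose, for contradiction, that $\beta_0$ is a multi-index of minimal length $\ell\le m-1$ with $c:=\partial^{\beta_0}_\xi\widehat f(0)\ne 0$. Then I differentiate $e^{it\xi_1|\xi|^a}\widehat f$ exactly $m$ times, choosing the derivatives so that $m-\ell$ of them hit the exponential (producing the most singular factor $\sim t^{m-\ell}|\xi|^{(a-1)(m-\ell)}\cdot(\text{angular factor})$, modulo lower-order phase terms) and $\ell$ of them hit $\widehat f$, leaving $\partial^{\beta_0}\widehat f(\xi)\to c\ne0$. By the minimality of $\ell$, all the competing terms in the Leibniz expansion either (a) have fewer derivatives on the exponential, hence a weaker singularity, or (b) carry a factor $\partial^{\beta'}\widehat f(\xi)$ with $|\beta'|<\ell$, which vanishes at the origin and so — being continuous — is $o(1)$ there, i.e. decays faster than the main term's growth rate would allow cancellation. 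One then shows that the resulting $m$-th order $\xi$-derivative of $e^{it\xi_1|\xi|^a}\widehat f$ fails to be in $L^2$ near the origin: its modulus is $\gtrsim |\xi|^{(a-1)(m-\ell)}$ on a positive-measure cone, and $(a-1)(m-\ell)$ can be negative enough — after also accounting for the fractional $H^{a+\frac d2}$ requirement on this derivative, which effectively demands another $a+\frac d2$ derivatives of integrability — that $\int_{|\xi|<1}|\xi|^{2(a-1)(m-\ell)+2(a+\frac d2-\text{stuff})}\,d\xi=\infty$ in dimension $d$. The precise exponent count is dictated by the choice $r=a+m+\frac d2$, which is exactly the threshold at which, for the minimal offending $\ell=m-1$ (or the worst allowed $\ell$), the integral diverges; this is why the statement is sharp. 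Comparing with the genuinely-smoother term $\partial^\gamma_\xi(U(t)f)^{\widehat{}}$ coming from the \emph{assumed} regularity of $U(t)f$, which \emph{is} in $L^2$, yields the contradiction.

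The main obstacle I expect is twofold. First, the phase $\xi_1|\xi|^a$ has a mixed singularity at $\xi=0$ (it is $C^1$ but not $C^2$ when $a<1$, and even for $1<a<2$ its higher derivatives blow up), and its derivatives involve $\xi_1/|\xi|$-type angular factors and sign functions that do not vanish on full cones; one must be careful that the "most singular" term genuinely dominates on a set of positive measure rather than being killed by an angular zero, and that terms with intermediate numbers of exponential-derivatives really are subordinate — this requires a clean multi-index Leibniz bookkeeping, possibly most transparent in polar-type coordinates adapted to the $\xi_1$ direction. Second, the fractional part: one is not just asking for the $m$-th $\xi$-derivative to be in $L^2$ but in $H^{a+\frac d2}$ (fractional), so the divergence argument must be phrased in terms of, e.g., a Gagliardo-type seminorm or a Littlewood–Paley decomposition in $\xi$ near the origin, and the threshold exponent $r=a+m+\frac d2$ comes precisely from balancing $(1-a)(m-\ell)$ against $a+\frac d2$ plus the ambient dimension $d$ in the volume element. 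I would handle the fractional bookkeeping by the same device used in Lemma \ref{linearestilemma} — interpolation or an explicit pseudodifferential/multiplier estimate for $\mathcal D^\theta_\xi$ — so that the contradiction reduces to a single scalar inequality on exponents, and then verify that inequality is violated exactly when a moment $\int x^\beta f\,dx$, $|\beta|\le m-1$, is nonzero.
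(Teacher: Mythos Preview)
Your overall strategy matches the paper's: pass to the Fourier side, expand $\partial^\gamma(e^{it\xi_1|\xi|^a}\widehat f)$ by Leibniz, isolate the most singular contribution at $\xi=0$, and show it fails the required fractional regularity unless the moments vanish; the induction on the minimal nonvanishing moment is structurally the same as the paper's induction on $m$. But two concrete errors would derail the argument as written. First, the singularity exponent is wrong: by \eqref{lineqeq1} one has $\partial^{\beta_1}(e^{it\xi_1|\xi|^a})=e^{it\xi_1|\xi|^a}|\xi|^{-2|\beta_1|}\sum_{l=1}^{|\beta_1|}t^l|\xi|^{al}P_{|\beta_1|+l}(\xi)$, and the dominant term near the origin is the $l=1$ piece of order $t\,|\xi|^{a+1-|\beta_1|}$, \emph{not} $t^{|\beta_1|}|\xi|^{(a-1)|\beta_1|}$ (note $\partial_{\xi_j}(\xi_1|\xi|^a)\sim|\xi|^a$, not $|\xi|^{a-1}$). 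The sharp threshold $r=a+m+\tfrac d2$ arises exactly from the condition $(a+1-|\beta_1|)-r_2=-\tfrac d2$ in Claim \ref{claimL2integr}, so a wrong exponent yields a wrong threshold. Second, your split of $r$ into ``$m$ integer derivatives plus fractional order $a+\tfrac d2$'' is unworkable whenever $a+\tfrac d2\ge1$ (in particular for all $d\ge2$), since the Stein-derivative tool $\mathcal D^{b}$ requires $b\in(0,1)$. The paper writes $r=r_1+r_2$ with $r_1=\lfloor r\rfloor$ and $r_2\in[0,1)$, which forces $|\beta_1|=r_1-(m-1)$ derivatives on the exponential rather than your $m-\ell$.

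Your handling of the competing Leibniz terms is also incomplete. Terms with more derivatives on the exponential and fewer on $\widehat f$ carry a \emph{worse} singularity (each extra $\xi$-derivative on the prefactor drops the exponent by one), and the bare statement $\partial^{\beta'}\widehat f(\xi)=o(1)$ does not compensate; one needs the quantitative Taylor vanishing $O(|\xi|^{\ell-|\beta'|})$ and then a genuine argument that the resulting terms are controlled. The paper sidesteps this by invoking the positive direction (the proof of Lemma \ref{linearestilemma} (ii)) together with the induction hypothesis to show that all Leibniz terms with $|\beta_2|\ge m$, with $|\beta_2|\le m-2$, or with derivatives landing on the cutoff $\phi$, already lie in $H^{r_2}$; for the remaining case $|\beta_2|=m-1$ it writes $\partial^{\beta_2}\widehat f(\xi)=\partial^{\beta_2}\widehat f(0)+O(|\xi|)$ and shows the $O(|\xi|)$ remainder and the phase correction $e^{it\xi_1|\xi|^a}-1$ are harmless, reducing the question to whether $t\,|\xi|^{-2|\beta_1|+a}P_{|\beta_1|+1}(\xi)\phi(\xi)\,\partial^{\beta_2}\widehat f(0)\in\dot H^{r_2}$. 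The divergence is then established by an explicit lower bound on $\mathcal D^{r_2}\big(|\xi|^{-l_1}P_{l_2}\phi\big)$ on cones where the homogeneous polynomial $P_{l_2}$ is bounded away from zero (Claim \ref{claimL2integr}) --- precisely the step you flag as the main obstacle but do not carry out.
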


Lemma \ref{unicontlemma} and the arguments in the proof of Theorem \ref{LWPweights} (ii) and (iii) imply the following asymptotic at infinity unique continuation principle for solutions of \eqref{HBO-ZK}.

\begin{theorem}\label{theoremtwotimes}
Let $d\geq 1$ be integer, $k\geq 2$ be integer, $0<a<2$, $s\geq\{(\frac{d}{2}+1)^{+},a(a+1+\frac{d}{2})+1\}$ with $s\in (0,s_{d,k,2})\cup (s_{d,k,1},\infty)$, and $\nu \in \{-1,1\}$. Let $u\in C([0,T];H^s(\mathbb{R}^d)\cap L^2(|x|^{2(a+1+\frac{d}{2})^{-}}))$ be a solution of \eqref{HBO-ZK}. If there exists two different times $t_1, t_2\in [0,T]$ such that
\begin{equation}\label{assumpttwotimes}
    u(\cdot,t_1), u(\cdot,t_2)\in L^2(|x|^{2(a+1+\frac{d}{2})}\, dx)
\end{equation}
then
\begin{equation*}
    \int_{\mathbb{R}^d} u_0(x)\, dx=0.
\end{equation*}
\end{theorem}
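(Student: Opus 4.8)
The plan is to reduce Theorem~\ref{theoremtwotimes} (the case $m=1$) to an application of the unique continuation Lemma~\ref{unicontlemma} for the linear group, by way of the integral (Duhamel) formulation of \eqref{HBO-ZK}. First I would use the integral equation
\begin{equation*}
u(t_2)=U(t_2-t_1)u(t_1)-\nu\int_{t_1}^{t_2}U(t_2-t')\,\big(u^{k-1}(t')\partial_{x_1}u(t')\big)\,dt',
\end{equation*}
and rewrite it as $U(t_1-t_2)u(t_2)=u(t_1)-\nu\int_{t_1}^{t_2}U(t_1-t')\,\partial_{x_1}\big(\tfrac1k u^k(t')\big)\,dt'$. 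The strategy is: the hypothesis \eqref{assumpttwotimes} gives $u(t_1),u(t_2)\in L^2(|x|^{2(a+1+\frac d2)}\,dx)$; if we can show the Duhamel (nonlinear) term lies in $L^2(|x|^{2(a+1+\frac d2)}\,dx)$, then $U(t_1-t_2)u(t_2)$ differs from something in that weighted space by a term in that weighted space, hence $U(t_1-t_2)u(t_2)\in L^2(|x|^{2(a+1+\frac d2)}\,dx)$ while also $u(t_2)$ itself is in that space; applying Lemma~\ref{unicontlemma} with $m=1$ to $f=u(t_2)$ and time $t=t_1-t_2\neq 0$ forces $\int_{\mathbb R^d}u(x,t_2)\,dx=0$, and since $I_1[u]$ is conserved (the first conservation law in \eqref{ConservationlawsHBO-ZK}, valid because $\partial_{x_1}(\cdots)$ integrates to zero) this yields $\int u_0=0$.

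The heart of the argument is therefore controlling the nonlinear term in the weight $|x|^{2(a+1+\frac d2)}$, i.e.\ at a weight of size just below $a+2+\tfrac d2$. For this I would combine the linear estimate \eqref{linereq1} of Lemma~\ref{linearestilemma}(i) (which applies for weights $r<a+1+\tfrac d2$, covering $r=(a+1+\tfrac d2)^-$) with the nonlinear weighted estimates alluded to in the excerpt — Lemma~\ref{lemmadecaynonlinear} and Corollary~\ref{corollarynonlin} — applied to $\partial_{x_1}(u^k)$. Concretely, one wants a bound of the form $\|\langle x\rangle^{r}\,\partial_{x_1}(u^k(t'))\|_{L^2}\lesssim \|u(t')\|_{H^s}^{k-1}\big(\|J^{ar}u(t')\|_{L^2}+\|\langle x\rangle^{r}u(t')\|_{L^2}\big)$, using the algebra property of $H^s$ for $s>\tfrac d2$ and the fact that $\langle x\rangle^r$ of a product distributes onto one factor; the regularity hypothesis $s\ge\{(\tfrac d2+1)^+, a(a+1+\tfrac d2)+1\}$ with the gap condition $s\in(0,s_{d,k,2})\cup(s_{d,k,1},\infty)$ is precisely what makes these estimates close (the quadratic-in-$s$ numbers $s_{d,k,1},s_{d,k,2}$ arise from balancing the loss $ar$ derivatives against the $\tfrac d2$-type Sobolev embedding loss in the nonlinear term). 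Since $u\in C([0,T];H^s\cap L^2(|x|^{2(a+1+\tfrac d2)^-}))$ by hypothesis, the right-hand side is bounded on $[t_1,t_2]$, and integrating in $t'$ over the finite interval keeps the Duhamel term in $L^2(|x|^{2(a+1+\frac d2)^-}\,dx)$.

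The one delicate point — and the main obstacle I anticipate — is the passage from the open weight $(a+1+\tfrac d2)^-$, in which the solution is assumed to persist and in which the linear estimate \eqref{linereq1} is available, to the \emph{closed/critical} weight $a+1+\tfrac d2$ appearing in \eqref{assumpttwotimes} and needed to invoke Lemma~\ref{unicontlemma}. The resolution is that \emph{given} the a priori information $u(t_1),u(t_2)\in L^2(|x|^{2(a+1+\frac d2)}\,dx)$ at the two endpoint times, one only needs the nonlinear Duhamel term to be controlled at the weight $a+1+\tfrac d2$; and at that weight Lemma~\ref{linearestilemma}(i) still applies since $r=a+1+\tfrac d2<a+1+\tfrac d2$ is false, so instead one must run the nonlinear estimate directly at weight $r=a+1+\tfrac d2$ using only the $\langle x\rangle^{r}u(t_i)$ control at the endpoints together with the interior $(a+1+\tfrac d2)^-$ bound — i.e.\ a bootstrap/limiting argument letting the exponent increase to $a+1+\tfrac d2$, exactly the iterative scheme described after Theorem~\ref{LWPweights}. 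Once the Duhamel term is placed in $L^2(|x|^{2(a+1+\frac d2)}\,dx)$, the rest (invoking Lemma~\ref{unicontlemma}, then conservation of $I_1$) is immediate.
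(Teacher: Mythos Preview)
Your overall strategy is right --- use Duhamel, place the nonlinear integral in the critical weighted space $L^2(|x|^{2(a+1+\frac d2)}\,dx)$, then invoke Lemma~\ref{unicontlemma} with $m=1$ --- and this is exactly what the paper does. But the ``delicate point'' you flag is resolved by a mechanism you overlook, and your proposed bootstrap/limiting workaround is neither needed nor clearly viable.

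The key observation you are missing is that $u^{k-1}\partial_{x_1}u=\tfrac1k\partial_{x_1}(u^k)$ is an $x_1$-derivative, so its Fourier transform vanishes at the origin: $\widehat{u^{k-1}\partial_{x_1}u}(0,t)=0$ for all $t$. This is precisely condition~\eqref{linearEstimcomp1} with $m=1$, which means Lemma~\ref{linearestilemma}(ii) --- not part~(i) --- applies to the nonlinear term at the critical weight $r=a+1+\tfrac d2$. Combined with Corollary~\ref{corollarynonlin} (which, from the hypothesis $u\in C([0,T];L^2(|x|^{2(a+1+\frac d2)^-}\,dx))$ and the condition $s\in(0,s_{d,k,2})\cup(s_{d,k,1},\infty)$, gives $\partial_{x_1}(u^k)\in L^1((0,T);L^2(|x|^{2r_1}\,dx))$ for some $r_1\geq a+1+\tfrac d2$) and the regularity $s\geq a(a+1+\tfrac d2)+1$ (so that $\partial_{x_1}(u^k)\in L^1((0,T);H^{a(a+1+\frac d2)})$), Lemma~\ref{linearestilemma}(ii) places the Duhamel integral directly in $L^2(|x|^{2(a+1+\frac d2)}\,dx)$ with no limiting procedure. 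Your proposed limit $r\uparrow a+1+\tfrac d2$ in part~(i) would not work cleanly: the weighted $L^2$ spaces do not nest well in that limit, and there is no uniform control on the implicit constant. Once you see that the zero-mean structure of the nonlinearity unlocks part~(ii), the argument is immediate; the paper then applies Lemma~\ref{unicontlemma} to $u_0$ (after translating $t_1=0$), which is equivalent to your route via $u(t_2)$ and conservation of $I_1$.
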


When $k\geq 2$ is an even integer number in \eqref{HBO-ZK}, we can deduce some further asymptotic at infinity unique continuation principles, which yields the maximum $L^2$-spatial decay for solutions of \eqref{HBO-ZK}.

\begin{theorem}\label{theoremthreetimes}
Let $d\geq 1$ be integer, $k\geq 2$ be an even integer, $0<a<2$, $s\geq\{(\frac{d}{2}+1)^{+},a(a+2+\frac{d}{2})+1\}$ with $s\in (0,s_{d,k,2})\cup (s_{d,k,1},\infty)$, and $\nu\in\{1,-1\}$. Let $u\in C([0,T];H^s(\mathbb{R}^d)\cap L^2(|x|^{2(a+2+\frac{d}{2})^{-}}\,dx))$ be a solution of \eqref{HBO-ZK} with $\int_{\mathbb{R}^d} u_0(x)\, dx=0$.
\begin{itemize}
    \item[(i)] If there exist three different times $t_1, t_2, t_3\in [0,T]$ such that
\begin{equation}\label{assumpthreetimes}
    u(\cdot,t_1), u(\cdot,t_2), u(\cdot,t_3)\in L^2(|x|^{2(a+2+\frac{d}{2})}\, dx),
\end{equation}
then $u\equiv 0$.
\item[(ii)] If there exist two times such that
\begin{equation}\label{twotimes}
    u(\cdot,t_1), u(\cdot,t_2)\in L^2(|x|^{2(a+2+\frac{d}{2})}\, dx),
\end{equation}
and
\begin{equation}\label{extrahyph}
     \int_{\mathbb{R}^d} x_1u(x,t_1)\, dx=0, \hspace{1cm} \text{ or } \hspace{1cm} \int_{\mathbb{R}^d} x_1u(x,t_2)\, dx=0.  
    \end{equation}
Then $u\equiv 0$.
\end{itemize}
\end{theorem}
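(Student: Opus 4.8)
The plan is to combine the Duhamel formula with a refined expansion at $\xi=0$ in the spirit of Lemma \ref{unicontlemma}, exploiting that, $k$ being even, $\rho(t):=\int_{\mathbb{R}^d}u^k(x,t)\,dx$ is a \emph{nonnegative} quantity. Write the nonlinearity as $\nu u^{k-1}\partial_{x_1}u=\frac{\nu}{k}\partial_{x_1}(u^k)$, put $r^\ast:=a+2+\tfrac{d}{2}$ (so that $r^\ast=a+m+\tfrac{d}{2}$ with $m=2$), and set $\mu_\ell(t):=\int_{\mathbb{R}^d}x_\ell u(x,t)\,dx$. Since $u$ is real and $k$ is even, $\rho\ge 0$, and if $\rho(t_0)=0$ for some $t_0$ then $u(\cdot,t_0)\equiv 0$, hence $u\equiv 0$ by uniqueness (Theorem \ref{LWPweights}); so we may assume $\rho>0$ on $[0,T]$. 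Using $\widehat{D^au}(0,t)=0$ and $\widehat{\partial_{x_1}(u^k)}(0,t)=0$ one obtains $\frac{d}{dt}\int u\,dx=0$ (so $\int u(\cdot,t)\,dx=\int u_0\,dx=0$), $\frac{d}{dt}\mu_\ell(t)=0$ for $\ell\ne 1$, and $\frac{d}{dt}\mu_1(t)=\frac{\nu}{k}\rho(t)$; in particular $s\mapsto\mu_1(s)$ is continuous and monotone on $[0,T]$. All these integrals converge because $u\in C([0,T];H^s\cap L^2(|x|^{2(a+2+\frac{d}{2})^-}))$ and $a+2+\tfrac{d}{2}>\tfrac{d}{2}+1$, and the Duhamel formula is valid in these spaces by the construction behind Theorem \ref{LWPweights}.

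\textbf{Key step.} I will show that if $0\le t'<t''\le T$ and $u(\cdot,t'),u(\cdot,t'')\in L^2(|x|^{2r^\ast}\,dx)$ then $\int_{t'}^{t''}\mu_1(s)\,ds=0$. Fix a Schwartz function $Q$ with $\widehat Q(\xi)=i\xi_1\varphi(\xi)$, $\varphi\in C_c^\infty$, $\varphi\equiv 1$ near $0$, so that $\int Q\,dx=0$, $\int x_1Q\,dx=-1$ and $\int x_\ell Q\,dx=0$ for $\ell\ne 1$. Split $\partial_{x_1}(u^k(\tau))=\rho(\tau)Q+R(\tau)$; using $\widehat{\partial_{x_1}(u^k)}(0)=0$ and $\partial_{\xi_1}\widehat{\partial_{x_1}(u^k)}(0)=i\,\widehat{u^k}(0)=i\rho(\tau)$, one checks that $\int x^\beta R(\tau)\,dx=0$ for all $|\beta|\le 1$. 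By Corollary \ref{corollarynonlin}, Lemma \ref{lemmadecaynonlinear} and $s\ge a(a+2+\tfrac{d}{2})+1$ --- here the point is that the multilinear structure of $u^{k-1}\partial_{x_1}u$ lets one distribute the weight $r^\ast$ over $k\ge 2$ factors, so each factor only carries the \emph{subcritical} weight $r^\ast/k$ --- one gets $\sup_{[t',t'']}\big(\|J^{ar^\ast}R(\tau)\|_{L^2}+\|\langle x\rangle^{r^\ast}R(\tau)\|_{L^2}\big)<\infty$; hence, by Lemma \ref{linearestilemma}(ii) with $m=2$, $B:=-\frac{\nu}{k}\int_{t'}^{t''}U(t''-\tau)R(\tau)\,d\tau\in L^2(|x|^{2r^\ast})$. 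From the Duhamel formula between $t'$ and $t''$,
\begin{equation*}
G:=u(\cdot,t'')-B=U(t''-t')u(\cdot,t')-\frac{\nu}{k}\int_{t'}^{t''}\rho(\tau)\,U(t''-\tau)Q\,d\tau\in L^2(|x|^{2r^\ast}),
\end{equation*}
equivalently $\widehat G\in H^{r^\ast}(\mathbb{R}^d_\xi)$. Away from $\xi=0$ everything is as regular as needed (cf. Lemma \ref{linearestilemma}), so the only obstruction is near $\xi=0$. Inserting $e^{is\xi_1|\xi|^a}=1+is\,\xi_1|\xi|^a+O(|\xi|^{2(1+a)})$, $\widehat{u(\cdot,t')}(\xi)=\nabla\widehat{u(\cdot,t')}(0)\cdot\xi+O(|\xi|^2)$ (the order-$0$ term vanishes since $\int u(\cdot,t')=0$) and $\varphi=1+O(|\xi|)$, and --- exactly as in the proof of Lemma \ref{unicontlemma} --- verifying that every remaining contribution is locally in $H^{r^\ast}$, one is left with
\begin{equation*}
\widehat G(\xi)\equiv i(t''-t')\sum_{\ell=1}^d\partial_{\xi_\ell}\widehat{u(\cdot,t')}(0)\,\xi_1\xi_\ell|\xi|^a+\frac{\nu}{k}\Big(\int_{t'}^{t''}\rho(\tau)(t''-\tau)\,d\tau\Big)\xi_1^2|\xi|^a\pmod{H^{r^\ast}_{\mathrm{loc}}}.
\end{equation*}
Any nonzero combination $\xi_1(c\cdot\xi)|\xi|^a$ is homogeneous of degree $2+a$ and therefore lies in $H^{\sigma}_{\mathrm{loc}}$ precisely for $\sigma<(2+a)+\tfrac{d}{2}=r^\ast$; hence every coefficient above must vanish. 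Those of $\xi_1\xi_\ell|\xi|^a$ with $\ell\ne 1$ give $\mu_\ell(t')=0$; that of $\xi_1^2|\xi|^a$, together with $\partial_{\xi_1}\widehat{u(\cdot,t')}(0)=-i\mu_1(t')$, gives $(t''-t')\mu_1(t')+\frac{\nu}{k}\int_{t'}^{t''}\rho(\tau)(t''-\tau)\,d\tau=0$. Since $\mu_1(s)=\mu_1(t')+\frac{\nu}{k}\int_{t'}^s\rho$, Fubini converts this into $\int_{t'}^{t''}\mu_1(s)\,ds=0$, proving the claim.

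\textbf{Conclusion.} For part (i), the Key Step applied to $(t_1,t_2)$ and to $(t_2,t_3)$ yields $\int_{t_1}^{t_2}\mu_1=\int_{t_2}^{t_3}\mu_1=0$. Since $\mu_1$ is continuous and monotone, $\int_{t_1}^{t_2}\mu_1=0$ forces $\mu_1(t_1),\mu_1(t_2)$ to lie on opposite (weak) sides of $0$, and likewise $\mu_1(t_2),\mu_1(t_3)$; hence $\mu_1(t_2)=0$, and then $\mu_1$ --- monotone, of one sign on $[t_1,t_2]$, with zero integral there --- is identically $0$ on $[t_1,t_2]$, so $\rho\equiv 0$ on $[t_1,t_2]$, so $u(\cdot,\tau)\equiv 0$ there ($k$ even), so $u\equiv 0$. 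For part (ii), the Key Step on $(t_1,t_2)$ gives $\int_{t_1}^{t_2}\mu_1=0$; the extra hypothesis $\mu_1(t_1)=0$ (or $\mu_1(t_2)=0$) forces $\mu_1$ to keep a single sign on $[t_1,t_2]$, hence again $\mu_1\equiv 0$ there, and $u\equiv 0$ as before.

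\textbf{Expected main obstacle.} The crux is the Key Step, and within it the local analysis at $\xi=0$ at the \emph{endpoint} exponent $r^\ast=a+2+\tfrac{d}{2}$: one must (a) guarantee, via the nonlinear weighted estimates, that $B$ genuinely belongs to $L^2(|x|^{2r^\ast})$ and not merely to $L^2(|x|^{2(r^\ast)^-})$ --- otherwise a degree-$(2+a)$ obstruction could be hidden in it --- and (b) single out exactly the homogeneous profiles of degree $2+a$ that obstruct membership in $H^{r^\ast}_{\mathrm{loc}}$ and read off their coefficients. This is the same delicate bookkeeping that underpins Lemma \ref{unicontlemma}, now performed in the presence of the source term $\rho(\tau)Q$; the monotonicity argument that finishes the proof is then elementary.
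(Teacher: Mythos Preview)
Your proof is correct and follows essentially the same route as the paper: both derive the first-moment identity $\mu_1'(t)=\tfrac{\nu}{k}\rho(t)$, both reduce the decay assumption at two times $t'<t''$ to the single scalar condition $\int_{t'}^{t''}\mu_1(s)\,ds=0$ by isolating the degree-$(2+a)$ homogeneous obstruction at $\xi=0$ (exactly the mechanism of Lemma~\ref{unicontlemma}), and both finish using that $\rho\ge 0$ when $k$ is even. The only cosmetic differences are (a) you package the Key Step by explicitly subtracting a Schwartz corrector $\rho(\tau)Q$ so that the remainder $R(\tau)$ has vanishing moments of order $\le 1$ and Lemma~\ref{linearestilemma}(ii) applies directly, whereas the paper works with the raw Duhamel term and reads off the same obstruction coefficients via the computation behind Lemma~\ref{unicontlemma}; and (b) in the endgame you exploit the monotonicity of $\mu_1$ to force $\mu_1(t_2)=0$ and then $\mu_1\equiv 0$ on $[t_1,t_2]$, while the paper uses the mean-value theorem to locate zeros $\widetilde t_1\in(t_1,t_2)$, $\widetilde t_2\in(t_2,t_3)$ of $\mu_1$ and then integrates $\mu_1'$ between them to get $\int_{\widetilde t_1}^{\widetilde t_2}\rho=0$. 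Both variants are equally valid and equally short.
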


\begin{corollary}\label{sharpthree}
Let $d\geq 1$ be integer, $k\geq 2$ be integer, $0<a<2$, $s\geq\{(\frac{d}{2}+1)^{+},a(a+2+\frac{d}{2})+1\}$ with $s\in (0,s_{d,k,2})\cup (s_{d,k,1},\infty)$, and $\nu \in\{1,-1\}$. Let $u_0\in H^s(\mathbb{R}^d)\cap L^2(|x|^{2(a+2+\frac{d}{2})}\, dx)$, with $\int_{\mathbb{R}^d} u_0(x)\, dx=0$ and $\int_{\mathbb{R}^d} x_j u_0(x)\, dx=0$ for all $j=2,\dots,d$. If $k$ is even, we will also assume $\int_{\mathbb{R}^d}x_1 u_0(x)\neq 0$. Let $u\in C([0,T];H^s(\mathbb{R}^d)\cap L^2(|x|^{2(a+2+\frac{d}{2})^{-}}\,dx))$ be the solution of \eqref{HBO-ZK} with initial condition $u_0$.

Assume that there exists $t^{\ast}_k\in (0,T)$ such that
\begin{equation*}
\begin{aligned}
t^{\ast}_k\int_{\mathbb{R}^d}x_1 u_0(x)\, dx=-\frac{\nu }{k}\int_0^{t_k^{\ast}}\int_{\mathbb{R}^d}(t_k^{\ast}-\tau)(u(x,\tau))^k \, dx d\tau.
\end{aligned}  
\end{equation*}
In particular, when $k=2$, the $L^2$-conservation law implies 
\begin{equation*}
    t^{\ast}_{2}=\frac{-4}{\nu \|u_0\|_{L^2}^2}\int_{\mathbb{R}^d} x_1 u_0(x)\, dx.
\end{equation*}
Then $u(\cdot,t^{\ast}_{k})\in L^{2}(|x|^{2(a+2+\frac{d}{2})}\, dx)$.
\end{corollary}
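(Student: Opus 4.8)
The plan is to exploit the fact that, for a solution $u$ of \eqref{HBO-ZK} with zero mean, the quantity $\int_{\mathbb{R}^d} x_1 u(x,t)\, dx$ evolves linearly plus a controlled nonlinear correction, and to show that at the carefully chosen time $t^{\ast}_k$ this first moment vanishes; Theorem \ref{theoremtwotimes}-type reasoning then shows the solution lies at the threshold weight $a+2+\frac{d}{2}$ with an extra vanishing condition, so one more application of the linear estimates (Lemma \ref{linearestilemma}) pushes the weight across the threshold. Concretely, I would first derive the identity governing $t\mapsto \int x_1 u(x,t)\,dx$: multiplying the equation by $x_1$ and integrating, the dispersive term $\partial_{x_1} D^a u$ contributes (after integration by parts and using $\widehat{D^a u}(0)=0$, which follows from the zero-mean hypothesis) a term proportional to $\int u\,dx=0$ — actually one should be careful, it contributes a term like $c\,\widehat{D^a u}$ evaluated suitably, but the point is that the zero-mean condition kills the net linear drift except for a term linear in $t$ coming from the constant mode; and the nonlinearity $\nu u^{k-1}\partial_{x_1}u = \frac{\nu}{k}\partial_{x_1}(u^k)$ contributes $-\frac{\nu}{k}\int_{\mathbb{R}^d} u^k\,dx$. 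Integrating twice in time (the second integration accounting for the $t$-linear part of the first moment) yields exactly the Duhamel-type identity
\begin{equation*}
\int_{\mathbb{R}^d} x_1 u(x,t)\,dx = \int_{\mathbb{R}^d} x_1 u_0(x)\,dx + \frac{\nu}{k}\int_0^t \int_{\mathbb{R}^d}(t-\tau)(u(x,\tau))^k\,dx\,d\tau + (\text{linear-in-}t \text{ terms from dispersion}),
\end{equation*}
and one checks that the dispersive linear-in-$t$ terms vanish thanks to $\int u_0=0$ and $\int x_j u_0 = 0$ for $j=2,\dots,d$ — this last condition is precisely what is needed to kill the cross terms $\xi_j$ appearing when one differentiates the symbol $\xi_1|\xi|^a$ in $\xi_1$. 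Thus the hypothesis $t^{\ast}_k\int x_1 u_0 = -\frac{\nu}{k}\int_0^{t^{\ast}_k}\int (t^{\ast}_k-\tau)u^k\,dx\,d\tau$ is exactly the statement that $\int_{\mathbb{R}^d} x_1 u(x,t^{\ast}_k)\,dx = 0$.

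Second, I would justify that all these manipulations are legitimate: the solution is assumed in $C([0,T];H^s\cap L^2(|x|^{2(a+2+d/2)^{-}}\,dx))$, and since $a+2+\frac{d}{2} > 1$, the weight $|x|^{(a+2+d/2)^-}$ dominates $|x|$ and $x^\beta$ for $|\beta|\le 1$ with room to spare, so $x_1 u(\cdot,t)\in L^2$ (in fact $L^1$, after pairing with the $L^2$-decay and using $s>d/2$) and the first moment is a well-defined $C^1$ (indeed $C^2$) function of $t$; the nonlinear term $u^k\in L^1$ because $u\in H^s \hookrightarrow L^\infty\cap L^2$ with $k\ge 2$ and the weighted control. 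For $k=2$ the explicit value $t^{\ast}_2 = -4(\nu\|u_0\|_{L^2}^2)^{-1}\int x_1 u_0$ follows by plugging $u^2$'s integral $=\|u(\tau)\|_{L^2}^2 = \|u_0\|_{L^2}^2$ (the $L^2$ conservation law $I_2$) into $\frac{\nu}{2}\int_0^{t}(t-\tau)\|u_0\|_{L^2}^2\,d\tau = \frac{\nu}{4}t^2\|u_0\|_{L^2}^2$ and solving $t\int x_1 u_0 = -\frac{\nu}{4}t^2\|u_0\|_{L^2}^2$.

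Third, having established $\int_{\mathbb{R}^d} x_1 u(x,t^{\ast}_k)\,dx = 0$, together with $\int u_0 = 0$ (hence $\int u(\cdot,t)\,dx=0$ for all $t$ by conservation of $I_1$, and $\int x_j u(\cdot, t^{\ast}_k)=0$ for $j\ge 2$ — these again follow from tracking the moment evolution and the vanishing hypotheses on $u_0$), the solution at time $t^{\ast}_k$ satisfies all the vanishing-moment conditions \eqref{linearEstimcomp1} with $m=2$. The idea is then to run the iterative scheme behind Theorem \ref{LWPweights}(ii)--(iii): write $u(\cdot,t^{\ast}_k) = U(t^{\ast}_k - t_0)u(\cdot,t_0) - \frac{\nu}{k}\int_{t_0}^{t^{\ast}_k} U(t^{\ast}_k-\tau)\partial_{x_1}(u^k)(\tau)\,d\tau$ for $t_0$ close to $t^{\ast}_k$, apply Lemma \ref{linearestilemma}(ii) with $r = a+2+\frac{d}{2}$ and $m=2$ to the linear evolution (whose initial datum $u(\cdot,t_0)\in L^2(|x|^{2(a+2+d/2)^-})$ already satisfies, by continuity of the moments, the required vanishing conditions up to order $1$ at $t^{\ast}_k$; one argues the conditions hold in the limit), and control the Duhamel integrand via Lemma \ref{lemmadecaynonlinear}/Corollary \ref{corollarynonlin} together with the well-posedness bounds on $u$ in the intermediate weighted spaces. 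The main obstacle I anticipate is precisely this last step: verifying that the vanishing-moment conditions needed to apply Lemma \ref{linearestilemma}(ii) are available not just at $t^{\ast}_k$ but along the whole Duhamel representation, and that the nonlinear term $\partial_{x_1}(u^k)$ — which a priori need not have vanishing moments — still produces an output in $L^2(|x|^{2(a+2+d/2)})$ after the smoothing/cancellation from $U(t^{\ast}_k-\tau)$; this is where the even-ness of $k$ and the structure of the nonlinear estimates in weighted spaces must be used delicately, and where one must be careful that no logarithmic or borderline divergence appears at the exact weight $r = a+2+\frac{d}{2}$ rather than $r = (a+2+\frac{d}{2})^-$.
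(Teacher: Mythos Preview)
Your proposal contains a genuine error in the first step that propagates through the rest. The first-moment identity reads
\[
\frac{d}{dt}\int_{\mathbb{R}^d} x_1 u(x,t)\,dx \;=\; \frac{\nu}{k}\int_{\mathbb{R}^d} (u(x,t))^k\,dx,
\]
with \emph{no} contribution from the dispersive term (this is the identity \eqref{unicontident1} in the proof of Theorem~\ref{theoremthreetimes}). Integrating \emph{once} gives
\[
\int_{\mathbb{R}^d} x_1 u(x,t)\,dx \;=\; \int_{\mathbb{R}^d} x_1 u_0\,dx + \frac{\nu}{k}\int_0^t\!\!\int_{\mathbb{R}^d} u^k\,dx\,d\tau,
\]
which has no $(t-\tau)$ weight. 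The hypothesis of the corollary, which does carry the weight $(t^{\ast}_k-\tau)$ and the prefactor $t^{\ast}_k$, is \emph{not} the statement $\int x_1 u(\cdot,t^{\ast}_k)=0$; it is equivalent (integrate the identity above once more in $t$) to
\[
\int_0^{t^{\ast}_k}\!\!\int_{\mathbb{R}^d} x_1 u(x,\tau)\,dx\,d\tau \;=\; 0.
\]
You can check this against the explicit $k=2$ formula: your interpretation would give $t^{\ast}_2=-\tfrac{2}{\nu\|u_0\|_{L^2}^2}\int x_1 u_0$, not the stated $-\tfrac{4}{\nu\|u_0\|_{L^2}^2}\int x_1 u_0$.

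Because of this, your third step is aimed at the wrong target. The paper's route is much shorter: in the proof of Theorem~\ref{theoremthreetimes} one shows, via the Duhamel formula and the Fourier-side obstruction analysis of Lemma~\ref{unicontlemma} and Claim~\ref{claimL2integr}, that $u(\cdot,t)\in L^{2}(|x|^{2(a+2+\frac{d}{2})}\,dx)$ holds \emph{if and only if}
\[
\int_0^{t}\!\!\int_{\mathbb{R}^d} x_1 u(x,\tau)\,dx\,d\tau=0
\qquad\text{and}\qquad
\int_{\mathbb{R}^d} x_j u_0(x)\,dx=0 \ \ (j=2,\dots,d).
\]
The corollary's assumptions are exactly these two conditions at $t=t^{\ast}_k$, so the conclusion follows immediately. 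There is no need to verify vanishing moments of $u(\cdot,t^{\ast}_k)$ itself, nor to run Lemma~\ref{linearestilemma}(ii) from a nearby time $t_0$ (which, as you note, would be delicate since $u(\cdot,t_0)$ only lies in the $(a+2+\tfrac{d}{2})^{-}$ weighted space and so does not satisfy the hypotheses of that lemma at the endpoint weight).
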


When $k\geq 2$ is an odd integer number, we cannot use the arguments in the proof of Theorem \ref{theoremthreetimes} to find the maximum decay of solutions of \eqref{HBO-ZK}. Thus, the following result shows some persistence in weighted spaces when $k$ is an odd number. 

\begin{corollary}\label{oddcasetheorem}
Let $d\geq 1$ be integer, $k\geq 2$ be an odd integer, $0<a<2$, and $\nu\in\{1,-1\}$. Consider $m\geq 2$ be integer, $a+m+\frac{d}{2}\leq r<a+1+m+\frac{d}{2}$. Let $s\geq\{(\frac{d}{2}+1)^{+},ar+1\}$ with $s\in (0,s_{d,k,2})\cup (s_{d,k,1},\infty)$. Let $u_0\in H^s(\mathbb{R}^d)\cap L^{2}(|x|^{2r}\, dx)$ such that
\begin{equation}\label{inicondzeroprop}
    \int_{\mathbb{R}^d} x^{\beta}u_0(x)\, dx=0,
\end{equation}
for all multi-index $|\beta|\leq m-1$. Let $u\in C([0,T];H^s(\mathbb{R}^d))$ be the solution of \eqref{HBO-ZK} with initial condition $u_0$. If
    \begin{equation*}
    \int_{\mathbb{R}^d} x^{\beta}(u(x,t))^k\, dx=0
\end{equation*}
for all $t\in[0,T]$, and $|\beta|\leq m-2$, then it follows
\begin{equation*}
    u\in L^{\infty}([0,T];L^2(|x|^{2r}\,dx))).
\end{equation*}

\end{corollary}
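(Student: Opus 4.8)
The plan is to proceed by an induction on the size of the weight exponent, exactly mirroring the bootstrapping scheme used for Theorem~\ref{LWPweights}(ii)--(iii), but now keeping track of the vanishing moment conditions all the way up to order $m$. First I would recall that, since $u_0\in H^s(\mathbb{R}^d)$ with $s>\frac{d}{2}+1$ and $s\in(0,s_{d,k,2})\cup(s_{d,k,1},\infty)$, Theorem~\ref{LWPweights} already produces a solution $u\in C([0,T];H^s(\mathbb{R}^d))$, and the real content is the spatial decay. The starting point of the induction is the case $r_0\in[a+1+\frac{d}{2},a+2+\frac{d}{2})$: because $\int u_0\,dx=0$ (the $|\beta|=0$ instance of \eqref{inicondzeroprop}), the argument behind Theorem~\ref{LWPweights}(iii) — i.e.\ writing the Duhamel formula $u(t)=U(t)u_0-\nu\int_0^t U(t-\tau)\big(u^{k-1}\partial_{x_1}u\big)(\tau)\,d\tau$, applying Lemma~\ref{linearestilemma}(i)--(ii) to the linear term, and controlling the Duhamel term with the nonlinear weighted estimates of Lemma~\ref{lemmadecaynonlinear} and Corollary~\ref{corollarynonlin} — gives $u\in L^\infty([0,T];L^2(|x|^{2r_0}\,dx))$.

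Next I would run the inductive step. Suppose $u\in L^\infty([0,T];L^2(|x|^{2r_j}\,dx))$ for $r_j\in[a+j+\frac{d}{2},a+1+j+\frac{d}{2})$ with $1\le j\le m-1$, and suppose we have already verified the moment identities $\int x^\beta u(x,t)\,dx=0$ for all $t\in[0,T]$ and $|\beta|\le j-1$. To push to the next weight window $r_{j+1}\in[a+j+1+\frac{d}{2},a+2+j+\frac{d}{2})$ I need, by Lemma~\ref{linearestilemma}(ii), to know that $\int x^\beta u(x,t)\,dx=0$ now holds for all $|\beta|\le j$. This is where the hypothesis $\int x^\beta (u(x,t))^k\,dx=0$ for $|\beta|\le m-2$ is used: differentiating $\int x^\beta u(x,t)\,dx$ in time against the equation $\partial_t u=\partial_{x_1}D^a u-\frac{\nu}{k}\partial_{x_1}(u^k)$, integrating by parts, the contribution of $\partial_{x_1}(u^k)$ produces a term proportional to $\int x^{\beta-e_1}(u(x,t))^k\,dx$ which vanishes by assumption (note $|\beta-e_1|\le j-1\le m-2$), while the dispersive term $\int x^\beta \partial_{x_1}D^a u\,dx$ is handled as in the proof of Lemma~\ref{unicontlemma} and the arguments for Theorem~\ref{theoremtwotimes}: it reduces, via the Fourier-side identity $\widehat{x^\beta u}=(i\partial_\xi)^\beta\widehat{u}$ evaluated at $\xi=0$, to lower-order moments which vanish by the inductive hypothesis, plus a term that is identically zero because $i\xi_1|\xi|^a$ vanishes to sufficiently high order at the origin. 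Hence $\frac{d}{dt}\int x^\beta u(x,t)\,dx=0$, and since \eqref{inicondzeroprop} gives the initial value $0$ for $|\beta|\le m-1\ge j$, we get $\int x^\beta u(x,t)\,dx=0$ for all $t$ and $|\beta|\le j$. With these vanishing moments in hand, I apply Lemma~\ref{linearestilemma}(ii) to the linear term $U(t)u_0$ at weight $r_{j+1}$ and again close the Duhamel estimate with Lemma~\ref{lemmadecaynonlinear} and Corollary~\ref{corollarynonlin}, obtaining $u\in L^\infty([0,T];L^2(|x|^{2r_{j+1}}\,dx))$. Iterating finitely many times reaches the target $r\in[a+m+\frac{d}{2},a+1+m+\frac{d}{2})$.

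There are two technical points I would be careful about. The first is the justification of the moment computations and of the Duhamel identity in the weighted space before one knows the decay: this should be dealt with exactly as in the proof of Theorem~\ref{LWPweights}, by working first with the truncated/regularized weights $\langle x\rangle_N$ and passing to the limit, or by first establishing a slightly smaller decay and then upgrading — the same device that lets one make sense of $\int x^\beta u\,dx$ as a genuinely differentiable function of $t$. The second, which I expect to be the main obstacle, is verifying carefully that at each stage the dispersive contribution $\int x^\beta \partial_{x_1}D^a u(x,t)\,dx$ vanishes: unlike the KdV/ZK case where the symbol is a polynomial, here $|\xi|^a$ is only $C^{\lfloor a\rfloor}$ at the origin, so one must check that the order of vanishing of the symbol $\xi_1|\xi|^a$ at $\xi=0$ together with the already-established lower-order moment conditions is enough to kill this term for every $|\beta|\le j$ in the relevant range $j\le m-1$; this is precisely the mechanism isolated in Lemma~\ref{unicontlemma}, and the bookkeeping of which derivatives of $\widehat{u^k}$ at the origin survive is the delicate part. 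Once that is settled, the rest is a routine finite iteration of the already-available linear and nonlinear weighted estimates.
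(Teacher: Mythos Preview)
Your overall scheme is in the right spirit, but you are taking an unnecessary detour, and the ``main obstacle'' you worry about is one that the paper's proof never encounters.

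The key observation you are missing is this: to run the Duhamel argument with Lemma~\ref{linearestilemma}(ii), you need the vanishing-moment conditions \eqref{linearEstimcomp1} for two pieces of data only --- for $u_0$ (to control $U(t)u_0$) and for the nonlinearity $u^{k-1}\partial_{x_1}u$ at each fixed time $\tau$ (to control $U(t-\tau)\big(u^{k-1}\partial_{x_1}u\big)(\tau)$ inside the integral). You do \emph{not} need to know that $\int x^{\beta}u(x,t)\,dx=0$ for all $t$. The moments of $u_0$ are given by hypothesis \eqref{inicondzeroprop}. For the nonlinear term, a single integration by parts gives
\[
\int_{\mathbb{R}^d} x^{\beta}\,u^{k-1}\partial_{x_1}u\,dx
=\frac{1}{k}\int_{\mathbb{R}^d} x^{\beta}\,\partial_{x_1}(u^k)\,dx
=-\frac{1}{k}\int_{\mathbb{R}^d} (\partial_{x_1}x^{\beta})\,u^k\,dx,
\]
and $\partial_{x_1}x^{\beta}$ is either $0$ or a constant times $x^{\widetilde{\beta}}$ with $|\widetilde{\beta}|=|\beta|-1\le m-2$; hence this vanishes directly by the assumed hypothesis $\int x^{\widetilde{\beta}}(u(x,t))^k\,dx=0$ for $|\widetilde{\beta}|\le m-2$. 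With both moment conditions in hand, the iterative weight-bootstrap of Theorem~\ref{LWPweights}(ii)--(iii) (Corollary~\ref{corollarynonlin} to gain weight on $\partial_{x_1}(u^k)$, then Lemma~\ref{linearestilemma}(ii)) runs verbatim up to the target $r$. That is the whole proof in the paper.

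Your proposed route --- differentiating $\int x^{\beta}u(x,t)\,dx$ in time and arguing that the dispersive contribution $\int x^{\beta}\partial_{x_1}D^{a}u\,dx$ vanishes --- is not needed, and the concern you raise about it is real: for $|\beta|\ge 2$ and $0<a<2$ non-integer, the symbol $\xi_1|\xi|^{a}$ is not sufficiently smooth at the origin for this quantity to be handled cleanly by a pointwise Fourier-side computation, so making your argument rigorous would require additional work (essentially re-proving pieces of Lemma~\ref{unicontlemma}). The paper's approach sidesteps this entirely by never touching the time evolution of the moments of $u$.
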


We note that the unique continuation principles in Theorems \ref{theoremtwotimes} and \ref{theoremthreetimes} are strongly influenced by the linear part of the equation and the condition \eqref{linearEstimcomp1} in Lemma \ref{unicontlemma}, which ultimately determine the maximum polynomial decay allowed by the nonlinear equation fKdV. In this regard, our results in Lemma \ref{theoremtwotimes} establish that for arbitrary initial data such that $\int u_0(x)\, dx \neq 0$, the decay $|x|^{(\frac{d}{2}+1+a)^{-}}$ is the maximum possible for solutions of the IVP \eqref{HBO-ZK}. In other words, if $u_0\in H^s(\mathbb{R}^d)\cap L^2(|x|^{2(\frac{d}{2}+1+a)}\, dx)$ with $s>0$ large enough (as in Theorem \ref{LWPweights}), and $\int u_0(x)\, dx \neq 0$, then the corresponding solution of \eqref{HBO-ZK} generated from this initial condition satisfies $|x|^{(\frac{d}{2}+1+a )^{-}}u\in L^{\infty}([0,T];L^2(\mathbb{R}^d))$, but there does not exist a non-trivial solution $u$ with initial data $u_0$ such that $| x|^{(\frac{d}{2}+1+a)}u\in L^{\infty}([0,T_1];L^2(\mathbb{R}^d))$ for some $T_1$.

On the other hand, from Lemma \ref{unicontlemma}, we formally expect that to increase the order of the polynomial weight imposed on the initial condition, it would be required to further incorporate condition \eqref{linearEstimcomp1} in the study of the nonlinear equation. However, this is not the case when $k\geq 2$ is an even integer as certain symmetries (see Remark (c) below) of the nonlinear equation deduced in the proof of Theorem \ref{theoremthreetimes} show that $r=\big(\frac{d}{2}+2+a\big)^{-}$ is the largest possible $L^2$-polynomial spatial decay rate for solutions of \eqref{HBO-ZK}. More precisely, if $u_0\in H^s(\mathbb{R}^d)\cap L^2(|x|^{2(\frac{d}{2}+2+a)}\, dx)$, $u_0\neq 0$ with $s>0$ large enough, and $\int u_0(x)\, dx = 0$, then the corresponding solution $u$ of \eqref{HBO-ZK} with $k\geq 2$ even integer generated from $u_0$ satisfies $|x|^{(\frac{d}{2}+2+a )^{-}}u\in L^{\infty}([0,T];L^2(\mathbb{R}^d))$, but there does not exist a non-trivial solution $u$ with initial data $u_0$ such that $|x|^{(\frac{d}{2}+2+a)}u\in L^{\infty}([0,T_2];L^2(\mathbb{R}^d))$ for some $T_2$. However, our proof of Theorem \ref{theoremthreetimes} is not conclusive for the case $k\geq 2$ odd number as we can not confirm the maximum polynomial decay for solutions of \eqref{HBO-ZK} in this case. Nevertheless, we have deduced Corollary \ref{oddcasetheorem} in which we state some persistence properties in weighted spaces of arbitrary size $r\geq 0$ for solutions of \eqref{HBO-ZK} with $k\geq 2$ odd.

We also remark that Corollary \ref{sharpthree} shows that the three times condition in Theorem \ref{theoremthreetimes} is optimal and it cannot be reduced to two times, which is consistent with the result for the $d=1$ model in \cite[Theorem 2]{FLinaPonceWeBO}, \cite[Theorem 1.5]{FLinaPioncedGBO}, and \cite[Theorem 1.7]{Riano2021}.

Since our results mainly depend on the properties of the linear equation in Lemmas \ref{linearestilemma}, \ref{unicontlemma}, we can extend some of the previous theorems to the IVP \eqref{CombHBO-ZK}. 
\begin{corollary}\label{corollaryCombinedfKdV}
Let $d\geq 1$, $n\geq 2$ be integers.
\begin{itemize}[leftmargin=20pt]
    \item[(i)] Let $k_j \geq 2$ be integer, $\nu_j\in\{1,-1\}$, for all $j=1,\dots, n$. Then the Cauchy problem \eqref{CombHBO-ZK} is locally well-posed in the following spaces:
    \begin{itemize}
        \item[(i.a)] $H^s(\mathbb{R}^d)\cap L^2(|x|^{2r}\, dx)$, with $0<r<1$, $s>\frac{d}{2}+1$.
        \item[(i.b)] $H^s(\mathbb{R}^d)\cap L^2(|x|^{2r}\, dx)$ with $1\leq r<a+1+\frac{d}{2}$, $s\geq \{(\frac{d}{2}+1)^{+},ar+1\}$, $s>\max_{1\leq j \leq n}\{s_{d,k_j,1}\}$.       
        \item[(i.c)] $H^s(\mathbb{R}^d)\cap L^2(|x|^{2r}\, dx)\cap\{f\in H^s(\mathbb{R}^d): \int_{\mathbb{R}^d}f(x)\, dx=0\}$ with $a+1+\frac{d}{2}\leq r<a+2+\frac{d}{2}$, $s\geq \{(\frac{d}{2}+1)^{+},ar+1\}$, $s>\max_{1\leq j \leq n}\{s_{d,k_j,1}\}$ (with $s_{d,k_k,1}$ given by \eqref{defis1}).
    \end{itemize}
    \item[(ii)] Let $k_j \geq 2$ be integer, $\nu_j\in\{1,-1\}$, for all $j=1,\dots, n$. Let $s\geq\{(\frac{d}{2}+1)^{+},a(a+1+\frac{d}{2})+1\}$ with $s>\max_{1\leq j \leq n}\{s_{d,k_j,1}\}$. Then the two times unique continuation principle in Theorem \ref{theoremtwotimes} holds for the Cauchy problem \eqref{CombHBO-ZK}. 
    \item[(iii)] Assume that $k_j \geq 2$ is even for all $j=1,\dots, n$, and $\nu_j=\nu_{j'}$ for all $j,j'=1,\dots n$. Let $s\geq\{(\frac{d}{2}+1)^{+},a(a+2+\frac{d}{2})+1\}$ with $s>\max_{1\leq j \leq n}\{s_{d,k_j,1}\}$. Then the conclusions of Theorem \ref{theoremthreetimes} hold for the Cauchy problem \eqref{CombHBO-ZK}.  
    \item[(iv)] Let $k_j \geq 2$ be integer, $\nu_j\in\{1,-1\}$, $j=1,\dots n$. Let $s\geq\{(\frac{d}{2}+1)^{+},a(a+2+\frac{d}{2})+1\}$ with $s>\max_{1\leq j \leq n}\{s_{d,k_j,1}\}$.  Let $u_0\in H^s(\mathbb{R}^d)\cap L^2(|x|^{2(a+2+\frac{d}{2})}\, dx)$, with $\int_{\mathbb{R}^d} u_0(x)\, dx=0$ and $\int_{\mathbb{R}^d} x_j u_0(x)\, dx=0$ for all $j=2,\dots,d$. If all the $k_j$ are even and $\nu_j=\nu_{j'}$, for all $j,j'=1,\dots,n$, we will also assume $\int_{\mathbb{R}^d}x_1 u_0(x)\neq 0$. Then the same conclusion in Corollary \eqref{sharpthree} holds for the Cauchy problem \eqref{CombHBO-ZK}, but in this case $t^{\ast}\in(0,T)$ is defined by
    \begin{equation*}
\begin{aligned}
t^{\ast}\int_{\mathbb{R}^d}x_1 u_0(x)\, dx=-\sum_{j=1}^n\frac{\nu_j }{k_j}\int_0^{t^{\ast}}\int_{\mathbb{R}^d}(t^{\ast}-\tau)(u(x,\tau))^{k_j} \, dx d\tau.
\end{aligned}  
\end{equation*}
\item[(v)]Let $k_j\geq 2$, $j=1,\dots, n$ be different integers where either there exist some indices $k_j$ even and $k_{j'}$ odd, or $\nu_j\neq \nu_{j'}$ for some $j\neq j'$. Let $m\geq 2$ be integer, $a+m+\frac{d}{2}\leq r<a+1+m+\frac{d}{2}$. Let $s\geq\{(\frac{d}{2}+1)^{+},ar+1\}$ with $s>\max_{1\leq j \leq n}\{s_{d,k_j,1}\}$. Then the results of Corollary \ref{oddcasetheorem} are valid for the Cauchy problem \eqref{CombHBO-ZK} provided that $u_0\in H^s(\mathbb{R}^d)\cap L^2(|x|^{2r}\, dx)$ satisfies \eqref{inicondzeroprop}, and the solution $u$ of \eqref{CombHBO-ZK} with initial condition $u_0$ satisfies 
    \begin{equation*}
    \int_{\mathbb{R}^d} x^{\beta}(u(x,t))^{k_j}\, dx=0
\end{equation*}
for all $t\in[0,T]$, $|\beta|\leq m-2$ and $j=1,\dots,n$.
\end{itemize}    
\end{corollary}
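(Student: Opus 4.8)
The plan is to exploit that the IVP \eqref{CombHBO-ZK} has exactly the same linear part as \eqref{HBO-ZK} --- namely $\partial_t u-\partial_{x_1}D^a u=0$ with propagator $U(t)$ --- so that Lemmas \ref{linearestilemma} and \ref{unicontlemma} apply without any change; only the nonlinearity is replaced, from $\frac{\nu}{k}\partial_{x_1}(u^k)$ to $\sum_{j=1}^n\tfrac{\nu_j}{k_j}\partial_{x_1}(u^{k_j})$. I would therefore revisit the proofs of Theorems \ref{LWPweights}, \ref{theoremtwotimes}, \ref{theoremthreetimes} and of Corollaries \ref{sharpthree}, \ref{oddcasetheorem}, and check that each survives this replacement term by term.

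For part (i) I would rerun the scheme of Theorem \ref{LWPweights}. In the Duhamel formula $u(t)=U(t)u_0-\int_0^t U(t-\tau)\big(\sum_{j}\tfrac{\nu_j}{k_j}\partial_{x_1}(u^{k_j})(\tau)\big)\,d\tau$, every summand $u^{k_j-1}\partial_{x_1}u$ obeys exactly the nonlinear estimates of Lemma \ref{lemmadecaynonlinear} and Corollary \ref{corollarynonlin}, the relevant Sobolev threshold for the $j$-th term being $s>s_{d,k_j,1}$; imposing $s>\max_{1\le j\le n}\{s_{d,k_j,1}\}$ makes them all hold simultaneously (this is why in (i.b)--(i.c) the admissible regularity is stated through the single condition on $s$), and then the contraction for $0<r<1$, or the iteration in the weight $r$ for $r\ge 1$ together with the moment constraints $\int u_0=0$ (resp. $\int x^\beta u_0=0$) for the larger weights, closes exactly as before. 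The $L^2$-energy and commutator argument underlying Theorem \ref{LWPweights}(i) is likewise additive over $j$, so part (i.a) follows the same way.

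For the unique continuation statements (ii)--(v) the content is a handful of algebraic identities obtained by pairing the equation with monomials $x^\beta$. For (ii) I would reproduce the proof of Theorem \ref{theoremtwotimes}: writing $u(t_2)$ through Duhamel from time $t_1$, the nonlinear contribution is still a pure $\partial_{x_1}$-derivative of an $L^1\cap L^2(|x|^{2r})$ function with sufficient decay, so Lemma \ref{unicontlemma} with $m=1$ forces $\int u(\cdot,t_1)\,dx=0$, and conservation of $I_1$ then gives $\int u_0\,dx=0$. For (iii) and (iv) the decisive --- and I expect the most delicate --- point is the \emph{sign} of $\sum_{j=1}^n\tfrac{\nu_j}{k_j}(u(x,\tau))^{k_j}$: the three-times mechanism of Theorem \ref{theoremthreetimes} hinges on strict monotonicity (or convexity) in $t$ of a quantity of the form $t\int x_1 u_0\,dx+\sum_j\tfrac{\nu_j}{k_j}\int_0^t\!\!\int(t-\tau)(u(x,\tau))^{k_j}\,dx\,d\tau$, which requires the integrand $\sum_j\tfrac{\nu_j}{k_j}u^{k_j}$ to be pointwise sign-definite; this holds precisely when all $k_j$ are even and all $\nu_j$ agree --- exactly the hypotheses assumed in (iii)--(iv). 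Granting that, the three-times/two-times-with-moment dichotomy carries over verbatim, and $t^\ast$ in (iv) is read off from the modified identity displayed in the statement (and from $I_2$ when every $k_j=2$). Finally, for (v), where mixed parities or signs obstruct deriving the needed moment relations, I would import them as hypotheses: assuming $\int x^\beta (u(\cdot,t))^{k_j}\,dx=0$ for $|\beta|\le m-2$ and all $j$ guarantees that $\sum_j\tfrac{\nu_j}{k_j}\partial_{x_1}(u^{k_j})$ satisfies \eqref{linearEstimcomp1} at the required order, so the weight-iteration built on Lemma \ref{linearestilemma}(ii) --- as in the proof of Corollary \ref{oddcasetheorem} --- delivers $u\in L^\infty([0,T];L^2(|x|^{2r}\,dx))$.
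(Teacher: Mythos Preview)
Your proposal is correct and follows essentially the same approach as the paper: both exploit that the linear propagator $U(t)$ is unchanged, so Lemmas \ref{linearestilemma} and \ref{unicontlemma} apply verbatim, and then treat the combined nonlinearity $\sum_j\tfrac{\nu_j}{k_j}\partial_{x_1}(u^{k_j})$ term by term via Corollary \ref{corollarynonlin} and the energy/commutator estimates, with the first-moment identity replaced by $\frac{d}{dt}\int x_1 u\,dx=\sum_j\tfrac{\nu_j}{k_j}\int u^{k_j}\,dx$. Your convexity framing in (iii) is just a repackaging of the paper's mechanism (the paper extracts two zeros of $t\mapsto\int x_1 u(\cdot,t)\,dx$ via the mean value theorem and then uses sign-definiteness of $\sum_j\tfrac{\nu_j}{k_j}u^{k_j}$), so there is no genuine difference in route.
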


{\bf Remarks.} (a) The flexibility of our results concerning the dimension allows us to recover known results in the literature while extending them naturally to several variable settings. For example, setting $d=1$ in Theorems \ref{LWPweights}, \ref{theoremtwotimes} and \ref{theoremthreetimes}, we recover the maximum spatial decay properties of the Benjamin-Ono equation ($d=1$, $a=1$ in \eqref{HBO-ZK}) established in \cite[Theorems 1, 2 and 3, Remark (e)]{FonPO}, the results for the dispersion generalized Benjamin-Ono equation ($d=1$, $1\leq a<2$ in \eqref{HBO-ZK}) in \cite[Theorems 1.1., 1.2, and 1.3, equation (1.18)]{FLinaPioncedGBO}, and the results for the $d=1$, $0<a<1$ fractional KdV equation \eqref{HBO-ZK} in \cite[Theorems 1.1, 1.3, 1.4, and 1.6, see also Remark 1.8 (c)]{Riano2021}. In higher dimensions $d\geq 2$, we recover the results for the Cauchy problem associated to \eqref{gBO} in \cite[Theorems 1.1, 1.2, and 1.3]{OscarWHBO}.
\\ \\
(b) This article aims to study the maximum decay $r$ propagated by solutions of the equation fKdV in terms of the dimension $d\geq 1$, and the dispersion $0<a<2$.  One question still unanswered is establishing the minimum regularity in the Sobolev scale $s$ required to propagate polynomial decay of order $0<r<\frac{d}{2}+a+2$. One may conjecture that the balance between decay and regularity for fKdV must satisfy $s\geq ar$, i.e., this should be the minimal condition to obtain local well-posedness in the space $H^s(\mathbb{R}^d)\cap L^2(|x|^{2r}\, dx)$. Notice that the condition $s\geq ar$ can be motivated from the requirements between $s$ and $r$ in the case of the linear equation of fKdV in Lemmas \ref{linearestilemma} and \ref{unicontlemma}. Even though our results in Theorems \ref{LWPweights}, \ref{theoremtwotimes} and \ref{theoremthreetimes} are not intend to obtain minimal regularity $s$, they are optimized to the techniques developed in this manuscript, and the condition $s>\frac{d}{2}+1$ can certainly be improved. We also notice that the assumption $s\in (0,s_{d,k,2})\cup(s_{d,k,1},\infty)$ is completely technical. Although, the condition $s>\frac{d}{2}+1$ eliminates the possibility that $s\in(0,s_{d,k,2})$, to show the extension of validity of our arguments towards a lower regularity local theory for solution of fKdV, we have kept the full range $s\in (0,s_{d,k,2})\cup(s_{d,k,1},\infty)$ in the hypothesis of our results. 

As a matter of fact, our results can be extended to regularities $s\leq \frac{d} {2}+1$ with $s\geq ar$ in the case of Theorem \ref{LWPweights} (i), and with the addition of the conditions $s\geq ar+1$ with $s\in (0,s_{d,k,2})\cup(s_{d,k,1},\infty)$ in Theorem \ref{LWPweights} (ii) and (iii), provided that there exists a local theory in $H^s(\mathbb{R}^d)$ for which the solutions $u$ of \eqref{HBO-ZK} also belong to the class 
\begin{equation}\label{wellposcond}
    u\in L^{k-1}((0,T);W^{1,\infty}(\mathbb{R}^d)),
\end{equation}
(see Remark \ref{RemarkNonlEstim} below). Examples of local well-posedness results where \eqref{wellposcond} holds true include: The Benjamin-Ono equation ($d=1$, $a=1$, $k=2,3$ in \eqref{HBO-ZK}), see \cite[Theorems 1.1. and 1.2]{KenigKo} (see also \cite{Ponce1991,KochT}), the dispersion generalized Benjamin-Ono equation ($d=1$, $1<a<2$, $k=2$ in \eqref{HBO-ZK}), see \cite[Theorem A]{Argenis2020}, the one dimensional fractional KdV with lower dispersion ($d=1$, $k=2$, $0<a<1$ in \eqref{HBO-ZK}), see \cite[Theorem 1.4]{LinaresPilodSaut2014}, and the Benjamin-Ono equation in higher dimensions ($d=2$, $k=2$, $a=1$), see \cite{HickmanLinaresRiano2019}.
\\ \\
(c) The proof of Theorem \ref{theoremthreetimes} depends on the following first momentum identity
\begin{equation*}
\frac{d}{dt}\int_{\mathbb{R}^d} x_1 u(x,t)\, dx=\frac{\nu}{k}\int_{\mathbb{R}^d} (u(x,t))^k\, dx. 
\end{equation*}
Under the hypothesis of Theorem \ref{theoremthreetimes}, the idea to deduce the three times unique continuation principle consists in proving that the first momentum of the solution vanishes somewhere in the time intervals $(t_1,t_2)$ and $(t_2,t_3)$. This in turn yields $\int_{\mathbb{R}^d} (u(x,\widetilde{t}))^k\, dx=0$ for some time $\widetilde{t}$. Thus, when $k\geq 2$ is even, it follows from the $L^2$-conservation law that $u\equiv 0$. This is not the case when $k\geq 2$ is odd. For this reason, Theorem \ref{theoremthreetimes} only deals with the case $k\geq 2$ even, and we have obtained some further decay properties when $k$ is odd in Corollary \ref{oddcasetheorem}. An open problem is to determine the maximum polynomial decay in $L^2$-spaces, if such exists, for solutions of \eqref{HBO-ZK} with $k\geq 2$ odd. To the best of our knowledge, this question has not been investigated for any dimension $d\geq 1$ and dispersion $0<a<2$. The solution to this problem might require different techniques than those proposed in this manuscript.
\\ \\
(d) Our studies on linear effects in weighted spaces allow us to obtain results for models with different nonlinearities. As an example, we consider the Cauchy problem \eqref{CombHBO-ZK} that corresponds to fKdV with combined nonlinearities. In this regard, Corollary \ref{corollaryCombinedfKdV} parts (i), (ii) show that $r=(\frac {d}{2}+1+a)^{-}$ is the maximum polynomial decay propagated for solutions of \eqref{CombHBO-ZK} with sufficiently regular initial condition $u_0$ such that $\int u_0(x)\, dx\neq 0$. In the case where all the nonlinearities $k_j\geq 2$ are even, and the terms $\nu_j$ are all equal, Corollary \ref{corollaryCombinedfKdV} (iii) shows that the maximum $L^2$-polynomial decay admitted by solutions of \eqref{CombHBO-ZK} is $r=(\frac{ d}{2}+2+a)^{-}$. Moreover, in this same case, the three times condition is optimal by the results of Corollary \ref{corollaryCombinedfKdV} (iv). However, when the initial condition is such that $\int u_0(x)\, dx=0$ with $u_0\neq 0$, given different integer powers $k_j\geq 2$ with $j=1,\dots,n$, $n\geq 2$, it remains to determinate the optimal $L^2$-spatial decay for solutions of \eqref{CombHBO-ZK}, where either the powers of the nonlinearities combine even and odd cases, or when $\nu_j\neq \nu_{j'}$ for some $j\neq j'$. This question is at present far from being solved as it seems to be more related to the competition between different nonlinearities, and their influence on the spatial dynamics of solutions. Nevertheless, we present some results towards the persistence of polynomial decay for arbitrary nonlinearities $k_j$ and coefficients $\nu_j$ in Corollary \ref{corollaryCombinedfKdV} (iv) and (v).
\\ \\
(e) When $d=1$, the asymptotic at infinity unique continuation principles deduced in Theorems \ref{theoremtwotimes}, \ref{theoremthreetimes}, and Corollary \ref{sharpthree} coincide with those obtained in the works of Fonseca, Linares and Ponce \cite{FLinaPioncedGBO}, and those of the second author in \cite{Riano2021}.  Since our results are valid for any dimension, in a way, they can be considered as a generalization of those previously established when $d=1$, moreover, they seem to be the first results for $d\geq 2$, see \cite{LinaresPonce2023}.  

On the other hand, Kenig, Pilod, Ponce, and Vega \cite{KenigPilodPonceVega2020,KenigPonceVega2020} recently deduced some local unique continuation principles for solution of fKdV with $d\geq 1$. For local one (see the introduction in \cite{LinaresPonce2023}), it means that if two suitable solutions $u$, $v$ of fKdV coincide in an open set $\Omega$ (space-time set), then $u\equiv v$ in the whole domain of definition of $u$ and $v$. In this regard, the results in \cite{KenigPilodPonceVega2020}  can be applied to the case of fKdV in higher dimensions, and the range of dispersions can also be extended to some negative values as well.
\begin{theorem}\label{unici}
Let $a\in (-1,\infty)-2\Z$, $k\geq 2$ integer, $\nu\in\{1,-1\}$, and $u,v$ be two real solutions of the IVP \eqref{HBO-ZK} such that
\begin{equation}\label{class}
u,v \in C([0,T];H^s(\R^d))\cap C^1([0,T];H^{s'}(\R^d)),
\end{equation}
with $s>\max\{a+1,\frac{d}{2}+1\}$ and $s'=\min\{s-(a+1),s-1\}$. Moreover, if $d=1$, $a\in (-1,-\frac{1}{2}]$ also assume that 
\begin{equation}\label{japonesuv}
|x|\p_{x}u(\cdot,t_0), |x| \p_{x}v(\cdot,t_0) \in L^2(\R).
\end{equation}
If there exists a non-empty open set $\Omega\subset \R^d$, and a time $t_0\in (0,T)$ such that 
\begin{equation}\label{omegaset}
u(x,t_0)=v(x,t_0) \quad \mbox{and} \quad \p_t u(x,t_0)=\p_t v(x,t_0) \quad \mbox{for any}  \quad x\in \Omega,
\end{equation}
then $u(x,t)=v(x,t)$, for all $(x,t)\in \R^d \times [0,T]$. 
\end{theorem}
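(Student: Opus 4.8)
The plan is to follow the Kenig--Pilod--Ponce--Vega scheme, adapted to the nonlocal dispersion $D^{a}$ with $a\notin 2\Z$: reduce to the difference $w=u-v$, use the equation \emph{at the single time} $t_0$ to convert the two Cauchy data on $\Omega$ into the hypotheses of a unique continuation lemma for $D^{a}$, conclude $w(\cdot,t_0)\equiv 0$, and finally propagate this to all of $[0,T]$ by a linear energy estimate. Concretely, set $w=u-v$ and factor the nonlinear difference,
\begin{equation*}
u^{k-1}\p_{x_1}u-v^{k-1}\p_{x_1}v=u^{k-1}\,\p_{x_1}w+\Big(\sum_{j=0}^{k-2}u^{j}v^{k-2-j}\Big)(\p_{x_1}v)\,w=:b_1\,\p_{x_1}w+b_0\,w .
\end{equation*}
Then $w$ solves the linear equation $\p_t w=\p_{x_1}D^{a}w-\nu(b_1\p_{x_1}w+b_0 w)$, where, by \eqref{class} and the embedding $H^{s}\hookrightarrow W^{1,\infty}$ (valid since $s>\tfrac d2+1$), the coefficients satisfy $b_0,b_1,\p_{x_1}b_1\in L^{\infty}([0,T]\times\R^{d})$.

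Now evaluate this identity at $t=t_0$ and restrict to $\Omega$. Since $w(\cdot,t_0)$ vanishes on the open set $\Omega$, so do all of its spatial derivatives, whence $(b_1\p_{x_1}w+b_0w)(\cdot,t_0)\equiv 0$ on $\Omega$; combined with $\p_t w(\cdot,t_0)\equiv 0$ on $\Omega$ (from \eqref{omegaset}) this forces $\p_{x_1}D^{a}w(\cdot,t_0)\equiv 0$ on $\Omega$. This is the step where nonlocality matters: for $a$ an even integer $\p_{x_1}D^{a}w$ would be a local quantity and would vanish on $\Omega$ automatically, carrying no information, whereas for $a\notin 2\Z$ it is genuine additional data. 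Because the Fourier multipliers $|\xi|^{a}$ and $i\xi_1$ commute, the function $h:=\p_{x_1}w(\cdot,t_0)\in H^{s-1}(\R^{d})$ satisfies $h|_{\Omega}=0$ and $D^{a}h|_{\Omega}=0$. At this point I would invoke the unique continuation lemma for $D^{a}$, $a\in(-1,\infty)\setminus 2\Z$, from \cite{KenigPilodPonceVega2020} (which rests, for $0<a<2$, on the Caffarelli--Silvestre extension and unique continuation for the associated degenerate elliptic equation; on the reduction $a=2m+a'$ with $0<a'<2$ when $a>2$; and on a direct Riesz potential argument when $-1<a<0$): since $h\in H^{s-1}$ with $s-1>\tfrac d2$, and, in the borderline one-dimensional range $a\in(-1,-\tfrac12]$, additionally $\langle x\rangle h\in L^{2}$ thanks to \eqref{japonesuv}, the lemma yields $h\equiv 0$ on $\R^{d}$, i.e.\ $\p_{x_1}w(\cdot,t_0)\equiv 0$. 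As $w(\cdot,t_0)\in L^{2}(\R^{d})$ and a square-integrable function independent of $x_1$ must vanish, we conclude $w(\cdot,t_0)\equiv 0$.

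It remains to propagate the vanishing at $t_0$ to the whole interval. Pairing the linear equation for $w$ with $w$ in $L^{2}$, the dispersive term contributes nothing because $i\xi_1|\xi|^{a}$ is purely imaginary ($\p_{x_1}D^{a}$ is skew-adjoint on real $L^{2}$), and an integration by parts in the $b_1$-term gives
\begin{equation*}
\Bigl|\tfrac{d}{dt}\|w(t)\|_{L^{2}}^{2}\Bigr|\le\bigl(\|\p_{x_1}b_1\|_{L^{\infty}}+2\|b_0\|_{L^{\infty}}\bigr)\|w(t)\|_{L^{2}}^{2}.
\end{equation*}
Since $w(\cdot,t_0)=0$, Gronwall's inequality applied both forward and backward in time from $t_0$ forces $w\equiv 0$ on $[0,T]$, i.e.\ $u\equiv v$. (Alternatively, once $u(\cdot,t_0)=v(\cdot,t_0)$ is known, one may simply quote the uniqueness part of the local well-posedness theory for \eqref{HBO-ZK} in the class \eqref{class}.)

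The main obstacle is the nonlocal unique continuation lemma for $D^{a}$ used above: the remaining steps are soft, but this lemma is where the nonlocal character of $D^{a}$ (for $a\notin 2\Z$) is essentially exploited, and it is precisely its hypotheses that dictate the regularity and decay assumptions in the statement --- the exponent $s'=\min\{s-(a+1),s-1\}$ is chosen so that $\p_t w$, and hence $\p_{x_1}D^{a}w(\cdot,t_0)$, is regular enough for the argument to be carried out on $\Omega$, while \eqref{japonesuv} supplies exactly the borderline $L^{2}$-weight the extension/Carleman argument requires when $d=1$ and $a\in(-1,-\tfrac12]$.
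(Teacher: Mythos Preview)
Your proposal is correct and follows precisely the Kenig--Pilod--Ponce--Vega scheme that the paper invokes (the paper does not present its own proof of this theorem, stating instead that the case $d\ge 2$ follows the same ideas as \cite[Theorem 1.1]{KenigPilodPonceVega2020}, which in turn rests on the Ghosh--Salo--Uhlmann unique continuation for the fractional Laplacian). Your reduction to $h=\partial_{x_1}w(\cdot,t_0)$, so that $h|_\Omega=0$ and $D^{a}h|_\Omega=0$, is exactly the right device to place the problem in the setting of the fractional-Laplacian unique continuation lemma, and the subsequent $L^2$ argument plus Gronwall (or well-posedness uniqueness) is the standard conclusion.
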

When $d=1$, Theorem \ref{unici} was deduced in \cite[Theorem 1.1]{KenigPilodPonceVega2020}, we also notice that the case $d=1$, $a=-1$, i.e., the Burgers-Hilbert equation, similar local unique continuation principles were obtained in \cite[Remark 1.4]{KenigPonceVega2020}. The proof of Theorem \ref{unici} when $d\geq 2$ follows the same ideas as in case $d=1$ in \cite[Theorem 1.1]{KenigPilodPonceVega2020}, which depend on the unique continuation principles for the fractional Laplacian established by Ghosh, Salo and Uhlmann \cite{GhoshSaloUhlmann2020}. For this reason, we will not present the deduction of the above theorem here. However, we remark that when $d\geq 2$, one can conjecture that, under a possible addition of extra hypotheses on $u$, $v$ (such as an analogue of \eqref{japonesuv}), the conclusion of Theorem \ref{unici} also holds when $a\in(-d,-1]$. This conjecture leads to several difficulties, e.g., when $a\in (-d,-1)$, it would be required to show that $\partial_{x_1}D^{a}u\in H^s(\mathbb{R}^d)$ for some $s\in \mathbb{R}$, which is not so immediate to obtain only assuming that $u\in H^{(\frac{d}{2}+1)^{+}}(\mathbb{R}^d)$ as this condition may depend on well-posedness results in weights Sobolev spaces as well. On this subject, the study of persistence in weighted spaces and unique continuation principles for solutions of fKdV with $a\in(-d,-1]$, $d\geq 2$ seems to require different arguments and techniques than those presented in this manuscript, therefore, its study will not be pursued here. Finally, we notice that under the same assumption of regularity and decay \eqref{class}, \eqref{japonesuv}, the results in Theorem \ref{unici} holds true for the IVP \eqref{CombHBO-ZK} with $k_j \geq 2$ be integer, $\nu_j\in\{1,-1\}$, for all $j=1,\dots, n$.


\subsection*{Organization of the paper} Section \ref{notation} contains the notation and some fundamental commutator estimates to be used in the deduction of our main results. In this section, we also deduce Lemma \ref{lemmafracderivpolyno}, which deals with fractional derivatives estimates for a certain family of homogeneous polynomials.  Section \ref{linearSec} concerns the deduction of our results for the linear fKdV equation in weighted spaces, i.e., Lemmas \ref{linearestilemma} and \ref{unicontlemma} are deduced in this section.  The well-posedness results in Theorem \ref{LWPweights} are established in Section \ref{localweighted1}.  Section \ref{uniquep} is devoted to the deduction of the unique continuation principles stated in Theorems \ref{theoremtwotimes} and \ref{theoremthreetimes} as well as the persistence properties in Corollary \ref{sharpthree}. In this section, we also deduce Corollary \ref{corollaryCombinedfKdV}, which concerns spatial decay properties for solutions of the Cauchy problem \eqref{CombHBO-ZK}.


\section{Notation and Preliminaries}\label{notation}

We use the standard multi-index notation, $\alpha=(\alpha_1,\dots,\alpha_d) \in \mathbb{N}^d$, $\partial^{\alpha}=\partial^{\alpha_1}_{x_1}\cdots \partial_{x_d}^{\alpha_d}$, $|\alpha|=\sum_{j=1}^d \alpha_j$, $\alpha!=\alpha_1 ! \cdots \alpha_d !$ and $\alpha \leq \beta$ if $\alpha_j \leq \beta_j$ for all $j=1,\dots,d$. We say $a \lesssim b$ if there exists a constant $c>0$ such that $a\leq c b$. By $a\thicksim b$, we mean that $a\lesssim b$ and $b\lesssim a$. We write $a \les_{l} b$  to indicate that the implicit constant depends on the parameter $l$.  We denote the commutator between the operators $A$ and $B$ by $[A,B]=AB-BA$.

The Fourier transform is defined by $\widehat{f}(\xi)=\mathcal{F}(f)(\xi)=\int e^{-i\xi\cdot x}f(x)\, dx$. Given $s\in \mathbb{R}$, the Bessel operator $J^s$ is defined via the Fourier multiplier $\langle \xi \rangle^{s}=(1+|\xi|^2)^{\frac{s}{2}}$. As usual, the $L^2$-based Sobolev space $H^s(\mathbb{R}^d)$ is defined by the norm $\|f\|_{H^s}=\|\langle\xi \rangle^s\widehat{f}\|_{H^s}\sim (\|f\|_{L^2}+\|D^sf\|_{L^2})$, and the homogeneous Sobolev space $\dot{H}^s(\mathbb{R}^d)$ is defined by $\|f\|_{\dot{H}^s}=\|D^s f\|_{L^2}=\||\xi|^s \widehat{f}\|_{L^2}$. 
\\ \\
$\mathcal{R}_j=-\frac{\partial_{x_j}}{D}$ denotes the Riesz transform operator in the $j$-variable, and $\mathcal{H}$ stands for the Hilbert transform operator.

\subsection{Commutator estimates}

In this part, we introduce the commutator estimates required to control the dispersive and nonlinear terms in the equation in \eqref{HBO-ZK}.

We will use the following fractional Leibniz rule, see \cite[Theorem 1]{GrafakosOh2014} (see also, \cite{KatoPonce1988}).
\begin{lemma} Let $s>0$, $1<p<\infty$, then 
\begin{equation}\label{fLR}
\begin{aligned}
\|D^s(h g)\|_{L^p}\lesssim \|D^s h\|_{L^p}\|g\|_{L^{\infty}}+\|h\|_{L^{\infty}}\|D^s g\|_{L^p}, 
\end{aligned}
\end{equation}
\end{lemma}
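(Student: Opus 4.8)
The statement to be proved is the Kato--Ponce / Grafakos--Oh fractional Leibniz rule \eqref{fLR}, so the natural plan is not to reprove it from scratch but to indicate the standard Littlewood--Paley route one would take if a self-contained argument were wanted. Here is how I would proceed.

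\medskip

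\textbf{Setup via Littlewood--Paley decomposition.} First I would fix a Littlewood--Paley partition of unity, writing $h=\sum_{j} P_j h$ and $g=\sum_{k} P_k g$, where $P_j$ localizes frequencies to $|\xi|\sim 2^j$, together with the low-frequency projection $P_{\le 0}$. Splitting the product $hg=\sum_{j,k} (P_j h)(P_k g)$ into the three Bony paraproduct pieces --- high--low ($j\gg k$), low--high ($j\ll k$), and high--high ($j\sim k$) --- reduces the estimate of $\|D^s(hg)\|_{L^p}$ to estimating each paraproduct separately. In the high--low and low--high regimes the frequency of the product is comparable to the higher of the two input frequencies, so $D^s$ effectively falls on the high-frequency factor; one then uses the Fefferman--Stein / square-function characterization of $L^p$ for $1<p<\infty$, Hölder's inequality pairing the (square function of the) high-frequency factor in $L^p$ against the low-frequency factor in $L^\infty$, together with the boundedness of the maximal function to sum the low-frequency pieces. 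This yields the two terms $\|D^sh\|_{L^p}\|g\|_{L^\infty}$ and $\|h\|_{L^\infty}\|D^sg\|_{L^p}$ on the right-hand side.

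\medskip

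\textbf{The high--high interaction.} The remaining, and genuinely delicate, piece is the high--high sum $\sum_{j\sim k}(P_j h)(P_k g)$, whose output can have frequency anywhere between $0$ and $2^{j}$. Here I would use that $s>0$: writing $D^s$ acting on a frequency-$\le 2^j$ function and absorbing the loss $2^{sj}$ into the high-frequency factor, one bounds a typical term by $2^{sj}\|P_j h\|_{L^p}\|P_k g\|_{L^\infty}$ (or symmetrically), and because $s>0$ the geometric factors $2^{sj}$ are summable after pairing with the square function of $\{2^{sj}P_j h\}_j$, which is controlled by $\|D^s h\|_{L^p}$. This is the step where the hypothesis $s>0$ is essential and where the most care with the summation over dyadic scales is needed; it is the main obstacle in a from-scratch proof.

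\medskip

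\textbf{Conclusion.} Since in the paper \eqref{fLR} is quoted verbatim from \cite[Theorem 1]{GrafakosOh2014} (and originally \cite{KatoPonce1988}), the cleanest course is simply to invoke that reference: the lemma is stated precisely so that it can be used as a black box in the commutator and nonlinear estimates that follow. I would therefore record the statement, cite \cite{GrafakosOh2014,KatoPonce1988}, and move on, noting only that the proof follows the Littlewood--Paley/paraproduct scheme sketched above with the high--high term handled using $s>0$.
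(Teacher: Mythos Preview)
Your proposal is correct and matches the paper's approach: the paper does not prove this lemma but simply cites \cite[Theorem 1]{GrafakosOh2014} and \cite{KatoPonce1988}, exactly as you suggest. The Littlewood--Paley/paraproduct sketch you provide is accurate and more than the paper itself offers.
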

We recall the following generalization of Calder\'on's first commutator estimate (see \cite{Calderon1965}) in the context of the Riesz transform. The proof of this result can be consulted in \cite[Proposition 1.2]{OscarWHBO}.
\begin{proposition}\label{propconmu}
Let $\mathcal{R}_l$ be the usual Riesz transform in the direction $l=1,\dots,d$, $d\geq 2$. For any $1<p<\infty$ and any multi-index $\alpha$ with $|\alpha|\geq 1$, there exists a constant $c$ depending on $\alpha$ and $p$ such that
\begin{equation}\label{conmuest}
\begin{aligned}
\Big\Vert \mathcal{R}_l(g\partial^{\alpha}f)-g \mathcal{R}_l\partial^{\alpha}f-\sum_{1\leq |\beta| < |\alpha|}\frac{1}{\beta!}\partial^{\beta}g D_{R_l}^{\beta}\partial^{\alpha}f \Big\Vert_{L^p} \leq c_{\alpha,p} \sum_{|\beta|=|\alpha|}\left\|\partial^{\beta}g\right\|_{L^{\infty}}\left\|f\right\|_{L^p}.
\end{aligned}
\end{equation}
The operator $D_{R_l}^{\beta}$ is defined via its Fourier transform as
\begin{equation}\label{diffeOperRie}
    \widehat{D_{R_l}^{\beta}f}(\xi)=i^{-|\beta|}\partial^{\beta}_{\xi}\left(\frac{-i\xi_l}{|\xi|}\right)\widehat{f}(\xi).
\end{equation}
\end{proposition}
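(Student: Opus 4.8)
The plan is to reduce \eqref{conmuest} to an estimate for a family of bilinear Fourier multiplier operators whose symbols are the order-$|\alpha|$ Taylor remainders of $m_l(\xi):=-i\xi_l/|\xi|$, the symbol of $\mathcal{R}_l$. By density it is enough to prove the inequality for $f,g\in\mathcal{S}(\mathbb{R}^d)$ and then pass to the limit. Writing $\widehat{g\,\partial^\alpha f}(\xi)=c_d\int\widehat{g}(\eta)\,\widehat{\partial^\alpha f}(\xi-\eta)\,d\eta$ and using \eqref{diffeOperRie} in the form $\widehat{D_{R_l}^\beta h}(\xi)=i^{-|\beta|}(\partial_\xi^\beta m_l)(\xi)\,\widehat{h}(\xi)$, a direct computation combining the Fourier transforms of the three groups of terms appearing in \eqref{conmuest} shows that the function inside the norm equals $c_d\int\!\!\int e^{ix\cdot\xi}\,\widehat{g}(\eta)\,\widehat{\partial^\alpha f}(\xi-\eta)\,\mathcal{M}_\alpha(\xi,\eta)\,d\eta\,d\xi$, where
\begin{equation*}
\mathcal{M}_\alpha(\xi,\eta):=m_l(\xi)-m_l(\xi-\eta)-\sum_{1\le|\beta|<|\alpha|}\frac{\eta^\beta}{\beta!}(\partial_\xi^\beta m_l)(\xi-\eta)
\end{equation*}
is precisely the order-$|\alpha|$ Taylor remainder of $m_l$ with base point $\xi-\eta$ evaluated at $(\xi-\eta)+\eta$; in this identity the correction terms $\sum_{1\le|\beta|<|\alpha|}\tfrac{1}{\beta!}\partial^\beta g\,D_{R_l}^\beta\partial^\alpha f$ account exactly for the polynomial part of the expansion.

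Next I would insert the integral form of the remainder,
\begin{equation*}
\mathcal{M}_\alpha(\xi,\eta)=|\alpha|\sum_{|\beta|=|\alpha|}\frac{\eta^\beta}{\beta!}\int_0^1(1-t)^{|\alpha|-1}(\partial_\xi^\beta m_l)\big(\xi-(1-t)\eta\big)\,dt,
\end{equation*}
which is legitimate since $m_l\in C^\infty(\mathbb{R}^d\setminus\{0\})$ (the set of $(\xi,\eta)$ for which the segment $[\xi-\eta,\xi]$ meets the origin is negligible, and the a priori bounds below justify the interchange of integrals). Using $\widehat{\partial^\alpha f}(\xi-\eta)=(i(\xi-\eta))^\alpha\widehat{f}(\xi-\eta)$ and $\eta^\beta\widehat{g}(\eta)=i^{-|\beta|}\widehat{\partial^\beta g}(\eta)$, the quantity to be estimated becomes a finite linear combination, over $|\beta|=|\alpha|$, of the $t$-averages $\int_0^1(1-t)^{|\alpha|-1}B_t^\beta(\partial^\beta g,f)\,dt$, where $B_t^\beta$ is the bilinear Fourier multiplier operator with symbol $\sigma_t^\beta(\xi,\eta)=(i(\xi-\eta))^\alpha(\partial_\xi^\beta m_l)\big(\xi-(1-t)\eta\big)$. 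Since $\partial_\xi^\beta m_l$ is homogeneous of degree $-|\beta|=-|\alpha|$, the symbol $\sigma_t^\beta$ is homogeneous of degree $0$; it is smooth and bounded away from the hyperplane $\{\xi=(1-t)\eta\}$ but genuinely singular there, so the Coifman--Meyer theorem cannot be applied globally. As $\int_0^1(1-t)^{|\alpha|-1}\,dt<\infty$, it suffices to bound $B_t^\beta:L^\infty(\mathbb{R}^d)\times L^p(\mathbb{R}^d)\to L^p(\mathbb{R}^d)$ uniformly in $t\in[0,1]$.

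For this bound I would run a Littlewood--Paley decomposition in $\eta$ and $\xi-\eta$ and split into the region $|\eta|\le\tfrac12|\xi-\eta|$ and its complement. On the former one checks $|\xi-(1-t)\eta|\sim|\xi-\eta|\sim|\xi|+|\eta|$ uniformly in $t$, so $\sigma_t^\beta$ satisfies the Coifman--Meyer estimates $|\partial_\xi^a\partial_\eta^b\sigma_t^\beta(\xi,\eta)|\lesssim(|\xi|+|\eta|)^{-|a|-|b|}$ and this low--high piece, with $\partial^\beta g$ at low frequency, is bounded $L^\infty\times L^p\to L^p$ by the standard paraproduct estimate. The region $|\eta|>\tfrac12|\xi-\eta|$ is where the singularity of $\partial_\xi^\beta m_l$ at $\xi-(1-t)\eta=0$ is active and where soft multiplier bounds fail; there I would use that the factor $(i(\xi-\eta))^\alpha$ vanishes to order $|\alpha|$ at $\xi-\eta=0$, which compensates the homogeneity $-|\alpha|$ of $\partial_\xi^\beta m_l$ and restores a Calder\'on--Zygmund structure -- most transparently by returning to the physical side, integrating by parts to move $\partial^\alpha$ off $f$ and Taylor-expanding $g(y)-g(x)$ around $x$, so that this contribution becomes a bounded superposition over $s\in[0,1]$ of Calder\'on--Zygmund operators acting on $f$ with operator norm $\lesssim\|\partial^\beta g\|_{L^\infty}$. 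Collecting the pieces, integrating in $t$ and summing over $|\beta|=|\alpha|$ yields \eqref{conmuest}. I expect the main obstacle to be exactly this last region: the soft bilinear-multiplier machinery is not enough, so one has to make the Calder\'on--Zygmund cancellation of the Riesz commutator explicit and keep every bound uniform in $t\in[0,1]$ so that the $t$-average converges; this bookkeeping -- including the precise matching of the correction terms $\sum\tfrac{1}{\beta!}\partial^\beta g\,D_{R_l}^\beta\partial^\alpha f$ with the polynomial part of the Taylor expansion of $g$ -- is the content of \cite[Proposition 1.2]{OscarWHBO}. As a check, for $|\alpha|=1$ the correction sum is empty and the statement reduces to the classical higher-dimensional Calder\'on commutator estimate $\|[\mathcal{R}_l,g]\partial_jf\|_{L^p}\lesssim\|\nabla g\|_{L^\infty}\|f\|_{L^p}$, which also provides the base case of an induction on $|\alpha|$.
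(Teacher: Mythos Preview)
The paper does not actually prove Proposition~\ref{propconmu}; it simply records the statement and cites \cite[Proposition~1.2]{OscarWHBO} for the proof. So there is no in-paper argument to compare against, and you yourself point to the same reference at the end of your proposal.

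That said, your outline is a reasonable reconstruction of how such a result is typically obtained: writing the left-hand side of \eqref{conmuest} as a bilinear Fourier multiplier whose symbol is the order-$|\alpha|$ Taylor remainder of $m_l$ at base point $\xi-\eta$ is exactly the right starting identity, and the integral remainder plus a high--low frequency split is the standard route. The one place where your sketch is genuinely incomplete is the region $|\eta|>\tfrac12|\xi-\eta|$: saying that the vanishing of $(i(\xi-\eta))^\alpha$ ``restores a Calder\'on--Zygmund structure'' and that one can ``return to the physical side'' hides the real work. In that region the symbol $(\partial_\xi^\beta m_l)(\xi-(1-t)\eta)$ is singular on a hyperplane that moves with $t$, and one must either carry out the kernel-side Taylor expansion of $g$ carefully (which is closer to the original Calder\'on argument and to what is done in \cite{OscarWHBO}) or produce explicit uniform-in-$t$ Calder\'on--Zygmund bounds for the resulting family of operators. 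Your text gestures at both options without committing to either; a complete proof must pick one and execute it, keeping track of why the constants are uniform in $t\in[0,1]$. The base case $|\alpha|=1$ you mention is indeed Calder\'on's first commutator in the Riesz-transform setting, and an induction on $|\alpha|$ is a clean way to organize the argument.
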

Above, we will follow the standard convention for the empty summation (such as $\sum_{1\leq |\beta|<1}(\cdots)$) is defined as zero. Next, we present some estimates for the operators $D_{R_l}^{\beta}$ introduced above.
\begin{lemma}\label{lemmaRieszdeco} Let  $\alpha$ and $\beta$ be multi-indexes and  $f\in \dot{H}^{|\alpha|-|\beta|}(\mathbb{R}^d)$. Then there exist constants $c_{\sigma} \in \mathbb{R}$ such that
\begin{equation}
D_{R_1}^{\beta}(\partial^{\alpha}f)=\sum_{\sigma} c_{\sigma} \mathcal{R}_{\sigma}(D^{|\alpha|-|\beta|}f),
\end{equation}
where the sum runs over all index $\sigma=(\sigma_{1},\dots,\sigma_{|\alpha|+|\beta|+1})$ with integer components such that $1\leq \sigma_j \leq d$, $j=1,\dots,|\alpha|+|\beta|+1$ and we denote by
$$\mathcal{R}_{\sigma}=\mathcal{R}_{\sigma_1}\cdots \mathcal{R}_{\sigma_{|\alpha|+|\beta|+1}}.$$
\end{lemma}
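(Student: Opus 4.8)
The plan is to proceed by induction on $|\beta|$, the order of the differential operator $D_{R_1}^{\beta}$, keeping track at each step of how applying one more $\xi$-derivative to the symbol $-i\xi_1/|\xi|$ affects the structure of the Fourier multiplier. First I would record the base case: when $|\beta|=0$, by \eqref{diffeOperRie} the operator $D_{R_1}^{\beta}$ is just multiplication by $-i\xi_1/|\xi|$ on the Fourier side, which is $\mathcal{R}_1$; hence $D_{R_1}^{\beta}(\partial^\alpha f)=\mathcal R_1(\partial^\alpha f)$, and since $\partial^\alpha$ has Fourier symbol $\prod_j(i\xi_j)^{\alpha_j}$, a factor of $|\xi|^{|\alpha|}$ can be extracted together with an appropriate product of $\xi_j/|\xi|$ factors, i.e. a product of Riesz symbols, giving exactly $\sum_\sigma c_\sigma \mathcal R_\sigma(D^{|\alpha|}f)$ with $|\sigma|=|\alpha|+1$ as claimed. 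Note the count: $|\beta|=0$, so the claimed length of $\sigma$ is $|\alpha|+|\beta|+1=|\alpha|+1$, and indeed writing $(i\xi_j)^{\alpha_j}=i^{\alpha_j}|\xi|^{\alpha_j}(\xi_j/|\xi|)^{\alpha_j}$ we need $|\alpha|$ Riesz factors from $\partial^\alpha$ plus one from $\mathcal R_1$.

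Next, for the inductive step I would compute $\partial_{\xi_k}$ of a generic term of the form $c\,\xi^{\gamma}|\xi|^{-\ell}$ (a homogeneous rational symbol of degree $|\gamma|-\ell$, which is the shape of every symbol that arises), using $\partial_{\xi_k}(\xi^\gamma)=\gamma_k \xi^{\gamma-e_k}$ and $\partial_{\xi_k}(|\xi|^{-\ell})=-\ell\, \xi_k |\xi|^{-\ell-2}$. Each differentiation thus produces a sum of two terms, each again of the form constant times $\xi^{\gamma'}|\xi|^{-\ell'}$ with $|\gamma'|-\ell'=|\gamma|-\ell-1$, i.e. the homogeneity degree drops by one and the "number of $\xi$ and $1/|\xi|$ factors" grows in a controlled way; in fact one checks that after $|\beta|$ derivatives applied to the degree-$|\alpha|-1$ symbol of $\mathcal R_1 \partial^\alpha$ (which is $i^{-1}$ times $(-i\xi_1/|\xi|)\prod_j(i\xi_j)^{\alpha_j}$, of net homogeneity $|\alpha|$), one obtains a finite sum of terms $c\,\xi^{\gamma}|\xi|^{-\ell}$ of homogeneity $|\alpha|-|\beta|$, each of which can be rewritten as $c'\,|\xi|^{|\alpha|-|\beta|}\cdot \frac{\xi^{\mu}}{|\xi|^{|\mu|}}$ for some multi-index $\mu$, and $\frac{\xi^\mu}{|\xi|^{|\mu|}}$ is (up to a power of $i$) the multiplier of $\mathcal R_\mu=\mathcal R_{\mu_1}\cdots$. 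This is precisely the asserted decomposition $D_{R_1}^\beta(\partial^\alpha f)=\sum_\sigma c_\sigma \mathcal R_\sigma(D^{|\alpha|-|\beta|}f)$; a bookkeeping check on the length of $\sigma$ confirms the bound $|\sigma|=|\alpha|+|\beta|+1$ (each of the $|\beta|$ derivatives can add at most one extra Riesz factor, starting from the $|\alpha|+1$ factors of the base case). The membership $f\in \dot H^{|\alpha|-|\beta|}$ guarantees $D^{|\alpha|-|\beta|}f\in L^2$, and the Riesz transforms are bounded on $L^2$, so every term on the right-hand side is well-defined.

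The main obstacle — really the only nontrivial point — is the combinatorial bookkeeping: verifying that the homogeneity degree is exactly $|\alpha|-|\beta|$ after $|\beta|$ differentiations (so that the extracted power of $|\xi|$ is exactly $D^{|\alpha|-|\beta|}$, not something with a leftover fractional power), and that the number of residual Riesz factors never exceeds $|\alpha|+|\beta|+1$. Both are immediate from the two differentiation rules above once one notes that $\partial_{\xi_k}$ either decreases $|\gamma|$ by one (leaving $\ell$ fixed) or increases $\ell$ by two and $|\gamma|$ by one, so in either case $|\gamma|-\ell$ drops by exactly one while $|\gamma|+\ell$ increases by at most one. I would present this as a short lemma-internal induction on $|\beta|$ rather than tracking an explicit formula for the $c_\sigma$, since only their existence (not their values) is needed downstream.
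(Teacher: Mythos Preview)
The paper does not prove this lemma here; it cites \cite{OscarWHBO}. Your overall strategy---show that the Fourier multiplier is a finite sum of homogeneous rational terms $c\,\xi^{\gamma}|\xi|^{-\ell}$ of degree $|\alpha|-|\beta|$ and then rewrite each as $|\xi|^{|\alpha|-|\beta|}$ times a product of Riesz symbols---is the natural one and is surely what the cited proof does. However, two points in your write-up are genuinely off and would need correction.

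First, the induction is applied to the wrong object. From \eqref{diffeOperRie}, the symbol of $D_{R_1}^{\beta}(\partial^{\alpha}f)$ is
\[
\big(i^{-|\beta|}\partial_{\xi}^{\beta}(-i\xi_1/|\xi|)\big)\cdot (i\xi)^{\alpha},
\]
so the $\xi$-derivatives hit \emph{only} the Riesz symbol $-i\xi_1/|\xi|$, never the factor $(i\xi)^{\alpha}$. Your inductive step instead differentiates ``the symbol of $\mathcal{R}_1\partial^{\alpha}$'', i.e.\ the full product, which introduces Leibniz cross-terms and computes the multiplier of a different operator. The fix is to run the induction on $\partial_{\xi}^{\beta}(\xi_1/|\xi|)$ alone---showing it is a sum of terms $c_{\gamma}\,\xi^{\gamma}|\xi|^{-(|\gamma|+|\beta|)}$ with $|\gamma|\leq |\beta|+1$---and only afterwards multiply by $(i\xi)^{\alpha}$ and extract $|\xi|^{|\alpha|-|\beta|}$.

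Second, the lemma asserts Riesz chains of length \emph{exactly} $|\alpha|+|\beta|+1$, but your tracking only yields lengths \emph{at most} that (indeed your claim that ``$|\gamma|+\ell$ increases by at most one'' is wrong: the branch hitting $|\xi|^{-\ell}$ increases it by $3$, the other decreases it by $1$). One closes this gap with the identity $\sum_{j=1}^{d}\mathcal{R}_j^2=-I$ (equivalently $\sum_j \xi_j^2/|\xi|^2=1$), which pads any shorter product of Riesz transforms up to the common length $|\alpha|+|\beta|+1$; this step should be made explicit.
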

The proof of the previous lemma can be consulted in \cite[Lemma 2.1]{OscarWHBO}. 
\\ \\
In the one-dimensional setting, the following version of Proposition \ref{propconmu} was deduced in \cite[Lemma 3.1]{DawsonMCPON}.
\begin{proposition}\label{CalderonComGU}
Let $1<p<\infty$ and $l,m \in \mathbb{Z}^{+}\cup \{0\}$, $l+m\geq 1$ then
\begin{equation}\label{Comwellprel1}
    \|\partial_x^l[\mathcal{H},g]\partial_x^{m}f\|_{L^p(\mathbb{R})} \lesssim_{p,l,m} \|\partial_x^{l+m} g\|_{L^{\infty}(\mathbb{R})}\|f\|_{L^p(\mathbb{R})}.
\end{equation}
\end{proposition}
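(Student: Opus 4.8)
The plan is to deduce the one-dimensional commutator estimate \eqref{Comwellprel1} by reducing it, via the Leibniz rule, to the basic Calder\'on commutator estimate and then interpolating, so that only the endpoint cases $l=0$ or $m=0$ require real work. First I would recall the classical estimate of Calder\'on (the case $l=m=0$ being excluded, but $l+m=1$ serving as the base): for $1<p<\infty$,
\begin{equation*}
\|[\mathcal{H},g]\partial_x f\|_{L^p(\mathbb{R})}\lesssim_p \|g'\|_{L^\infty(\mathbb{R})}\|f\|_{L^p(\mathbb{R})},
\end{equation*}
which is \cite{Calderon1965}, and which is also the $d=1$ instance of Proposition \ref{propconmu} (with $\mathcal{R}_1=\mathcal{H}$ and $|\alpha|=1$, the intermediate sum being empty). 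Since $\mathcal{H}$ commutes with $\partial_x$, one has $\partial_x^l[\mathcal{H},g]\partial_x^m f=[\mathcal{H},\partial_x^l(g\,\cdot\,)\,\partial_x^{-l}]\partial_x^{l+m}f$ only formally; instead I would expand $\partial_x^l\big([\mathcal{H},g]\partial_x^m f\big)=\partial_x^l\big(\mathcal{H}(g\,\partial_x^m f)-g\,\mathcal{H}\partial_x^m f\big)$ using the ordinary Leibniz rule on both terms. The $\mathcal{H}(g\,\partial_x^m f)$ term, after distributing $\partial_x^l$, produces terms $\mathcal{H}(\partial_x^{l-j}g\,\partial_x^{m+j}f)$ for $0\le j\le l$, and likewise $\partial_x^l(g\,\mathcal{H}\partial_x^m f)=\sum_j\binom{l}{j}\partial_x^{l-j}g\,\mathcal{H}\partial_x^{m+j}f$. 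Grouping the $j$-th terms, the combination $\mathcal{H}(\partial_x^{l-j}g\,\partial_x^{m+j}f)-\partial_x^{l-j}g\,\mathcal{H}\partial_x^{m+j}f=[\mathcal{H},\partial_x^{l-j}g]\partial_x^{m+j}f$, except that the top term $j=l$ must be handled separately because then $\partial_x^{l-j}g=g$ and we are back to a genuine commutator $[\mathcal{H},g]\partial_x^{m+l}f$, which is exactly the base estimate with $m+l\ge 1$ derivatives falling on $f$ — but this requires $m+l$ derivatives of $f$ in $L^p$, too many. So the correct bookkeeping is the reverse: I would instead write $\partial_x^l[\mathcal{H},g]\partial_x^m f$ and move all but one derivative off $f$.

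Concretely, the cleanest route is induction on $l+m$. For the base case $l+m=1$ we are in one of two situations: $(l,m)=(0,1)$ is Calder\'on's estimate verbatim; $(l,m)=(1,0)$ is $\partial_x[\mathcal{H},g]f=[\mathcal{H},g]\partial_x f+[\mathcal{H},g']f=[\mathcal{H},g]\partial_x f+\mathcal{H}(g'f)-g'\mathcal{H}f$, and the first term is Calder\'on while the last two are bounded in $L^p$ by $\|g'\|_{L^\infty}\|f\|_{L^p}$ using $L^p$-boundedness of $\mathcal{H}$ and H\"older. For the inductive step with $l+m\ge 2$: if $m\ge 1$, write $\partial_x^l[\mathcal{H},g]\partial_x^m f=\partial_x^l\big([\mathcal{H},g]\partial_x(\partial_x^{m-1}f)\big)$ and use the identity $[\mathcal{H},g]\partial_x h=\partial_x([\mathcal{H},g]h)-[\mathcal{H},g']h$ to get $\partial_x^{l+1}\big([\mathcal{H},g]\partial_x^{m-1}f\big)-\partial_x^l\big([\mathcal{H},g']\partial_x^{m-1}f\big)$; both have total order $(l+1)+(m-1)=l+m$ but fewer derivatives on $f$ in the first factor, and the inductive hypothesis applies after noting $\|\partial_x^{(l+1)+(m-1)}g\|_{L^\infty}=\|\partial_x^{l+m}g\|_{L^\infty}$ and $\|\partial_x^{l+(m-1)}g'\|_{L^\infty}=\|\partial_x^{l+m}g\|_{L^\infty}$. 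If $m=0$ and $l\ge 1$, use $\partial_x^l[\mathcal{H},g]f=\sum_{j=0}^{l}\binom{l}{j}\partial_x^{l-j}\big([\mathcal{H},\partial_x^{j}?]\big)$ — more carefully, $\partial_x([\mathcal{H},g]h)=[\mathcal{H},g]\partial_x h+[\mathcal{H},g']h=[\mathcal{H},g]\partial_x h+\mathcal{H}(g'h)-g'\mathcal{H}h$, so peeling one $\partial_x$ at a time reduces $(l,0)$ to a sum of terms of the form $\partial_x^{l'}[\mathcal{H},g^{(r)}]\partial_x^{0}f$ with $l'+r=l$, $r\ge 1$ (handled by induction in $l$ since $r\ge 1$ lowers the commutator-derivative count, with $\|g^{(r)}\|$-type norms all dominated by $\|\partial_x^l g\|_{L^\infty}$ after possibly also using $\|g^{(r)}\|_{L^\infty}$ — note that only $\|\partial_x^{l+m}g\|_{L^\infty}$ is claimed, and $\|g^{(r)}\|_{L^\infty}$ for $r<l+m$ is not controlled; this is the one subtlety) together with the single genuine term $[\mathcal{H},g]\partial_x^{l-1+1}\!f$-type contributions.

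The main obstacle I anticipate is precisely this last point: the right-hand side of \eqref{Comwellprel1} contains only the top-order norm $\|\partial_x^{l+m}g\|_{L^\infty}$ and not the lower-order norms $\|g^{(r)}\|_{L^\infty}$, $r<l+m$, so the naive Leibniz expansion that dumps derivatives arbitrarily between $g$ and $f$ and then applies Calder\'on term-by-term will generate inadmissible norms like $\|g'\|_{L^\infty}\|\partial_x^{l+m-1}f\|_{L^p}$. The resolution — and the content of the proof — is that every such expansion can be re-summed so that derivatives on $f$ beyond the first are always traded against derivatives on $g$ via the commutation $[\mathcal{H},g]\partial_x=\partial_x[\mathcal{H},g]-[\mathcal{H},g']$, ultimately leaving every term in the form $C\,\mathcal{H}(\partial_x^{l+m}g\cdot f)$, $C\,\partial_x^{l+m}g\cdot\mathcal{H}f$, or $C\,[\mathcal{H},g]\partial_x^{?}\!f$ with at most one derivative on $f$ in the commutator — and for the residual genuine commutator one iterates Calder\'on's estimate $l+m$ times, which at each stage reproduces the single term $\|g^{(l+m)}\|_{L^\infty}\|f\|_{L^p}$ plus strictly lower-commutator-order remainders. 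Tracking that every intermediate coefficient reorganizes to $\|\partial_x^{l+m}g\|_{L^\infty}$ (never a lower derivative of $g$ standing alone against high derivatives of $f$) is the bookkeeping heart of the argument; an alternative that sidesteps the bookkeeping is to regard $\partial_x^l[\mathcal{H},g]\partial_x^m$ as a pseudodifferential/Coifman--Meyer multilinear operator whose symbol has the required homogeneity and invoke the Coifman--Meyer theorem directly, but the elementary induction above is in the spirit of \cite[Lemma 3.1]{DawsonMCPON} and is what I would write out.
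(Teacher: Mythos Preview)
The paper does not supply a proof of this proposition; it simply records it as \cite[Lemma 3.1]{DawsonMCPON}. So there is no in-paper argument to compare your sketch against, but your proposal itself has a real gap worth naming.

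Your induction does not close. The commutation identity $[\mathcal{H},g]\partial_x h=\partial_x([\mathcal{H},g]h)-[\mathcal{H},g']h$ lets you trade between $(l,m)$ and $(l\pm 1,m\mp 1)$ while throwing off terms of strictly lower total order (which the inductive hypothesis handles). But the total order $l+m$ never drops in the leading term, so after all the shuffling you are left with the pure case $(0,n)$, $n=l+m\ge 2$, namely
\[
\|[\mathcal{H},g]\partial_x^{n}f\|_{L^p}\lesssim \|\partial_x^{n}g\|_{L^\infty}\|f\|_{L^p},
\]
and this cannot be obtained from Calder\'on's first commutator plus the identity above: applying the identity once more just returns you to the same expression. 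Your paragraph about ``re-summing so that derivatives on $f$ beyond the first are always traded against derivatives on $g$'' is exactly where the argument becomes circular, and you flag it yourself (``this is the one subtlety'') without resolving it.

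The missing ingredient is a direct proof of the $(0,n)$ case. Two clean routes: (a) integrate by parts $n$ times in the kernel representation $c\int\frac{g(x)-g(y)}{x-y}\partial_y^n f(y)\,dy$ and use the Taylor remainder to see that the resulting kernel is a Calder\'on--Zygmund kernel with constant $\sim\|\partial_x^n g\|_{L^\infty}$ (this is essentially what \cite{DawsonMCPON} do); or (b) observe that the $(0,n)$ case is literally the $d=1$ instance of Proposition~\ref{propconmu}, because in one dimension the symbol of $\mathcal{H}$ is piecewise constant, so $\partial_\xi^{j}(-i\,\mathrm{sgn}\,\xi)=0$ for $\xi\neq 0$ and every intermediate operator $D_{R_1}^{\beta}$ in \eqref{conmuest} vanishes identically on $L^p$. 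Once $(0,n)$ is in hand, the full statement follows from the Leibniz expansion $\partial_x^{l}[\mathcal{H},g]\partial_x^{m}f=\sum_{j=0}^{l}\binom{l}{j}[\mathcal{H},\partial_x^{j}g]\partial_x^{l+m-j}f$, each summand being a $(0,l+m-j)$ case with $\partial_x^{j}g$ in place of $g$, so that only $\|\partial_x^{l+m}g\|_{L^\infty}$ appears on the right.
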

We also require some estimates involving homogeneous fractional derivatives. 
\begin{proposition}\label{fractionalDeriv}
Let $s>0$ and $1<p<\infty$. Then for any $s_1,s_2\geq 0$, with $s_1+s_2=s$, it follows 
\begin{equation}
\|D^s(fg)-\sum_{|\beta|\leq s_1} \frac{1}{\beta!} \partial^{\beta}f D^{s,\beta} g-\sum_{|\beta|< s_2} \frac{1}{\beta!} \partial^{\beta}g D^{s,\beta} f\|_{L^p}\lesssim \|D^{s_1}f\|_{L^{\infty}}\|D^{s_2}g\|_{L^{p}}.
\end{equation}
The operator $D^{s,\beta}$ is defined via the Fourier transform as
\begin{equation*}
\widehat{D^{s,\beta}g}(\xi)=\widehat{D^{s,\beta}}(\xi)\widehat{g}(\xi), \, \, \text{ and } \, \,  \widehat{D^{s,\beta}}(\xi)=i^{-|\beta|}\partial_{\xi}^{\beta}(|\xi|^s).
\end{equation*}
\end{proposition}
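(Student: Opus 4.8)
The inequality is a refined Coifman--Meyer/Kato--Ponce commutator expansion, and the plan is to prove it by a Littlewood--Paley paraproduct decomposition. Fix a homogeneous dyadic partition of unity on $\R^d\setminus\{0\}$ with projectors $\Delta_j$ and partial sums $S_j=\sum_{l\le j}\Delta_l$, and split the product bilinearly as $fg=\Pi_{\mathrm{lh}}(f,g)+\Pi_{\mathrm{hl}}(f,g)+\Pi_{\mathrm{hh}}(f,g)$, where $\Pi_{\mathrm{lh}}(f,g)=\sum_j(S_{j-3}f)(\Delta_jg)$, $\Pi_{\mathrm{hl}}(f,g)=\sum_j(\Delta_jf)(S_{j-3}g)$, and $\Pi_{\mathrm{hh}}$ collects the diagonal terms $|l-j|\le 2$. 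Writing $\zeta$ for the frequency dual to the $f$-slot and $\eta$ for that of the $g$-slot, applying $D^s$ produces bilinear Fourier multipliers with symbol $|\zeta+\eta|^s$ restricted to the cones $|\zeta|\lesssim|\eta|$, $|\eta|\lesssim|\zeta|$, and $|\zeta|\sim|\eta|$ respectively; the cone decomposition is precisely what makes the non-smoothness of $|\xi|^s$ at the origin harmless. No correction terms arise from the diagonal piece: on $|\zeta|\sim|\eta|$ one has $|\zeta+\eta|^s=|\zeta|^{s_1}|\eta|^{s_2}m_{\mathrm{hh}}(\zeta,\eta)$ with $m_{\mathrm{hh}}$ (times the cone cutoff) a Coifman--Meyer symbol, so, block by block and then summed via the Littlewood--Paley square function together with the fractional Leibniz rule \eqref{fLR}, $\|D^s\Pi_{\mathrm{hh}}(f,g)\|_{L^p}\lesssim\|D^{s_1}f\|_{L^\infty}\|D^{s_2}g\|_{L^p}$.

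\textbf{The low--high piece.} Here $f$ carries the small frequency, so on the cone $|\zeta|\le\frac12|\eta|$ the map $\zeta\mapsto|\zeta+\eta|^s$ is smooth and we Taylor-expand it about $\zeta=0$ to order $M:=\lfloor s_1\rfloor$ (take $M=s_1$ if $s_1\in\Z$):
\begin{equation*}
|\zeta+\eta|^s=\sum_{|\beta|\le M}\frac{1}{\beta!}(\partial^\beta|\cdot|^s)(\eta)\,\zeta^\beta+R_{\mathrm{lh}}(\zeta,\eta),\qquad |R_{\mathrm{lh}}(\zeta,\eta)|\lesssim|\zeta|^{M+1}|\eta|^{s-M-1}.
\end{equation*}
Each main bilinear symbol $\tfrac1{\beta!}(\partial^\beta|\cdot|^s)(\eta)\zeta^\beta$ is exactly the symbol of $\tfrac1{\beta!}\partial^\beta f\,D^{s,\beta}g$ (the factors $i^{-|\beta|}$ in $\widehat{D^{s,\beta}}$ and $i^{|\beta|}$ from $\widehat{\partial^\beta}$ cancel), and summing over $j$ reconstructs $\sum_{|\beta|\le s_1}\tfrac1{\beta!}\partial^\beta f\,D^{s,\beta}g$ up to the contribution of the complementary cone $|\zeta|\gtrsim|\eta|$, on which $(\partial^\beta|\cdot|^s)(\eta)\zeta^\beta=|\zeta|^{s_1}|\eta|^{s_2}m(\zeta,\eta)$ with $m$ bounded because $|\beta|\le M\le s_1$ forces $(|\eta|/|\zeta|)^{s_1-|\beta|}\le1$ there; that contribution is a Coifman--Meyer operator applied to $(D^{s_1}f,D^{s_2}g)$, hence absorbed in the right-hand side. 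For the remainder block at scale $j$, the product $(S_{j-3}f)(\Delta_jg)$ has frequency support in $|\xi|\sim2^j$, so $D^s$ acts as $2^{js}$; since $M+1>s_1$ one uses Bernstein to shift the excess $M+1-s_1$ derivatives off the low-frequency factor, $\|D^{M+1}S_{j-3}f\|_{L^\infty}\lesssim2^{j(M+1-s_1)}\|D^{s_1}f\|_{L^\infty}$, while the surplus weight $2^{j(s-M-1)}$ on the $g$-factor combines with $\|\Delta_jg\|_{L^p}\sim2^{-js_2}\|D^{s_2}\Delta_jg\|_{L^p}$ to exactly cancel the $2^{j(M+1-s_1)}$. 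One then sums the blocks using the Littlewood--Paley characterization of $\|\cdot\|_{L^p}$ and the Fefferman--Stein vector-valued maximal inequality, obtaining $\lesssim\|D^{s_1}f\|_{L^\infty}\|D^{s_2}g\|_{L^p}$. The high--low piece $\Pi_{\mathrm{hl}}$ is handled identically with the roles of $\zeta,\eta$ and of $s_1,s_2$ interchanged, expanding to order $\lceil s_2\rceil-1$ so that the reconstructed main terms are exactly $\sum_{|\beta|<s_2}\tfrac1{\beta!}\partial^\beta g\,D^{s,\beta}f$. Adding the three regimes yields Proposition~\ref{fractionalDeriv}.

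\textbf{Main obstacle.} The essential points are bookkeeping rather than conceptual: carrying out the Taylor expansion of $|\zeta+\eta|^s$ on each cone with uniform symbol bounds on the remainder (so the sum over dyadic scales converges), verifying that the derivative-shift in the low/high--low regimes lands precisely the norms $\|D^{s_1}f\|_{L^\infty}$ and $\|D^{s_2}g\|_{L^p}$ (this is where the cone restriction $|\zeta|\lesssim|\eta|$, i.e. the paraproduct structure, is used crucially), and checking the endpoint $p_1=\infty$, for which the $L^\infty$-slot makes the Fefferman--Stein maximal apparatus (rather than a bare Coifman--Meyer bound) the right tool. An alternative is simply to cite the corresponding statements in the literature (e.g. \cite{GrafakosOh2014} and the references therein), from which the present formulation follows by the same dyadic argument.
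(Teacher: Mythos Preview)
The paper does not actually prove Proposition~\ref{fractionalDeriv}: immediately after the statement it writes ``Proposition~\ref{fractionalDeriv} is a particular case of \cite[Theorem~1.2]{DonLi2019} (see also \cite{KPV1993})'' and moves on. Your paraproduct\,/\,Taylor-expansion argument is precisely the strategy underlying those cited references, so your proposal is correct and in fact supplies what the paper only cites; the only ``difference'' is that the paper outsources the proof entirely.
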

Proposition \ref{fractionalDeriv} is a particular case of \cite[Theorem 1.2]{DonLi2019} (see also \cite{KPV1993}). The next proposition was also deduced in \cite{DonLi2019,KPV1993}.
\begin{proposition}\label{commutatorestim1}
Let $0<s< 1$, $1<p<\infty$,
\begin{equation}
    \|[D^s,g]f\|_{L^p}\lesssim \|D^sg\|_{L^{\infty}}\|f\|_{L^p}.
\end{equation}
\end{proposition}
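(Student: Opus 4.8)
The plan is first to observe that Proposition~\ref{commutatorestim1} is already contained in Proposition~\ref{fractionalDeriv}, so that essentially no new argument is needed. Apply Proposition~\ref{fractionalDeriv} with $s_1=s$ and $s_2=0$ (so that $s_1+s_2=s$ with both $\ge 0$), and interchange the roles of $f$ and $g$. Since $0<s<1$, the only multi-index $\beta$ with $|\beta|\le s_1=s$ is $\beta=0$, for which $\widehat{D^{s,0}}(\xi)=|\xi|^s$, i.e., $D^{s,0}=D^s$; and the second correction sum $\sum_{|\beta|<s_2}=\sum_{|\beta|<0}$ is empty by the empty-sum convention, while $D^{s_2}=D^0$ is the identity. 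Thus Proposition~\ref{fractionalDeriv} collapses to $\|D^s(gf)-gD^sf\|_{L^p}\lesssim \|D^sg\|_{L^\infty}\|f\|_{L^p}$, which is exactly the asserted bound for $[D^s,g]f=D^s(gf)-gD^sf$.

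If instead one wants a self-contained proof (as in \cite{KPV1993,DonLi2019}), I would proceed in three steps. First, observe that $\|D^sg\|_{L^\infty}$ controls the homogeneous H\"older seminorm of $g$: writing $g=I_s(D^sg)$ with $I_s$ the Riesz potential of order $s$ and using the scaling bound $\int_{\mathbb{R}^d}\big||x-z|^{-(d-s)}-|y-z|^{-(d-s)}\big|\,dz\lesssim |x-y|^s$ — whose integral converges near $z=x,y$ since $d-s<d$ and near $z=\infty$ since $s<1$ — one gets $|g(x)-g(y)|\lesssim \|D^sg\|_{L^\infty}|x-y|^s$. Second, use the hypersingular-integral formula $D^sh(x)=c_{d,s}\,\mathrm{p.v.}\!\int\frac{h(x)-h(y)}{|x-y|^{d+s}}\,dy$, valid for $0<s<1$, to derive (after cancelling the diagonal terms) the kernel representation $[D^s,g]f(x)=c_{d,s}\,\mathrm{p.v.}\!\int\frac{(g(x)-g(y))f(y)}{|x-y|^{d+s}}\,dy$.

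Third, estimate this kernel operator in $L^p$ via a Littlewood--Paley/paraproduct decomposition $gf=\pi_gf+\pi_fg+R(g,f)$, commuting $D^s$ through each piece. The main term $\sum_j[D^s,S_{j-2}g]\,\Delta_jf$ is where the commutator structure is exploited: since $S_{j-2}g$ has frequency $\lesssim 2^{j-2}$ while $\Delta_jf$ is localized at $2^j$, Taylor-expanding the symbol $|\xi|^s-|\eta|^s$ in the low frequency yields a gain, and each summand is bounded by $\|\nabla S_{j-2}g\|_{L^\infty}\,2^{j(s-1)}\,\|\Delta_jf\|_{L^p}\lesssim \|D^sg\|_{L^\infty}\,\|\Delta_jf\|_{L^p}$, using $\|\nabla S_{j-2}g\|_{L^\infty}\lesssim 2^{j(1-s)}\|D^sg\|_{L^\infty}$ (whose convolution kernel lies in $L^1$ precisely because $s<1$); the Littlewood--Paley square function then sums over $j$. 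The remainder pieces $\pi_fg$ and $R(g,f)$ put the derivative on a high-frequency factor of $g$ and use $\|\Delta_kg\|_{L^\infty}\lesssim 2^{-ks}\|D^sg\|_{L^\infty}$ together with the H\"older bound of the first step to resum the tails of $g-S_{j-2}g$.

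I expect the main obstacle in this self-contained route to be precisely this last resummation: the estimate lies at the scaling endpoint, so the geometric series appearing in the remainder terms converge only marginally and the decomposition must be arranged so that no logarithmic loss is incurred, which is exactly where $0<s<1$ and the sharp $\dot{C}^s$-control of $g$ enter in an essential way. By contrast, the reduction to Proposition~\ref{fractionalDeriv} in the first paragraph sidesteps all of this.
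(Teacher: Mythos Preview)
Your reduction is correct: the paper does not give an independent proof of this proposition either, but simply records it as a consequence of the same references \cite{DonLi2019,KPV1993} that yield Proposition~\ref{fractionalDeriv}, and your observation that the choice $s_1=s$, $s_2=0$ (with $0<s<1$, so that only $\beta=0$ survives in the first sum and the second sum is empty) collapses Proposition~\ref{fractionalDeriv} to the commutator bound is exactly the intended specialization. The self-contained Littlewood--Paley sketch you add is not needed here, but it is a faithful outline of how the cited references actually proceed.
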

We also require the following commutator estimate.
\begin{proposition}\label{propcomm1}
For any $0\leq \alpha<1$, $0<\beta\leq 1-\alpha$, $1<p<\infty$, we have
\begin{equation}
\|D^{\alpha}[D^{\beta},g]D^{1-(\alpha+\beta)}f\|_{L^p}\lesssim \|\nabla g\|_{L^{\infty}}\|f\|_{L^p}.
\end{equation}
\end{proposition}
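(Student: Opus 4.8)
The plan is to realize $D^{\alpha}[D^{\beta},g]D^{1-(\alpha+\beta)}$ as a superposition of bilinear Fourier multipliers that depend \emph{linearly} on $\nabla g$, and then run a Littlewood--Paley/paraproduct argument. Writing $c:=1-\alpha-\beta\ge 0$ and applying the fundamental theorem of calculus to the commutator symbol,
\[
|\xi|^{\beta}-|\xi-\eta|^{\beta}=\sum_{l=1}^{d}\eta_l\int_0^1 \beta\,|\xi-(1-t)\eta|^{\beta-2}\,(\xi-(1-t)\eta)_l\,dt ,
\]
one obtains, after setting $\zeta=\xi-\eta$ and $G_l=\partial_{x_l}g$,
\[
D^{\alpha}[D^{\beta},g]D^{c}f=-i\beta\sum_{l=1}^{d}\int_0^1 B^l_t(G_l,f)\,dt ,\qquad \widehat{B^l_t(u,v)}(\xi)=\int \sigma^l_t(\xi,\zeta)\,\widehat u(\xi-\zeta)\,\widehat v(\zeta)\,d\zeta ,
\]
with symbol $\sigma^l_t(\xi,\zeta)=|\xi|^{\alpha}\,|t\xi+(1-t)\zeta|^{\beta-2}\,(t\xi+(1-t)\zeta)_l\,|\zeta|^{c}$. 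Each $\sigma^l_t$ is homogeneous of degree $\alpha+(\beta-2)+1+c=0$ and smooth away from $\{\xi=0\}\cup\{\zeta=0\}\cup\{t\xi+(1-t)\zeta=0\}$; the gain of this reduction is that $\|\nabla g\|_{L^\infty}$ (not $\|g\|_{L^\infty}$) now enters through the slot occupied by $G_l$. It therefore suffices to bound $B^l_t\colon L^\infty\times L^p\to L^p$ with a constant uniform in $t\in[0,1]$ and $l$, since then $\|D^{\alpha}[D^{\beta},g]D^{c}f\|_{L^p}\lesssim \sum_l\int_0^1\|G_l\|_{L^\infty}\|f\|_{L^p}\,dt\lesssim\|\nabla g\|_{L^\infty}\|f\|_{L^p}$.

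To control $B^l_t$ I would decompose both inputs by Littlewood--Paley projections, $G_l=\sum_j\Delta_j G_l$, $f=\sum_k\Delta_k f$, and split the double sum according to the three frequency regimes: (a) the low--high regime $2^j\lesssim 2^k$, where the output is localized to the annulus $|\xi|\sim 2^k$; (b) the resonant regime $2^j\sim 2^k$, where the output may live at any scale $\lesssim 2^j$; and (c) the high--low regime $2^j\gg 2^k$, where the output is localized to $|\xi|\sim 2^j$. In regime (a) the intermediate frequency satisfies $t\xi+(1-t)\zeta\sim 2^k$ for \emph{all} $t$, so the relevant restriction of $\sigma^l_t$ is a classical Coifman--Meyer symbol at scale $2^k$ uniformly in $t$, and the corresponding operators map $L^\infty\times L^p\to L^p$ with uniform constant; summing the dyadic pieces is then handled by the Fefferman--Stein vector-valued maximal inequality and the Littlewood--Paley square function characterization of $L^p$, $1<p<\infty$. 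No cancellation of the commutator is used here; only the degree-$0$ homogeneity. Equivalently, regime (a) can be phrased as a ``$g$ low-frequency'' commutator: estimating the kernel of $[D^{\beta},S_{k-3}g]$ via $|S_{k-3}g(x)-S_{k-3}g(y)|\le|x-y|\,\|\nabla g\|_{L^\infty}$ together with $\|D^{\beta}S_{k-3}g\|_{L^\infty}\lesssim\|\nabla g\|_{L^\infty}$-type estimates gives $\|D^{\alpha}[D^{\beta},S_{k-3}g]D^{c}\Delta_k f\|_{L^p}\lesssim\|\nabla g\|_{L^\infty}\|\Delta_k f\|_{L^p}$, which sums by the square function.

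The delicate part is regimes (b) and (c). In (c) the intermediate frequency $t\xi+(1-t)\zeta$ can be as small as $2^k\ll 2^j$ (for $t$ in a short interval near $t\sim 2^{k-j}$), so I would not try to make $\sigma^l_t$ a Coifman--Meyer symbol there; instead I would estimate the block $[D^{\beta},\Delta_j g]D^{c}\Delta_k f$ directly, using Proposition \ref{commutatorestim1} together with the Bernstein-type bound $\|D^{\beta}\Delta_j g\|_{L^\infty}\lesssim 2^{-j(1-\beta)}\|\nabla g\|_{L^\infty}$, which yields a factor $2^{-j(1-\beta)}2^{kc}\|\nabla g\|_{L^\infty}\|\Delta_k f\|_{L^p}$; applying the outer $D^{\alpha}$ (output at $|\xi|\sim 2^j$) produces the net gain $2^{-(j-k)c}\|\nabla g\|_{L^\infty}\|\Delta_k f\|_{L^p}$, and when $c>0$ the geometric summation in $j-k$ closes. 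Regime (b) is the genuine obstacle: the output frequency can be much smaller than the input frequency and $\sigma^l_t$ is merely bounded, so the commutator's cancellation must be exploited. The mechanism is that $|\xi|^{\beta}-|\xi-\eta|^{\beta}=O(|\eta|\,|\xi|^{\beta-1})$ for $|\eta|\ll|\xi|$, which together with the outer $D^{\alpha}$ gives each resonant block at output scale $2^{j'}\le 2^j$ an extra factor $2^{-(j-j')\alpha}$; summing over $j'$ and $j$ is then an $\ell^1$-convolution in the dyadic indices followed by Fefferman--Stein and the square function. The remaining endpoints are $\alpha=0$ (hence $\beta=1-c$), where this gain is absent: there one uses Proposition \ref{commutatorestim1} directly on the resonant block and the bound $\|D^{\beta}\Delta_j g\|_{L^\infty}\lesssim 2^{-j(1-\beta)}\|\nabla g\|_{L^\infty}$ to recover the decay; and $\beta=1$ (so $\alpha=c=0$), where Proposition \ref{commutatorestim1} does not apply — there I would write $D=\sum_l\mathcal R_l\partial_{x_l}$, so that $[D,\Delta_j g]=\sum_l\big(\mathcal R_l M_{\partial_{x_l}\Delta_j g}+[\mathcal R_l,\Delta_j g]\partial_{x_l}\big)$, and invoke Proposition \ref{propconmu} together with Lemma \ref{lemmaRieszdeco} (and Proposition \ref{CalderonComGU} when $d=1$) to get $\|[D,\Delta_j g]w\|_{L^p}\lesssim\|\nabla g\|_{L^\infty}\|w\|_{L^p}$.

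I expect regime (b) to be the main difficulty: one must extract exactly the right amount of cancellation from $|\xi|^{\beta}-|\xi-\eta|^{\beta}$ uniformly in the auxiliary parameter $t$ (the intermediate frequency $t\xi+(1-t)\zeta$ degenerates on a thin set of directions, which is precisely why the Coifman--Meyer theorem cannot be applied to $\sigma^l_t$ globally), and then sum the dyadic contributions without incurring an $\ell^1$-versus-$\ell^2$ loss. As a consistency check, for the affine weight $g(x)=x_1$ one computes directly that $D^{\alpha}[D^{\beta},x_1]D^{c}f=\beta\,\mathcal R_1 f$, which indeed satisfies $\|D^{\alpha}[D^{\beta},x_1]D^{c}f\|_{L^p}\lesssim\|f\|_{L^p}=\|\nabla g\|_{L^\infty}\|f\|_{L^p}$. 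Finally, we note that the whole statement can also be read off from the fact that $D^{\alpha}[D^{\beta},g]D^{1-(\alpha+\beta)}$ is a Calder\'on commutator, hence a Calder\'on--Zygmund operator whose kernel constants are controlled by $\|\nabla g\|_{L^\infty}$, so that $L^p$-boundedness for $1<p<\infty$ follows from Calder\'on--Zygmund theory; the paraproduct argument above is an explicit implementation of this fact.
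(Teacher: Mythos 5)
First, note that the paper does not prove this proposition at all: it is quoted as a known result, citing \cite[Proposition 3.10]{DonLi2019}, so your attempt is being measured against the literature rather than against an argument in the text. Your reduction via the fundamental theorem of calculus to degree-zero bilinear symbols with $\nabla g$ in one slot is sound, and the non-endpoint regimes you describe (low--high; high--low with $c>0$; resonant with $\alpha>0$) can indeed be closed along the lines you sketch. But the statement includes exactly the diagonal cases where your dyadic gains vanish, and there your argument has genuine gaps. In regime (c) you only claim convergence ``when $c>0$''; the proposition allows $\beta=1-\alpha$, i.e.\ $c=0$, and then the factor $2^{-(j-k)c}$ is identically $1$: for fixed output scale $2^{j}$ the sum over $k<j$ is essentially $D^{\alpha}[D^{\beta},\Delta_j g]S_{j-3}f$, whose blocks are uniformly of size $\|\nabla g\|_{L^\infty}\|f\|_{L^p}$ with no decay, and the square-function summation does not go through pointwise (the offending piece is a paraproduct of the form $\sum_j \Delta_j(\partial_{l}g)\,S_{j-3}f$, which needs a BMO-type paraproduct bound or a genuine Calder\'on-commutator input, not Fefferman--Stein alone). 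Similarly, your endpoint fix for $\alpha=0$ in the resonant regime does not produce any decay: Proposition \ref{commutatorestim1} together with $\|D^{\beta}\Delta_j g\|_{L^\infty}\lesssim 2^{-j(1-\beta)}\|\nabla g\|_{L^\infty}$ and $\|D^{c}\Delta_{k}f\|_{L^p}\lesssim 2^{kc}\|\Delta_k f\|_{L^p}$ with $k\sim j$ gives a block bound $2^{-j(1-\beta-c)}\|\nabla g\|_{L^\infty}\|\Delta_j f\|_{L^p}=2^{-j\alpha}\|\nabla g\|_{L^\infty}\|\Delta_j f\|_{L^p}$, which at $\alpha=0$ is exactly $O(\|\Delta_j f\|_{L^p})$ per block; summing these $L^p$ norms only controls a Besov $B^{0}_{p,1}$ quantity, not $\|f\|_{L^p}$. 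This diagonal part at $\alpha=0$ (the operator $[D^{\beta},g]D^{1-\beta}$) is precisely the hard core of the Calder\'on commutator theorem and requires a different mechanism ($T1$/Carleson-measure arguments, or the known first-order Calder\'on commutator bounds), not blockwise application of Proposition \ref{commutatorestim1}.

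Two further points. Your stated cancellation mechanism in regime (b), ``$|\xi|^{\beta}-|\xi-\eta|^{\beta}=O(|\eta||\xi|^{\beta-1})$ for $|\eta|\ll|\xi|$,'' misdescribes the resonant geometry (there $|\eta|\sim|\zeta|\gtrsim|\xi|$); the gain $2^{-(j-j')\alpha}$ you compute is real, but it comes from the outer factor $|\xi|^{\alpha}$ on a low-frequency output, not from the commutator difference, so the endpoint $\alpha=0$ cannot be rescued by ``the same cancellation.'' Finally, the closing remark that the bound ``follows from Calder\'on--Zygmund theory since the operator is a Calder\'on commutator'' is circular as written: the kernel estimates with constant $\|\nabla g\|_{L^\infty}$ are routine, but the required $L^{2}$ (or some $L^{p_0}$) bound with that constant is the substance of the theorem, which is exactly what \cite[Proposition 3.10]{DonLi2019} (or Calder\'on's original theorem and its $T1$-type descendants) supplies. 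In short: correct skeleton away from the diagonal, but the cases $c=0$ and $\alpha=0$ are not handled by the argument as proposed, and they are part of the claimed range.
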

Proposition \ref{propcomm1} is proved in \cite[Proposition 3.10]{DonLi2019}.


\subsection{Preliminaries fractional derivatives estimates}

In this part, we introduce some key estimates dealing with fractional derivatives.

$L^{p}_{s}$ denotes the Sobolev space determined by $L^{p}_{s}:=(1-\Delta)^{-\frac{s}{2}}L^{p}(\R^d)$. This space can be characterized by the Stein derivative according to the following theorem.
\begin{theorem}\label{stein}
	Let $b\in (0,1)$ and $\frac{2d}{(d+2b)}<p<\infty.$ Then $f\in L^{p}_{b}(\R^{d})$ if and only if
	\begin{itemize}
		\item [a)] $f\in L^{p}(\R^{d}),$
		\item [b)]
		$\mathcal{D}^{b}f(x):={\displaystyle \left (
			\int_{\R^{d}}\frac{|f(x)-f(y)|^{2}}{|x-y|^{d+2b}}dy\right)^{1/2}} \in
		L^{p}(\R^{d}),$ with
		\begin{equation}\label{equiv}
		\|f\|_{b,p}:=\|J^{b}f\|_{p}\thicksim \|f\|_{p}+\|D^{b}f\|_{p}\thicksim \|f\|_{p}+\|\mathcal{D}^{b}f\|_{p}.
		\end{equation}
	\end{itemize}
\end{theorem}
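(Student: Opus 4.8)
The plan is to reduce the whole statement to one square-function comparison and then invoke classical harmonic analysis. By density one may work with $f\in\mathcal S(\mathbb{R}^d)$ throughout and extend the equivalences to the full spaces afterwards by a routine approximation argument (so that, a posteriori, $\{f\in L^p:\mathcal D^b f\in L^p\}$ is identified with $L^p_b$). The first equivalence $\|J^b f\|_p\thicksim\|f\|_p+\|D^b f\|_p$ is standard: the Fourier multipliers $\langle\xi\rangle^b(1+|\xi|^b)^{-1}$ and $(1+|\xi|^b)\langle\xi\rangle^{-b}$ are $C^\infty$ off the origin and satisfy the Mikhlin--H\"ormander bounds $|\partial_\xi^\gamma m(\xi)|\lesssim|\xi|^{-|\gamma|}$, hence are bounded on $L^p$ for $1<p<\infty$; in the range $p\le 1$ (which can only occur when $d=1$) one uses instead their boundedness on the local Hardy space $h^p$ together with $\|g\|_{L^p_b}\thicksim\|J^b g\|_{h^p}$. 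The heart of the matter is therefore
\[
\|D^b f\|_p\ \thicksim\ \|f\|_p+\|\mathcal D^b f\|_p .
\]

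For the model case $p=2$, Plancherel's theorem gives this at once: integrating $\mathcal D^b f(x)^2=\int_{\mathbb{R}^d}|y|^{-(d+2b)}|f(x)-f(x-y)|^2\,dy$ in $x$ and using $\widehat{f(\cdot)-f(\cdot-y)}(\xi)=(1-e^{-iy\cdot\xi})\widehat f(\xi)$,
\[
\|\mathcal D^b f\|_{L^2}^2=\int_{\mathbb{R}^d}|\widehat f(\xi)|^2\Big(\int_{\mathbb{R}^d}\frac{|1-e^{-iy\cdot\xi}|^2}{|y|^{d+2b}}\,dy\Big)d\xi=c_{d,b}\,\|D^b f\|_{L^2}^2,
\]
the inner integral being a finite constant times $|\xi|^{2b}$ by homogeneity and rotational symmetry, finiteness being exactly the condition $0<b<1$. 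For general $p$ I would pass through the Poisson semigroup $u(x,t)=(P_t\ast f)(x)$ and the "vertical" square function $G_b f(x)=\big(\int_0^\infty t^{-2b}|f(x)-u(x,t)|^2\,\tfrac{dt}{t}\big)^{1/2}$; the same Plancherel computation (now with $\int_0^\infty t^{-2b-1}|1-e^{-t|\xi|}|^2\,dt=c_b|\xi|^{2b}$) yields $\|G_b f\|_{L^2}\thicksim\|D^b f\|_{L^2}$, and fractional Littlewood--Paley--Stein theory upgrades this to $\|G_b f\|_p\thicksim\|f\|_p+\|D^b f\|_p$ on the stated range of $p$. It then remains only to compare the two square functions $\mathcal D^b f$ (spatial differences) and $G_b f$ (harmonic extension) in $L^p$: for $\|\mathcal D^b f\|_p\lesssim\|J^b f\|_p$ one writes $g=J^b f$ and views $g\mapsto\big(y\mapsto|y|^{-(d/2+b)}(J^{-b}g(\cdot)-J^{-b}g(\cdot-y))\big)$ as an $L^2(\mathbb{R}^d_y,|y|^{-(d+2b)}dy)$-valued Calder\'on--Zygmund operator whose kernel satisfies the H\"ormander condition with values in that Hilbert space (the required size bound being $\mathcal D^b G_b(w)\lesssim|w|^{-d}$, with $G_b$ the Bessel kernel and $0<b<d$ always), so the Benedek--Calder\'on--Panzone vector-valued theorem and its $h^p$ analogue give $L^p$-boundedness; the reverse comparison is obtained by a good-$\lambda$/sharp-maximal-function argument bounding $G_b f$ by $\mathcal D^b f$ plus the Hardy--Littlewood maximal function.

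\textbf{Main obstacle.} The genuinely hard inequality is $\|D^b f\|_p\lesssim\|f\|_p+\|\mathcal D^b f\|_p$, i.e.\ reconstructing the Riesz potential from its intrinsic square function. The naive route via the subordination identity $D^b f=c_b\int_0^\infty t^{-1-b}(f-u)\,dt$ combined with the pointwise bound $|f(x)-u(x,t)|\lesssim t^{b}\,\mathcal D^b f(x)$ (from Cauchy--Schwarz in $f(x)-u(x,t)=\int P_t(y)(f(x)-f(x-y))\,dy$, valid precisely for $b<1$) fails, since $\int_0^\infty t^{-1}\,dt$ diverges at both ends; the cancellation encoded in the square function must be exploited instead, and it is exactly in making the underlying maximal-function and local Hardy space estimates work that the hypothesis $p>\frac{2d}{d+2b}$ is consumed: for $d\ge2$ it simply forces $p>1$ and the argument stays within classical $L^p$ Calder\'on--Zygmund theory, whereas for $d=1$ with $\tfrac12\le b<1$ it permits $p\le1$ and one must operate in the $h^p$ framework. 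Once the equivalence $\|D^b f\|_p\thicksim\|f\|_p+\|\mathcal D^b f\|_p$ holds on $\mathcal S$, the "if and only if" characterization of $L^p_b$ and the full chain \eqref{equiv} follow by density.
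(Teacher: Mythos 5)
First, note that the paper does not actually prove this statement: it is quoted as a classical result and the ``proof'' is a citation to Stein's 1961 paper, so your sketch is being measured against the classical argument rather than anything in the text. Your outline is broadly in the spirit of that classical proof (Poisson semigroup, square functions, Littlewood--Paley--Stein theory), but it contains a concrete error in the direction $\|\mathcal{D}^b f\|_p\lesssim\|J^b f\|_p$. The claimed kernel size bound $\mathcal{D}^b G_b(w)\lesssim |w|^{-d}$ for the Bessel kernel is false whenever $2b\le d$ (so in particular for every $d\ge 2$): since $G_b(z)\sim |z|^{b-d}$ near $z=0$, the $L^2\big(|y|^{-(d+2b)}dy\big)$-norm of $y\mapsto G_b(w)-G_b(w-y)$ contains the contribution $\int_{|w-y|<\epsilon}|w-y|^{2(b-d)}\,dy$, which diverges unless $2b>d$; the same local non-square-integrability destroys the vector-valued H\"ormander condition, so the Benedek--Calder\'on--Panzone route cannot be run as stated. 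This is not a repairable technicality: the inequality $\|\mathcal{D}^b f\|_p\lesssim\|J^b f\|_p$ is simply false for $1<p\le \frac{2d}{d+2b}$, because for any nonzero $f$ supported in the unit ball one has $\mathcal{D}^b f(x)\gtrsim |x|^{-(d+2b)/2}$ for $|x|$ large, which fails to lie in $L^p$ exactly in that range. Consequently your accounting of where the hypothesis $p>\frac{2d}{d+2b}$ is consumed is wrong: for $d\ge 2$ it is strictly stronger than $p>1$ and it is needed precisely in this ``easy-looking'' direction. In Stein's argument this is where the $g_\lambda^{*}$-function enters, via a pointwise domination of $\mathcal{D}^b f$ with $\lambda=1+\frac{2b}{d}$, whose $L^p$-boundedness holds exactly for $p>2/\lambda=\frac{2d}{d+2b}$.

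The other half of your plan is closer to workable but is only gestured at: the key comparison bounding the vertical square function $G_b f$ by $\mathcal{D}^b f$ is asserted via a good-$\lambda$/sharp-function argument without any detail, and the ``fractional Littlewood--Paley--Stein theory'' giving $\|D^b f\|_p\lesssim \|f\|_p+\|G_b f\|_p$ is invoked as a black box that is itself a substantial part of the classical proof. (For the first of these, a direct argument exists: decompose $f(x)-u(x,t)=\int P_t(y)(f(x)-f(x-y))\,dy$ dyadically in $|y|\sim 2^jt$, apply Cauchy--Schwarz on each annulus, and sum using $b<1$; this yields the pointwise bound $G_b f\lesssim \mathcal{D}^b f$ without any maximal function, so that piece is salvageable.) As it stands, however, the proposal both relies on an unproved core estimate and, in the CZ step, argues toward a statement that is false in part of the range it claims, so it does not constitute a proof of the theorem.
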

\begin{proof}
	See \cite[Theorem 1]{Stein1961}.
\end{proof}
Let us now present some consequences of the above theorem. When $p=2$, we have the following product estimate (see \cite[Proposition 1]{NahasPonce2009})
\begin{equation}\label{Leibaniso}
\|\mathcal{D}^{b}(fg)\|_{L^2} \leq \|f\mathcal{D}^{b}g\|_{L^2} + \|g\mathcal{D}^{b}f\|_{L^2},
\end{equation}
and the $L^{\infty}$ estimates
\begin{equation}\label{gradaniso}
\|\mathcal{D}^{b}h\|_{L^\infty} \les (\|h\|_{L^{\infty}}+\|\p_{x_j} h\|_{L^\infty}),
\end{equation}
We also have that
\begin{equation}\label{propereq1}
\|\mathcal{D}^bf\|_{L^2}\sim \|D^bf \|_{L^2}.    
\end{equation}
As a further consequence of Theorem \ref{stein}, we have the following interpolation result.
\begin{lemma}\label{interaniso}
Let $a,b>0$. Assume that $\langle x\rangle^b f \in L^{2}(\R^d)$ and $J^{a}f\in L^{2}(\R^d)$. Then for any
$\theta \in [0,1]$
\begin{equation}
\|J^{\theta a}(\langle {x}\rangle^{(1-\theta)b}f)\|_{L^2}\lesssim \|\langle x\rangle^b f\|_{L^2}^{1-\theta}\|J^{a}f\|_{L^2}^{\theta}.
\end{equation}

\end{lemma}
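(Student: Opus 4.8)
The plan is to establish the inequality by interpolation in Fourier space, treating the weight $\langle x\rangle^b$ as (essentially) a Fourier multiplier of order $b$ acting on $\widehat{f}$. First I would set up the two endpoint operators: for $\theta=0$ the claim is the trivial bound $\|\langle x\rangle^b f\|_{L^2}\le \|\langle x\rangle^b f\|_{L^2}$, and for $\theta=1$ it is $\|J^a f\|_{L^2}\le \|J^a f\|_{L^2}$. The content is the intermediate range. The natural tool is a three-lines / Stein complex interpolation argument, or equivalently the abstract interpolation of the analytic family $T_z=J^{za}\langle x\rangle^{(1-z)b}\langle x\rangle^{-b}$ acting between $L^2$ and $L^2$; by Plancherel this is the same as interpolating the multiplier-type operators $\langle \xi\rangle^{za}$ composed with the operator that sends $g\mapsto \langle x\rangle^{(1-z)b}\langle x\rangle^{-b} g$, which on the Fourier side is convolution by the (tempered, analytic in $z$) kernel associated to $\langle x\rangle^{-zb}$. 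The key structural fact is that $\langle x\rangle^{-zb}$ is, for $\mathrm{Re}\, z\in[0,1]$, a symbol whose associated operator is bounded on $L^2$ with norm growing at most polynomially in $|\mathrm{Im}\, z|$, which is exactly what Stein interpolation requires.

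Concretely, the cleanest route I would take: define $F=\langle x\rangle^b f$ and $G=J^a f$, so $f=\langle x\rangle^{-b}F$ and simultaneously $f=J^{-a}G$; then $\langle x\rangle^{(1-\theta)b} f = \langle x\rangle^{-\theta b}F$ and $J^{\theta a}f = J^{\theta a - a}G = J^{-(1-\theta)a}G$. One then wants
\begin{equation*}
\|J^{\theta a}(\langle x\rangle^{-\theta b}F)\|_{L^2}\lesssim \|F\|_{L^2}^{1-\theta}\|G\|_{L^2}^{\theta},
\end{equation*}
and since $\|F\|_{L^2}=\|\langle x\rangle^b f\|_{L^2}$ and, using $F=\langle x\rangle^b J^{-a}G$, one has a relation allowing the right side to be recognized. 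Rather than juggling both representations, I would instead apply Stein's interpolation theorem directly to the analytic family
\begin{equation*}
T_z h := J^{za}\big(\langle x\rangle^{-zb} h\big),\qquad 0\le \mathrm{Re}\,z\le 1,
\end{equation*}
viewed as operators on a dense subspace (say Schwartz functions), and estimate $\|T_{iy}h\|_{L^2}\lesssim \langle y\rangle^{N}\|h\|_{L^2}$ and $\|T_{1+iy}h\|_{L^2}\lesssim \langle y\rangle^{N}\|J^a h\|_{L^2}$; applying this with $h=F=\langle x\rangle^b f$ at $z=\theta$ and noting $T_\theta F = J^{\theta a}(\langle x\rangle^{(1-\theta)b} f)$, while $J^a F = J^a \langle x\rangle^b f$ needs to be controlled by $\|J^a f\|_{L^2}+\|\langle x\rangle^b f\|_{L^2}$ — which is the hypothesis — gives the result after absorbing the harmless polynomial factors in $y$ into the interpolation (they disappear because Stein's theorem only needs admissible, sub-exponential growth on the lines).

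The main obstacle, and where I would spend the real effort, is the boundary estimate on the line $\mathrm{Re}\,z=1$: controlling $\|J^{(1+iy)a}(\langle x\rangle^{-(1+iy)b}h)\|_{L^2}$ by $\langle y\rangle^N\|J^a h\|_{L^2}$. This requires commuting $J^{a}$ (a fractional derivative of order $a$, with $a$ possibly larger than $1$) past the weight $\langle x\rangle^{-(1+iy)b}$, i.e. a fractional Leibniz / commutator estimate of the type in \eqref{fLR} and Proposition \ref{fractionalDeriv}, together with the observation that every derivative of $\langle x\rangle^{-(1+iy)b}$ up to the needed order is bounded (with at most polynomial dependence on $y$). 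Here Theorem \ref{stein} and the Stein-derivative characterization \eqref{equiv} are useful when $a\in(0,1)$, reducing matters to the pointwise square-function $\mathcal{D}^{a}$ and the product rule \eqref{Leibaniso}; for $a\ge 1$ one splits $a = \lfloor a\rfloor + \{a\}$, distributes integer derivatives by the ordinary Leibniz rule, and handles the remaining fractional part $\{a\}\in(0,1)$ by \eqref{Leibaniso} again, each time paying only bounded (in $x$) and polynomial (in $y$) weight factors. Once both boundary lines are in hand, Stein interpolation closes the argument immediately.
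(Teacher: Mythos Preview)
The paper does not supply an argument here; it simply refers to \cite[Lemma~1]{FonPO}, where the proof is exactly the three-lines/complex interpolation scheme you outline. So your overall route is the standard one.

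There is, however, a real gap in how you close the $\mathrm{Re}\,z=1$ endpoint. You aim for the operator bound $\|T_{1+iy}h\|_{L^2}\lesssim\langle y\rangle^{N}\|J^{a}h\|_{L^2}$, specialize to $h=F=\langle x\rangle^{b}f$, and then assert that $\|J^{a}(\langle x\rangle^{b}f)\|_{L^2}$ is controlled by $\|J^{a}f\|_{L^2}+\|\langle x\rangle^{b}f\|_{L^2}$. That last step is false: with $d=1$, $a=1$, $b=2$ and $f(x)=e^{ix^{2}}\langle x\rangle^{-3}$ one has $f\in H^{1}$ and $\langle x\rangle^{2}f\in L^{2}$, yet $\partial_{x}(\langle x\rangle^{2}f)\notin L^{2}$. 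Even if it held, routing through $\|J^{a}F\|$ would only deliver the additive bound $\|\langle x\rangle^{b}f\|^{1-\theta}(\|J^{a}f\|+\|\langle x\rangle^{b}f\|)^{\theta}$, not the stated multiplicative one.

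The fix is already in your hands. For the specific $h=\langle x\rangle^{b}f$ one has $\langle x\rangle^{-(1+iy)b}h=\langle x\rangle^{-iyb}f$, so on $\mathrm{Re}\,z=1$ the quantity is $\|J^{a}(\langle x\rangle^{-iyb}f)\|_{L^2}$, and your Leibniz/commutator sketch (via \eqref{Leibaniso}, \eqref{gradaniso}, splitting $a=\lfloor a\rfloor+\{a\}$) bounds this \emph{directly} by $\langle y\rangle^{N}\|J^{a}f\|_{L^2}$, with no detour through $\|J^{a}(\langle x\rangle^{b}f)\|$. Then apply the three-lines lemma to the scalar function $\Phi(z)=\int \overline{g}\,J^{za}\big(\langle x\rangle^{(1-z)b}f\big)\,dx$ for fixed $f$ and a unit $g\in L^{2}$: the bounds $|\Phi(iy)|\le\|\langle x\rangle^{b}f\|_{L^2}$ and $|\Phi(1+iy)|\lesssim\langle y\rangle^{N}\|J^{a}f\|_{L^2}$ yield the multiplicative conclusion immediately.
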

\begin{proof}
The proof can be consulted in \cite[Lemma 1]{FonPO}.
\end{proof}

\begin{lemma}\label{decaylinearexp}
Let $0<b<1$, $0<a<1$, then
\begin{equation*}
    \mathcal{D}^b\big(e^{itx_1|x|^a}\big)(\xi)\lesssim \langle t \rangle^{b}\langle\xi\rangle^{ab}.
\end{equation*}
\end{lemma}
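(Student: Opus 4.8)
The plan is to estimate the Stein derivative $\mathcal{D}^b$ of $g(\xi) := e^{it\xi_1|\xi|^a}$ directly from its definition
\[
\mathcal{D}^b g(\xi) = \left(\int_{\mathbb{R}^d} \frac{|g(\xi)-g(\eta)|^2}{|\xi-\eta|^{d+2b}}\, d\eta\right)^{1/2},
\]
splitting the integral into the near region $|\xi-\eta|\le 1$ and the far region $|\xi-\eta|> 1$. In the far region one simply uses $|g(\xi)-g(\eta)|\le 2$ and integrates $|\xi-\eta|^{-d-2b}$ over $|\xi-\eta|>1$, which converges and gives a bounded contribution (hence $\lesssim \langle t\rangle^b\langle\xi\rangle^{ab}$ trivially). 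The work is in the near region, where we exploit the mean value bound $|g(\xi)-g(\eta)|\le |\xi-\eta|\,\sup_{[\xi,\eta]}|\nabla g|$ together with the trivial bound $|g(\xi)-g(\eta)|\le 2$, and interpolate between them: for any $\theta\in[0,1]$,
\[
|g(\xi)-g(\eta)| \lesssim |\xi-\eta|^{\theta}\Big(\sup_{\zeta\in[\xi,\eta]}|\nabla g(\zeta)|\Big)^{\theta}.
\]
Choosing $\theta$ slightly larger than $b$ (so that $\int_{|\xi-\eta|\le 1}|\xi-\eta|^{2\theta-d-2b}\,d\eta<\infty$) reduces everything to controlling $\sup_{\zeta\in[\xi,\eta]}|\nabla g(\zeta)|$ for $|\xi-\eta|\le 1$.

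The key computation is the gradient bound. Since $\partial_{\xi_j}(t\xi_1|\xi|^a) = t\,\delta_{1j}|\xi|^a + t\,a\,\xi_1|\xi|^{a-2}\xi_j$, we get $|\nabla g(\zeta)| \le |t|\big(|\zeta|^a + a|\zeta|^{a-1}\big) \lesssim |t|\langle \zeta\rangle^a$, using $0<a<1$ so that $|\zeta|^{a-1}\lesssim |\zeta|^a$ fails near the origin — here one instead notes $a|\zeta|^{a-1}$ is the troublesome term, but $|\zeta|^{a-1}$ is integrable-in-the-sense-needed is not the issue since we only need an $L^\infty$-type bound in $\zeta$; more carefully, for $|\zeta|\le 1$ we have the harmless bound coming from the fact that $|\nabla g|$, while possibly singular, still yields $|g(\xi)-g(\eta)|\le 2$, so in the regime where $|\nabla g|$ is large we use the trivial bound and where it is $\lesssim |t|\langle\zeta\rangle^a$ we use the MVT. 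Thus for $\zeta$ in the segment joining $\xi$ and $\eta$ with $|\xi-\eta|\le 1$ we have $\langle\zeta\rangle\lesssim\langle\xi\rangle$, hence $\sup_{\zeta\in[\xi,\eta]}|\nabla g(\zeta)|\lesssim |t|\langle\xi\rangle^a$ away from a neighborhood of the origin, and the origin contributes only a bounded amount. Plugging this in,
\[
\Big(\int_{|\xi-\eta|\le 1}\frac{|g(\xi)-g(\eta)|^2}{|\xi-\eta|^{d+2b}}\,d\eta\Big)^{1/2} \lesssim \big(|t|\langle\xi\rangle^a\big)^{\theta}\Big(\int_{|\xi-\eta|\le 1}|\xi-\eta|^{2\theta-d-2b}\,d\eta\Big)^{1/2} \lesssim (|t|\langle\xi\rangle^a)^{\theta}.
\]
Finally one optimizes: combining the near-region bound $\lesssim (|t|\langle\xi\rangle^a)^\theta$ (valid for $\theta>b$, and by a limiting/covering argument also usable down to $\theta=b$ after absorbing the divergent constant into the far-region split, i.e. split the near region further into dyadic annuli $2^{-j-1}<|\xi-\eta|\le 2^{-j}$ and sum a geometric series with ratio governed by $\min(1, |t|\langle\xi\rangle^a 2^{-j})$) with the bounded far-region bound gives $\mathcal{D}^b g(\xi)\lesssim \langle |t|\langle\xi\rangle^a\rangle^b \lesssim \langle t\rangle^b\langle\xi\rangle^{ab}$, as claimed.

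The main obstacle I anticipate is handling the interplay near the origin $\xi=0$, where $\nabla g$ has a mild singularity (the $|\zeta|^{a-1}$ term with $a<1$) so the naive "MVT $\times$ trivial bound" interpolation must be organized carefully — the cleanest fix is the dyadic decomposition of the annulus $|\xi-\eta|\le 1$ described above, on each piece using whichever of the two pointwise bounds $|g(\xi)-g(\eta)|\le\min(2,\,|\xi-\eta|\sup|\nabla g|)$ is smaller, and then summing. This simultaneously resolves the endpoint issue $\theta=b$ and the origin singularity, and produces exactly the gain $\langle t\rangle^b\langle\xi\rangle^{ab}$ rather than a larger power.
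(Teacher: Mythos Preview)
Your approach is essentially the one the paper has in mind: its proof simply refers to \cite[Proposition 2]{NahasPonce2009}, whose argument is precisely the ``split into near/far regions and interpolate between the trivial bound $|g(\xi)-g(\eta)|\le 2$ and the mean-value bound'' that you outline, followed by a dyadic summation over $|\xi-\eta|$ to hit the exponent $b$ exactly.

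One point to clean up: the phase gradient is actually nonsingular at the origin, so the obstacle you anticipate is not there. From $\partial_{\xi_j}(\xi_1|\xi|^a)=\delta_{1j}|\xi|^a+a\,\xi_1\xi_j|\xi|^{a-2}$ one gets
\[
|\nabla(\xi_1|\xi|^a)|\lesssim |\xi|^a,
\]
since $|\xi_1\xi_j|\le|\xi|^2$ makes the second term bounded by $a|\xi|^a$, not $a|\xi|^{a-1}$. Hence for $|\xi-\eta|\le 1$ and $\zeta$ on the segment $[\xi,\eta]$ one has $|\nabla g(\zeta)|\lesssim |t|\langle\xi\rangle^a$ uniformly, with no blow-up near $\zeta=0$. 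With this correction your argument goes through cleanly: on the near region use $|g(\xi)-g(\eta)|\le\min\big(2,\,|t|\langle\xi\rangle^a|\xi-\eta|\big)$, split the integral at the scale $|\xi-\eta|\sim (|t|\langle\xi\rangle^a)^{-1}$ (or sum dyadically), and combine with the bounded far-region contribution to obtain $\mathcal{D}^b g(\xi)\lesssim\langle t\rangle^b\langle\xi\rangle^{ab}$.
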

\begin{proof}
The proof follows similar arguments in \cite[Proposition 2]{NahasPonce2009}.
\end{proof}

\begin{lemma}\label{lemmafracderivpolyno}
Let $k_1,k_2, l\geq 0$ be integers, $0<a<2$, and $P_{k_2+l}(\xi)$ be a homogeneous polynomial of order $k_2+l$.

\begin{itemize}
\item[i)] Assume   
\begin{equation}\label{propert1}
 -\frac{d}{2}<al+l+k_2-2k_1<0.   
\end{equation}
Then it follows
\begin{equation*}
\mathcal{D}^b\big(|\xi|^{-2k_1}|\xi|^{al}P_{k_2+l}(\xi)\big)(x)\lesssim |x|^{al+l+k_2-2k_1-b},    
\end{equation*}
for all $x\neq 0$.

\item[ii)] Let $\phi\in C^{\infty}_c(\mathbb{R}^d)$ with $\phi(\xi)=1$, whenever $|\xi|\leq 1$. Assume
\begin{equation}\label{propert2}
 al+l+k_2-2k_1\geq 0.   
\end{equation}
Then it follows
\begin{equation*}
\begin{aligned}
\mathcal{D}^b\big(|\xi|^{-2k_1}&|\xi|^{al} P_{k_2+l}(\xi)\phi(\xi)\big)(x)\\
&\lesssim 
\left\{\begin{aligned}
& 1+|x|^{b}+(-\log(|x|))^{\frac{1}{2}},\hspace{1cm} \text{ if } \, \, al+l+k_2-2k_1-b=0 \, \, \text{and } |x|\ll 1,\\
&1+|x|^{al+l+k_2-2k_1}+|x|^{al+l+k_2-2k_1-b},\hspace{0.3cm} \text{ otherwise},   
\end{aligned}\right.
\end{aligned}
\end{equation*}
for all $x\neq 0$.
\end{itemize}
\end{lemma}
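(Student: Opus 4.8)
The plan is to estimate the Stein derivative $\mathcal{D}^b g$ of the function $g(\xi)=|\xi|^{-2k_1+al}P_{k_2+l}(\xi)$ (possibly localized by $\phi$) directly from its defining integral, exploiting the homogeneity of $g$ on small scales. The key observation is that away from the origin $g$ is smooth, and near the origin $g$ is homogeneous of degree $m:=al+l+k_2-2k_1$ (in case (i)) with $-d/2<m<0$, so that $g\in L^2_{\mathrm{loc}}$ and the singularity is integrable in the sense required by $\mathcal{D}^b$. First I would split, for fixed $x\neq 0$, the integral $\int_{\mathbb{R}^d} |g(x)-g(y)|^2|x-y|^{-d-2b}\,dy$ into the three standard regions $|y-x|\leq |x|/2$, $|y|\leq |x|/2$, and the complementary far region. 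On the near-diagonal piece one uses the mean value theorem together with the bound $|\nabla g(y)|\lesssim |y|^{m-1}\sim |x|^{m-1}$ (valid since $|y|\sim |x|$ there), giving a contribution $\lesssim |x|^{2(m-1)}\int_{|y-x|\leq |x|/2}|x-y|^{2-d-2b}\,dy\lesssim |x|^{2(m-1)}|x|^{2-2b}=|x|^{2(m-b)}$, as desired. On the region $|y|\leq |x|/2$ one has $|x-y|\sim |x|$ and bounds $|g(x)-g(y)|^2\lesssim |x|^{2m}+|g(y)|^2$, so the piece is $\lesssim |x|^{-d-2b}\big(|x|^{2m}\cdot|x|^d+\int_{|y|\leq|x|/2}|y|^{2m}\,dy\big)$; the integral $\int_{|y|\le|x|/2}|y|^{2m}dy$ converges exactly because $2m>-d$, i.e.\ by hypothesis \eqref{propert1}, and yields $\lesssim |x|^{2m+d}$, so this region also contributes $\lesssim |x|^{2(m-b)}$. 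The far region $|y|\geq |x|/2$, $|y-x|\geq |x|/2$ is handled by $|g(x)-g(y)|^2\lesssim |x|^{2m}+|y|^{2m}$ and $|x-y|\gtrsim |y|$ there, so it is $\lesssim |x|^{2m}\int_{|y|\gtrsim|x|}|y|^{-d-2b}dy+\int_{|y|\gtrsim|x|}|y|^{2m-d-2b}dy\lesssim |x|^{2(m-b)}$, using $m<0$ so that $2m-2b<0$ makes the last integral converge at infinity. Taking the square root gives part (i).

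For part (ii) the function is now cut off by $\phi$, so on the far region $g$ is supported in $|\xi|\lesssim 1$ and the exponent $m\geq 0$; the analysis is the same three-region decomposition but now one must track the possibly non-integrable behavior. When $|x|\gtrsim 1$ the cutoff makes $g$ a fixed smooth compactly supported function and one simply gets $\mathcal{D}^bg(x)\lesssim 1$ (indeed decaying), absorbed into the $1$ on the right. When $|x|\ll 1$, the near-diagonal and $|y|\le|x|/2$ pieces go through exactly as before producing $|x|^{2(m-b)}$, \emph{except} that the integral $\int_{|y-x|\le|x|/2}|x-y|^{2-d-2b}dy$ is finite only if $2-d-2b>-d$, i.e.\ $b<1$, which holds; and the borderline $m-b=0$ is where $\int |y|^{2m}dy$ over an annulus produces a logarithm — this is the origin of the $(-\log|x|)^{1/2}$ term. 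The region $|y|\ge |x|/2$ now splits into $|x|/2\le|y|\lesssim 1$, contributing $|x|^{2m}$ and $|x|^{2(m-b)}$ terms from $\int|y|^{-d-2b}$ and $\int|y|^{2m-d-2b}$ over that shell (the latter with a log at $m=b$), plus $|y|\gtrsim 1$ which is harmless since $g$ is bounded there. Collecting all terms yields the stated dichotomy.

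The main obstacle, and the part requiring genuine care, is the borderline case $m=b$ in (ii) and, more generally, correctly bookkeeping which region produces which exponent among $|x|^{m}$, $|x|^{m-b}$, and the logarithm: one has to check that the putative dangerous contributions (the ones that would blow up faster than claimed) are in fact controlled by the constraints $b<1$, $m\ge 0$ (in (ii)) or $-d/2<m<0$ (in (i)), and that no cross term between "$|x|^{2m}$" and "$\int|y|^{2m}$" is lost. A secondary technical point is justifying the mean-value-theorem step on the near-diagonal region when $m-1<0$: there $\nabla g$ is singular at the origin, but since $|y|\sim|x|$ throughout that region the bound $|\nabla g(y)|\lesssim |x|^{m-1}$ is uniform and legitimate. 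I would also note that by the equivalence \eqref{equiv}--\eqref{propereq1} one could alternatively phrase everything through $D^b$ on the Fourier side, but the direct Stein-derivative computation is the cleanest route and is exactly the form needed later in the weighted estimates.
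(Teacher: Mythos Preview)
Your approach is correct and essentially the same as the paper's: a direct region-by-region estimate of the Stein derivative integral (the paper uses four regions rather than three, splitting your far region further into $\{|y|\ge 2|x|\}$ and an annular piece $\{|y|\sim|x|,\ |x-y|\ge|x|/4\}$, but this is cosmetic), using the mean value inequality on the near-diagonal and the homogeneity of $P$ elsewhere. One small correction: the logarithm in part (ii) does not come from $\int|y|^{2m}\,dy$ in the small-$|y|$ region as you first write---that integral is polynomially bounded for $m\ge 0$---but from $\int_{|x|\lesssim|y|\lesssim 1}|y|^{2m-d-2b}\,dy$ in the far region at $m=b$, exactly as you then correctly identify in your next paragraph; the paper handles the cutoff in (ii) via the splitting $P(x)\phi(x)-P(y)\phi(y)=P(x)(\phi(x)-\phi(y))+(P(y)-P(x))\phi(y)$, which cleanly isolates the bounded contribution $|x|^{m}\,\mathcal{D}^b\phi(x)$.
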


\begin{proof}
For simplicity, we denote by $P(\xi):=|\xi|^{-2k_1}|\xi|^{al}P_{k_2+l}(\xi)$.  We use the definition of $\mathcal{D}^b$ to write
\begin{equation*}
\big(\mathcal{D}^b(P(\xi))(x)\big)^2=\int\frac{|P(x)-P(y)|^2}{|x-y|^{d+2b}}\, dy=\sum_{j=1}^4\int_{A_j}(\cdots),    
\end{equation*}
where $A_1=\{y\in \mathbb{R}^d: \, |y|\geq 2|x|\}$, $A_2=\{y\in \mathbb{R}^d: \, |y|\leq \frac{|x|}{4}\}$, $A_3=\{y\in \mathbb{R}^d: \, \frac{|x|}{4}< |y|< 2|x|, \, |x-y|\geq \frac{|x|}{4}\}$, and $A_4=\{y\in \mathbb{R}^d: \, \frac{|x|}{4}< |y|< 2|x|, \, |x-y|\leq \frac{|x|}{4}\}$.  We first deal with i). Since $al+l+k_2-2k_1<0$, and $|y|\gg |x|$ in $A_1$, it follows
\begin{equation*}
|P(y)|\lesssim |x|^{al+l+k_2-2k_1},    
\end{equation*}
thus we get
\begin{equation*}
\begin{aligned}
\int_{A_1}\frac{|P(x)-P(y)|^2}{|x-y|^{d+2b}}\, dy\lesssim & |x|^{2(al+l+k_2-2k_1)}\int_{|y|\geq 2|x|}\frac{1}{|y|^{d+2b}}\, dy\\
\lesssim & |x|^{2(al+l+k_2-2k_1-b)}.
\end{aligned}    
\end{equation*}
On the regions $A_j$, $j=2,3$, we have $|y|\lesssim |x|$ with $|x-y|\sim |x|$, thus $|P(x)|\lesssim |y|^{al+l+k_2-2k_1}$. This implies
\begin{equation*}
\begin{aligned}
\int_{A_j}\frac{|P(x)-P(y)|^2}{|x-y|^{d+2b}}\, dy\lesssim & |x|^{-d-2b}\int_{|y|\leq \frac{|x|}{4}}|y|^{2(al+l+k_2-2k_1)}\, dy\\
\lesssim & |x|^{2(al+l+k_2-2k_1-b)},
\end{aligned}    
\end{equation*}
where we have used that $al+l+k_2-2k_1+\frac{d}{2}>0$. Finally, on the region $A_4$, we have $|x|\sim |y|$ with $|x-y|\leq \frac{|x|}{4}$, then the mean value inequality with $y\in A_4$ yields
\begin{equation}\label{Pineq}
\begin{aligned}
|P(x)-P(y)|\lesssim |x|^{al+l+k_2-2k_1-1}|x-y|.
\end{aligned}    
\end{equation}
The previous inequality allows us to conclude
\begin{equation*}
\begin{aligned}
  \int_{A_4}\frac{|P(x)-P(y)|^2}{|x-y|^{d+2b}}\, dy\lesssim & |x|^{2(al+l+k_2-2k_1-1)} \int_{ |x-y|\leq \frac{|x|}{4}} |x-y|^{2-d-2b}\, dy\\
  \lesssim & |x|^{2(al+l+k_2-2k_1-b)}.
\end{aligned}
\end{equation*}
Gathering the previous estimates, we complete the proof of i).

Next, we deal with the deduction of ii). We assume $al+l+k_2-2k_1\geq 0$. We use the same decomposition as above given by the sets $A_1$, $A_2$, $A_3$ and $A_4$. By writing 
\begin{equation}\label{decomp1}
P(x)\phi(x)-P(y)\phi(y)=P(x)(\phi(x)-\phi(y))+(P(y)-P(x))\phi(y),
\end{equation}
and using that $|y|\gg |x|$ in $A_1$, we deduce
\begin{equation}
\begin{aligned}
\int_{A_1} &\frac{|P(x)\phi(x)-P(y)\phi(y)|^2}{|x-y|^{d+2b}}\, dy\\
&\lesssim  |x|^{2(al+l+k_2-2k_1)}\big(\mathcal{D}^b(\phi)(x)\big)^2+\int_{|y|\geq 2|x|}|y|^{2(al+l+k_2-2k_1-b)-d}|\phi(y)|^2\, dy.
\end{aligned}    
\end{equation}
Using that $\phi$ is compactly supported, it is seen that
\begin{equation*}
\int_{|y|\geq 2|x|}|y|^{2(al+l+k_2-2k_1-b)-d}|\phi(y)|^2\, dy \lesssim \left\{\begin{aligned}
&|x|^{2(al+l+k_2-2k_1-b)}, \hspace{0.5cm} al+l+k_2-2k_1-b<0,\\
&-\log(|x|), \hspace{0.7cm} al+l+k_2-2k_1-b=0,\, |x|\ll 1,\\
&1, \hspace{2cm} al+l+k_2-2k_1-b=0, \, |x|\gtrsim 1, \\
&1, \hspace{2cm} al+l+k_2-2k_1-b>0.
\end{aligned}\right.
\end{equation*}
On the region $A_j$, $j=2,3$, $|y|\lesssim |x|$ with $|x-y|\sim |x|$, it follows
\begin{equation*}
\begin{aligned}
\int_{A_j}\frac{|P(x)\phi(x)-P(y)\phi(y)|^2}{|x-y|^{d+2b}}\, dy\lesssim & |x|^{2(al+l+k_2-2k_1-b)-d}\int_{|y|\leq \frac{|x|}{4}}\big(1+ |\phi(y)|^2\big) \, dy\\
\lesssim & |x|^{2(al+l+k_2-2k_1-b)}.
\end{aligned}    
\end{equation*} 
Finally, on the region $A_4$, we use \eqref{Pineq} and \eqref{decomp1} to get
\begin{equation}
\begin{aligned}
\int_{A_4} &\frac{|P(x)\phi(x)-P(y)\phi(y)|^2}{|x-y|^{d+2b}}\, dy\\
&\lesssim  |x|^{2(al+l+k_2-2k_1)}\big(\mathcal{D}^b(\phi)(x)\big)^2+|x|^{2(al+l+k_2-2k_1)-2}\int_{|x-y|\leq \frac{|x|}{4}}|x-y|^{2-d-2b}\, dy\\
&\lesssim  |x|^{2(al+l+k_2-2k_1)}+|x|^{2(al+l+k_2-2k_1-b)}.
\end{aligned}    
\end{equation} 
This completes the deduction of ii). 
\end{proof}


\section{Proof of Lemmas \ref{linearestilemma} and \ref{unicontlemma}}\label{linearSec}

In this part, we first deduce Lemma \ref{linearestilemma}, which deals with spatial decay properties of the group of operators $U(t)$. After that, we verify the sharpness of condition \eqref{linearEstimcomp1} in Lemma \ref{unicontlemma}.

\subsection{Proof of Lemma \ref{linearestilemma}}

We first deduce part (i) of Lemma \ref{linearestilemma}.

\begin{proof}[Proof of Lemma \ref{linearestilemma} (i)] We will assume that $f$ is a Schwartz function as the general case follows by approximation to our estimates. We begin with the deduction of \eqref{linereq1}. We write $r=r_1+r_2$, with $r_1\in \mathbb{Z}^{+}\cup\{0\}$, $r_2\in [0,1)$. By using Plancherel's identity and the relation between decay and regularity connecting the Fourier and the spatial domains, we have
\begin{equation}\label{Planchereleq1}
\begin{aligned}
\|\langle x\rangle^r U(t)f\|_{L^2}=&\|J^{r}\big(e^{i\xi_1|\xi|^a t}\widehat{f}\big)\|_{L^2}\\
\lesssim & \|e^{i\xi_1|\xi|^a t}\widehat{f}\|_{L^2}+\sum_{\substack{|\beta_1|+|\beta_2|\leq r_1}}\|D^{r_2}_{\xi}\big(\partial^{\beta_1}(e^{i\xi_1|\xi|^a t})\partial^{\beta_2}\widehat{f}\big)\|_{L^2}.
\end{aligned}    
\end{equation}
To estimate the second term on the right-hand side of the above inequality, for a given $|\beta|\geq 1$, we use the following identity
\begin{equation}\label{lineqeq1}
\begin{aligned}
\partial^{\beta}(e^{i\xi_1|\xi|^a t})=&|\xi|^{-2|\beta|}\sum_{l=1}^{|\beta|}t^{l}|\xi|^{al}P_{|\beta|+l}(\xi)e^{i\xi_1|\xi|^a t}\\
=:&Q_{\beta}(a,t,\xi)e^{i\xi_1|\xi|^a t},
\end{aligned}
\end{equation}
$\xi \neq 0$, where $P_{|\beta|+l}(\xi)$ denotes a homogeneous polynomial of order $|\beta|+l$, with $P_{|\beta|+1}(\xi)\neq 0$. The deduction of \eqref{lineqeq1} follows by an inductive argument on the order of the multi-index $|\beta|\geq 1$, we omit its inference. To be consistent with the notation, we also denote by $Q_{0}(a,t,\xi)=1$.  Thus, setting $|\beta_1|+|\beta_2|\leq r_1$, we use properties \eqref{Leibaniso}, and \eqref{propereq1} to get
\begin{equation}\label{lineqeq1.0}
\begin{aligned}
\|D^{r_2}_{\xi}\big(\partial^{\beta_1}&(e^{i\xi_1|\xi|^a t})\partial^{\beta_2}\widehat{f}\big)\|_{L^2}\\
\lesssim & \|\mathcal{D}^{r_2}_{\xi}\big(Q_{\beta_1}(a,t,\xi)e^{i\xi_1|\xi|^a t}\partial^{\beta_2}\widehat{f}\big)\|_{L^2}\\
\lesssim & \|\mathcal{D}^{r_2}_{\xi}(e^{i\xi_1|\xi|^a t})Q_{\beta_1}(a,t,\xi)\partial^{\beta_2}\widehat{f}\|_{L^2}+  \|\mathcal{D}^{r_2}_{\xi}\big(Q_{\beta_1}(a,t,\xi)\partial^{\beta_2}\widehat{f}\big)\|_{L^2}\\
\lesssim & \langle t \rangle^{r_2}\|\langle\xi \rangle^{a r_2}Q_{\beta_1}(a,t,\xi)\partial^{\beta_2}\widehat{f}\|_{L^2}+  \|\mathcal{D}^{r_2}_{\xi}\big(Q_{\beta_1}(a,t,\xi)\partial^{\beta_2}\widehat{f}\big)\|_{L^2}\\
=&\langle t \rangle^{r_2}\mathcal{I}_1(t,\beta_1,\beta_2)+\mathcal{I}_2(t,\beta_1,\beta_2),
\end{aligned}
\end{equation}
where we have also used Lemma \ref{decaylinearexp}. We first estimate the factor $\mathcal{I}_2(t,\beta_1,\beta_2)$. Notice that when $\beta_1=0$, from \eqref{propereq1}, it is seen that
\begin{equation*}
\begin{aligned}
\mathcal{I}_2(t,\beta_1,\beta_2)\lesssim \|J^{r_1+r_2}\widehat{f}\|_{L^2}\sim \|\langle x \rangle^{r_1+r_2}f\|_{L^2}.
\end{aligned}
\end{equation*}
We will assume that $|\beta_1|\geq 1$. We denote by
\begin{equation*}
\begin{aligned}
Q_{\beta_1}(a,t,\xi)=\sum_{l=1}^{|\beta_1|}|\xi|^{-2|\beta_1|}t^l |\xi|^{al}P_{|\beta_1|+l}(\xi)=:\sum_{l=1}^{|\beta_1|}Q_{\beta_1,l}(a,t,\xi),
\end{aligned}    
\end{equation*}
thus by using \eqref{propereq1}, we have
\begin{equation*}
\begin{aligned}
\mathcal{I}_2(t,\beta_1,\beta_2)\lesssim \sum_{l=1}^{|\beta_1|}\|\mathcal{D}^{r_2}_{\xi}\big(Q_{\beta_1,l}(a,t,\xi)\partial^{\beta_2}\widehat{f}\big)\|_{L^2}=:\sum_{l=1}^{|\beta_1|}\mathcal{I}_{2,l}(t,\beta_1,\beta_2).
\end{aligned}    
\end{equation*}
We divide the estimate of $\mathcal{I}_{2,l}(t,\beta_1,\beta_2)$ into three cases:
\begin{itemize}
    \item[(a)] $al+l-|\beta_1|\geq 1$,
    \item[(b)] $0\leq al+l-|\beta_1|< 1$,
    \item[(c)] $al+l-|\beta_1|< 0$.
\end{itemize}

\underline{\bf Case (a): $al+l-|\beta_1|\geq 1$}. By \eqref{Leibaniso}, and \eqref{gradaniso}, we deduce
\begin{equation*}
\begin{aligned}
\mathcal{I}_{2,l}(t,\beta_1,\beta_2)=&\|\mathcal{D}_{\xi}^{r_2}\big(\langle \xi \rangle^{-(al+l-|\beta_1|)}Q_{\beta_1,l}(a,t,\xi))\langle \xi \rangle^{al+l-|\beta_1|}\partial^{\beta_2}\widehat{f}\big)\|_{L^2}\\
\lesssim & \big( \|\langle \xi \rangle^{-(al+l-|\beta_1|)}Q_{\beta_1,l}(a,t,\xi)\|_{L^{\infty}}+\|\nabla\big(\langle \xi \rangle^{-(al+l-|\beta_1|)}Q_{\beta_1,l}(a,t,\xi)\big)\|_{L^{\infty}}\big)\\
&\times \|\langle \xi \rangle^{al+l-|\beta_1|}\partial^{\beta_2}\widehat{f}\|_{L^2}+\|D^{r_2}\big(\langle \xi \rangle^{al+l-|\beta_1|}\partial^{\beta_2}\widehat{f}\big)\|_{L^2}\\
\lesssim & \langle t \rangle^l\big(\|\langle \xi \rangle^{al+l-|\beta_1|}\partial^{\beta_2}\widehat{f}\|_{L^2}+\|D^{r_2}\big(\langle \xi \rangle^{al+l-|\beta_1|}\partial^{\beta_2}\widehat{f}\big)\|_{L^2}\big).
\end{aligned}    
\end{equation*}
We emphasize that $al+l-|\beta_1|\geq 1$ guarantees that we can use \eqref{gradaniso} with $h(\xi)=\langle \xi \rangle^{-(al+l-|\beta_1|)}Q_{\beta_1,l}(a,t,\xi)$.  To complete the analysis of the above inequality, by distributing the derivative of order $\beta_2$, together with multiple applications of \eqref{Leibaniso}, and \eqref{gradaniso}, it is not difficult to deduce
\begin{equation}\label{lineeqe1.1}
\begin{aligned}
\|\langle \xi \rangle^{al+l-|\beta_1|}\partial^{\beta_2}\widehat{f}\|_{L^2}&+\|D^{r_2}\big(\langle \xi \rangle^{al+l-|\beta_1|}\partial^{\beta_2}\widehat{f}\big)\|_{L^2}\\
&\lesssim \sum_{\beta_{2,1}+\beta_{2,2}=\beta_2} \|J^{r_2}\partial^{\beta_{2,1}}\big(\langle \xi \rangle^{al+l-|\beta_1|-|\beta_{2,2}|}\widehat{f}\big)\|_{L^2}.
\end{aligned}    
\end{equation}
Now, if $al+l-|\beta_1|-|\beta_{2,2}|\leq 0$, we distribute the integer and the fractional derivatives, using \eqref{Leibaniso} and \eqref{gradaniso} in the fractional case to deduce
\begin{equation*}
\begin{aligned}
\|J^{r_2}\partial^{\beta_{2,1}}\big(\langle \xi \rangle^{al+l-|\beta_1|-|\beta_{2,2}|}\widehat{f}\big)\|_{L^2}\lesssim & \|J^{r_2+|\beta_{2,1}|}\widehat{f}\|_{L^2}\\
\lesssim &\|J^{r_1+r_2}\widehat{f}\|_{L^2}\sim \|
\langle x \rangle^{r_1+r_2}f\|_{L^2}.
\end{aligned}    
\end{equation*}
If $al+l-|\beta_1|-|\beta_{2,2}|> 0$, we use interpolation Lemma \ref{interaniso} to get
\begin{equation}\label{lineeqe1.1.1}
\begin{aligned}
\|J^{r_2}\partial^{\beta_{2,1}}\big(\langle \xi \rangle^{al+l-|\beta_1|-|\beta_{2,2}|}\widehat{f}\big)\|_{L^2} &\lesssim \|J^{r_2+|\beta_{2,1}|}\big(\langle \xi \rangle^{al+l-|\beta_1|-|\beta_{2,2}|}\widehat{f}\big)\|_{L^2}\\
&\lesssim \|J^{r_1+r_2}\widehat{f}\|_{L^2}+\|\langle \xi \rangle^{\frac{(r_1+r_2)(al+l-|\beta_1|-|\beta_{2,2}|)}{r_1-|\beta_{2,1}|}}\widehat{f}\|_{L^2}\\
&\lesssim \|J^{r_1+r_2}\widehat{f}\|_{L^2}+\|\langle \xi \rangle^{(r_1+r_2)a}\widehat{f}\|_{L^2}\\
&\sim \|\langle x \rangle^{r_1+r_2} f\|_{L^2}+\|J^{(r_1+r_2)a} f\|_{L^2},
\end{aligned}    
\end{equation}
where given that $|\beta_1|+|\beta_2|\leq r_1,$ $0\leq |\beta_{2,1}|,|\beta_{2,2}|\leq |\beta_2|$, $1\leq l\leq |\beta_1|$, we have used $al+l-|\beta_1|-|\beta_{2,2}|\leq a(r_1-|\beta_{2,1}|)$.  This completes the estimate of $\mathcal{I}_{2,l}(t,\beta_1,\beta_2)$ in the case $al+l-|\beta_1|\geq 1$.
\\ \\ 
\underline{\bf Case (b): $0\leq al+l-|\beta_1|< 1$}.   We consider $\phi \in C^{\infty}_c(\mathbb{R}^d)$ with $\phi(\xi)=1$, whenever $|\xi|\leq 1$. By \eqref{Leibaniso} and using the function $\phi$ to divide the domain of integration, we split our arguments as follows
\begin{equation}\label{lineeqe1.2}
\begin{aligned}
\mathcal{I}_{2,l}&(t,\beta_1,\beta_2)\\
\lesssim & \|\mathcal{D}_{\xi}^{r_2}\big(Q_{\beta_1,l}(a,t,\xi)\phi\partial^{\beta_2}\widehat{f}\big)\|_{L^2}+\|\mathcal{D}_{\xi}^{r_2}\big(Q_{\beta_1,l}(a,t,\xi)(1-\phi)\partial^{\beta_2}\widehat{f}\big)\|_{L^2}\\
\lesssim &\|\mathcal{D}_{\xi}^{r_2}\big(Q_{\beta_1,l}(a,t,\xi)\phi)\partial^{\beta_2}\widehat{f}\|_{L^2}+\|Q_{\beta_1,l}(a,t,\xi)\phi \mathcal{D}_{\xi}^{r_2}(\partial^{\beta_2}\widehat{f})\|_{L^2}\\
&+ \big( \|\langle \xi \rangle^{-(al+l-|\beta_1|)}Q_{\beta_1,l}(a,t,\xi)(1-\phi)\|_{L^{\infty}}+\|\nabla\big(\langle \xi \rangle^{-(al+l-|\beta_1|)}Q_{\beta_1,l}(a,t,\xi)(1-\phi)\big)\|_{L^{\infty}}\big)\\
&\times \|\langle \xi \rangle^{al+l-|\beta_1|}\partial^{\beta_2}\widehat{f}\|_{L^2}+\|D^{r_2}\big(\langle \xi \rangle^{al+l-|\beta_1|}\partial^{\beta_2}\widehat{f}\big)\|_{L^2}\\
\lesssim & \|\mathcal{D}_{\xi}^{r_2}\big(Q_{\beta_1,l}(a,t,\xi)\phi)\partial^{\beta_2}\widehat{f}\|_{L^2}\\
&+ \langle t \rangle^l\big(\|\mathcal{D}_{\xi}^{r_2}\partial^{\beta_2}\widehat{f}\|_{L^2}+\|\langle \xi \rangle^{al+l-|\beta_1|}\partial^{\beta_2}\widehat{f}\|_{L^2}+\|D^{r_2}\big(\langle \xi \rangle^{al+l-|\beta_1|}\partial^{\beta_2}\widehat{f}\big)\|_{L^2}\big).
\end{aligned}    
\end{equation}
By following the same strategy in the study of \eqref{lineeqe1.1}, i.e., using interpolation inequality, we get
\begin{equation*}
\begin{aligned}
\|\mathcal{D}_{\xi}^{r_2}\partial^{\beta_2}\widehat{f}\|_{L^2}&+\|\langle \xi \rangle^{al+l-|\beta_1|}\partial^{\beta_2}\widehat{f}\|_{L^2}+\|D^{r_2}\big(\langle \xi \rangle^{al+l-|\beta_1|}\partial^{\beta_2}\widehat{f}\big)\|_{L^2} \\
& \lesssim \|\langle x \rangle^{r_1+r_2}f\|_{L^2}+\|J^{(r_1+r_2)a}f\|_{L^2}.
\end{aligned}    
\end{equation*}
It only remains to estimate the first term on the right-hand side of \eqref{lineeqe1.2}. We consider two extra cases: $al+l-|\beta_1|-r_2\neq 0$ and $al+l-|\beta_1|-r_2= 0$.
\\ \\
\underline{Assume $al+l-|\beta_1|-r_2\neq 0$}. By ii) in Lemma \ref{lemmafracderivpolyno}, it follows 
\begin{equation*}
\begin{aligned}
\|\mathcal{D}_{\xi}^{r_2}&\big(Q_{\beta_1,l}(a,t,\xi)\phi)\partial^{\beta_2}\widehat{f}\|_{L^2}\\
&\lesssim \langle t \rangle^l\big(\|\partial^{\beta_2}\widehat{f}\|_{L^2}+\||\xi|^{al+l-|\beta_1|}\partial^{\beta_2}\widehat{f}\|_{L^2}+\||\xi|^{al+l-|\beta_1|-r_2}\partial^{\beta_2}\widehat{f}\|_{L^2} \big)\\
&\lesssim \langle t \rangle^l\big(\|\partial^{\beta_2}\widehat{f}\|_{L^2}+\|\langle \xi \rangle^{al+l-|\beta_1|}\partial^{\beta_2}\widehat{f}\|_{L^2}+\||\xi|^{al+l-|\beta_1|-r_2}\partial^{\beta_2}\widehat{f}\|_{L^2} \big).
\end{aligned}    
\end{equation*}
The first term on the right-hand side of the above expression is bounded as required. The second term can be estimated by distributing the derivative and using interpolation inequality as we did in \eqref{lineeqe1.1.1}. To estimate the last factor on the right-hand side of the above inequality, we first assume $al+l-|\beta_1|-r_2>0$. By distributing the derivative $\beta_2$, we deduce
\begin{equation*}
\begin{aligned}
\||\xi|^{al+l-|\beta_1|-r_2}\partial^{\beta_2}\widehat{f}\|_{L^2}\lesssim & \|\langle \xi\rangle^{al+l-|\beta_1|-r_2}\partial^{\beta_2}\widehat{f}\|_{L^2}\\
\lesssim & \sum_{\beta_{2,1}+\beta_{2,2}=\beta_2} \|\partial^{\beta_{2,1}}\big(\langle \xi \rangle^{al+l-|\beta_1|-r_2-|\beta_{2,2}|}\widehat{f}\big)\|_{L^2}\\
\lesssim & \|J^{|\beta_2|}\widehat{f}\|_{L^2}+\sum_{\substack{|\beta_{2,1}|+|\beta_{2,2}|=|\beta_2|\\ al+l-|\beta_1|-r_2-|\beta_{2,2}|>0}} \|J^{|\beta_{2,1}|}\big(\langle \xi\rangle^{al+l-|\beta_1|-r_2-|\beta_{2,2}|}\widehat{f}\big)\|_{L^2}.
\end{aligned}    
\end{equation*}
By interpolation Lemma \ref{interaniso} and the fact that $al+l-|\beta_1|-r_2-|\beta_{2,2}|\leq a(r_1+r_2-|\beta_{2,1}|)$, we find
\begin{equation}\label{lineeqe1.2.1}
\begin{aligned}
\|J^{|\beta_{2,1}|}\big(\langle \xi\rangle^{al+l-|\beta_1|-r_2-|\beta_{2,2}|}\widehat{f}\big)\|_{L^2}\lesssim & \|J^{r_1+r_2}\widehat{f}\|_{L^2}+\|\langle \xi \rangle^{\frac{(r_1+r_2)(al+l-|\beta_1|-r_2-|\beta_{2,2}|)}{r_1+r_2-|\beta_{2,1}|}}\widehat{f}\|_{L^2}\\
\lesssim & \|J^{r_1+r_2}\widehat{f}\|_{L^2}+\|\langle \xi \rangle^{a(r_1+r_2)}\widehat{f}\|_{L^2}\\
\sim & \|\langle x \rangle^{r_1+r_2}f\|_{L^2}+\|J^{a(r_1+r_2)}f\|_{L^2}.
\end{aligned}    
\end{equation}
Now, we assume $al+l-|\beta_1|-r_2<0$. We use H\"older's inequality and Hardy-Littlewood-Sobolev inequality to get
\begin{equation}\label{lineeqe1.3}
\begin{aligned}
\||\xi|^{al+l-|\beta_1|-r_2}\partial^{\beta_2}\widehat{f}\|_{L^2}\lesssim & \||\xi|^{al+l-|\beta_1|-r_2}\phi \partial^{\beta_2}\widehat{f}\|_{L^2}+\||\xi|^{al+l-|\beta_1|-r_2}(1-\phi)\partial^{\beta_2}\widehat{f}\|_{L^2}\\
\lesssim & \||\xi|^{al+l-|\beta_1|-r_2}\phi\|_{L^{p_1}}\| \partial^{\beta_2}\widehat{f}\|_{L^{p_2}}+\|\partial^{\beta_2}\widehat{f}\|_{L^2}\\
\lesssim & \||\xi|^{al+l-|\beta_1|-r_2}\phi\|_{L^{p_1}}\|D^{s_1}\partial^{\beta_2}\widehat{f}\|_{L^{2}}+\|\partial^{\beta_2}\widehat{f}\|_{L^2},\\
\lesssim & \|J^{s_1+|\beta_2|}\widehat{f}\|_{L^2}\lesssim\|\langle x \rangle^{r_1+r_2}f\|_{L^2}, 
\end{aligned}    
\end{equation}
where the above inequality is justified if there exists $1<p_1,p_2<\infty$, and $s_1>0$ such that
\begin{equation*}
\left\{\begin{aligned}
&\frac{1}{2}=\frac{1}{p_1}+\frac{1}{p_2}, \hspace{1cm} \frac{1}{p_2}=\frac{1}{2}-\frac{s_1}{d}, \hspace{1cm} s_1<\frac{d}{2}, \\
& al+l-|\beta_1|-r_2+\frac{d}{p_1}>0, \hspace{1cm} s_1+|\beta_{2}|\leq r_1+r_2.
\end{aligned} \right.   
\end{equation*}
However, the above conditions are reduced to show that there exists $s_1>0$ such that $-al-l+|\beta_1|+r_2<s_1<\min\{r_1+r_2-|\beta_2|,\frac{d}{2}\}$, which holds true provided that $1 \leq l \leq |\beta_1|$, $|\beta_1|+|\beta_2|\leq r_1+r_2$, and $r_1+r_2<a+1+\frac{d}{2}$.

\underline{Assume $al+l-|\beta_1|-r_2= 0$}. Let $\widetilde{\phi}\in C^{\infty}_c(\mathbb{R}^d)$ supported on $|\xi|\leq \frac{1}{4}$, and $\phi(\xi)=1$ for $\xi$ in a small neighborhood of the origin. Then, we use Lemma \ref{lemmafracderivpolyno} to deduce 
\begin{equation*}
\begin{aligned}
\|\mathcal{D}_{\xi}^{r_2}&\big(Q_{\beta_1,l}(a,t,\xi)\phi)\partial^{\beta_2}\widehat{f}\|_{L^2}\\
&\lesssim \langle t \rangle^l\big(\|\partial^{\beta_2}\widehat{f}\|_{L^2}+\||\xi|^{r_2}\partial^{\beta_2}\widehat{f}\|_{L^2}+\|(-\log(|\xi|))^{\frac{1}{2}}\widetilde{\phi}\partial^{\beta_2}\widehat{f}\|_{L^2} \big)\\
&\lesssim \langle t \rangle^l\big(\|\langle x \rangle^{r_1+r_2}f\|_{L^2}+\|\langle \xi\rangle^{r_2}\partial^{\beta_2}\widehat{f}\|_{L^2}+\|(-\log(|\xi|))^{\frac{1}{2}}\widetilde{\phi}\partial^{\beta_2}\widehat{f}\|_{L^2} \big).
\end{aligned}    
\end{equation*}
The estimate for the second term on the right-hand side of the above expression follows similar arguments as in \eqref{lineeqe1.1.1}. The analysis of the third term follows the ideas in \eqref{lineeqe1.3}. To see this, let $0<s_2<\min\{\frac{d}{2},r_1+r_2-|\beta_2|\}$, we apply H\"older's inequality and Hardy-Littlewood-Sobolev inequality to deduce 
\begin{equation*}
\begin{aligned}
\|(-\log(|\xi|))^{\frac{1}{2}}\widetilde{\phi}\partial^{\beta_2}\widehat{f}\|_{L^2}\lesssim & \|(-\log(|\xi|))^{\frac{1}{2}}\widetilde{\phi}\|_{L^{\frac{d}{s_2}}}\|\partial^{\beta_2}\widehat{f}\|_{L^{\frac{2d}{d-2s_2}}}\\
\lesssim & \|(-\log(|\xi|))^{\frac{1}{2}}\widetilde{\phi}\|_{L^{\frac{d}{s_2}}}\|D^{s_2}\partial^{\beta_2}\widehat{f}\|_{L^{2}}\\
\lesssim & \|\langle x \rangle^{r_1+r_2}f\|_{L^2}.
\end{aligned}    
\end{equation*}

\underline{\bf Case (c): $al+l-|\beta_1|< 0$}. By using \eqref{Leibaniso} and Lemma \ref{lemmafracderivpolyno}, it is seen that
\begin{equation*}
\begin{aligned}
\mathcal{I}_{2,l}(t,\beta_1,\beta_2)\lesssim & \|\mathcal{D_{\xi}}^{r_2}(Q_{\beta_1,l}(a,t,\xi))\partial^{\beta_2}\widehat{f}\|_{L^2}+\langle t \rangle^l\||\xi|^{al+l-|\beta_1|}\mathcal{D}_{\xi}^{r_2}\partial^{\beta_2}\widehat{f}\|_{L^2}\\
\lesssim & \langle t \rangle^l\big(\||\xi|^{al+l-|\beta_1|-r_2}\phi \partial^{\beta_2} \widehat{f}\|_{L^2}+\||\xi|^{al+l-|\beta_1|}\phi \mathcal{D}_{\xi}^{r_2}\partial^{\beta_2}\widehat{f}\|_{L^2}\\
&+\|J^{|\beta_2|+r_2}\widehat{f}\|_{L^2}\big).
\end{aligned}    
\end{equation*}
The estimate of the first two terms follow by similar arguments in \eqref{lineeqe1.3}. However, there is a difference due to the presence of the fractional derivative $\mathcal{D}^{r_2}_{\xi}$. More precisely, we let $s_1,s_2>0$ such that
\begin{equation*}
-al-l+|\beta_1|+r_2<s_1<\min\{\frac{d}{2},r_1+r_2-|\beta_2|\},  
\end{equation*}
and
\begin{equation*}
-al-l+|\beta_1|<s_2<\min\{\frac{d}{2},r_1-|\beta_2|\}. 
\end{equation*}
We apply  H\"older's inequality and Hardy-Littlewood-Sobolev inequality to deduce
\begin{equation*}
\begin{aligned}
\||\xi|^{al+l-|\beta_1|-r_2}\phi \partial^{\beta_2}\widehat{f}\|_{L^2}&+\||\xi|^{al+l-|\beta_1|}\phi \mathcal{D}_{\xi}^{r_2}\partial^{\beta_2}\widehat{f}\|_{L^2}\\
\lesssim & \||\xi|^{al+l-|\beta_1|-r_2}\phi\|_{L^{\frac{d}{s_1}}}\|\partial^{\beta_2}\widehat{f}\|_{L^{\frac{2d}{d-2s_1}}}\\
&+\||\xi|^{al+l-|\beta_1|}\phi\|_{L^{\frac{d}{s_2}}}\big(\|\partial^{\beta_2}\widehat{f}\|_{L^{\frac{2d}{d-2s_2}}}+\|D^{r_2}\partial^{\beta_2}\widehat{f}\|_{L^{\frac{2d}{d-2s_2}}}\big)\\
\lesssim & \|J^{s_1+|\beta_2|}\widehat{f}\|_{L^2}+\|J^{s_2+|\beta_2|+r_2}\widehat{f}\|_{L^2}\\
\lesssim & \|J^{r_1+r_2}\widehat{f}\|_{L^2} \sim \|\langle x \rangle^{r_1+r_2}f\|_{L^2},
\end{aligned}    
\end{equation*}
we have also used Theorem \ref{stein} at the $L^{\frac{2s_2}{s-2s_2}}$-level to exchange the derivative $\mathcal{D}_{\xi}^{r_2}$ by $D^{r_2}$. This completes the estimate of $I_{2}(t,\beta_1,\beta_2)$ within the assumptions of case (c).
\\ \\
Finally, we collect the estimates of the previous cases $(a)$, $(b)$ and $(c)$ to get
\begin{equation}\label{finaleq1}
\begin{aligned}
\mathcal{I}_{2}(t,\beta_1,\beta_2)\lesssim \langle t \rangle^{r_1}\big(\|\langle x \rangle^{r_1+r_2}f\|_{L^2}+\|J^{a(r_1+r_2)}f\|_{L^2}\big).
\end{aligned}    
\end{equation}
Next, we deal with $\mathcal{I}_1(t,\beta_1,\beta_2)$. When $\beta_1=0$, by similar arguments as above using Lemma \ref{interaniso}, we get
\begin{equation*}
\begin{aligned}
\mathcal{I}_1(t,0,\beta_2)\lesssim \langle t \rangle^{r_1}\|\langle\xi \rangle^{a r_2}\partial^{\beta_2}\widehat{f}\|_{L^2}\lesssim  \langle t \rangle^{r_1}\Big(\|\langle x \rangle^{r_1+r_2}f\|_{L^2}+\|J^{a(r_1+r_2)}f\|_{L^2}\Big).
\end{aligned}    
\end{equation*}
We  will assume $|\beta_1|\geq 1$. If $al+l-|\beta_1|\geq 0$, by using interpolation Lemma \ref{interaniso}, and similar ideas as in \eqref{lineeqe1.2.1}, we have
\begin{equation*}
\begin{aligned}
\mathcal{I}_1(t,\beta_1,\beta_2)|\lesssim & \langle t \rangle^{r_1} \sum_{l=1}^{|\beta_1|} \|\langle \xi \rangle^{ar_2+al+l-|\beta_1|}\partial^{\beta_2}\widehat{f}\|_{L^2}\\
\lesssim & \langle t \rangle^{r_1}\big( \|J^{r_1+r_2}\widehat{f}\|_{L^2}+\sum_{l=1}^{|\beta_1|}\sum_{\substack{\beta_{2,1}+\beta_{2,2}=\beta_2 \\ a r_2+al+l-|\beta_1|-|\beta_{2,2}|>0}} \|J^{|\beta_{2,1}|}(\langle \xi \rangle^{a r_2+al+l-|\beta_1|-|\beta_{2,2}|}\widehat{f})\|_{L^2}\big)\\
\lesssim & \langle t \rangle^{r_1}(\|J^{r_1+r_2}\widehat{f}\|_{L^2}+\|\langle \xi \rangle^{a(r_1+r_2)}\widehat{f}\|_{L^2})\\
\sim & \langle t \rangle^{r_1}(\|\langle x \rangle^{r_1+r_2}f\|_{L^2}+\|J^{a(r_1+r_2)}f\|_{L^2}).
\end{aligned}    
\end{equation*}
If $al+l-|\beta_1|< 0$, we use the function $\phi$ introduced before to divide our arguments as follows
\begin{equation*}
\begin{aligned}
\mathcal{I}_1(t,\beta_1,\beta_2)|\lesssim & \langle t \rangle^{r_1}\sum_{l=1}^{|\beta_1|}( \|\langle \xi \rangle^{a r_2}|\xi|^{al+l-|\beta_1|}\phi\partial^{\beta_2}\widehat{f}\|_{L^2}+\|\langle \xi \rangle^{a r_2}|\xi|^{al+l-|\beta_1|}(1-\phi)\partial^{\beta_2}\widehat{f}\|_{L^2}).
\end{aligned}    
\end{equation*}
Since $al+l-|\beta_1|<0$, the estimate for the second term on the right-hand side of the above inequality follows by similar considerations in \eqref{lineeqe1.2.1}, which mostly depend on Lemma \ref{interaniso}. Hence, we deduce
\begin{equation*}
\begin{aligned}
\|\langle \xi \rangle^{a r_2}|\xi|^{al+l-|\beta_1|}(1-\phi)\partial^{\beta_2}\phi\widehat{f}\|_{L^2}\lesssim & \|\langle \xi \rangle^{a r_2}\partial^{\beta_2}\phi\widehat{f}\|_{L^2}\\
\lesssim & \|J^{r_1+r_2}\widehat{f}\|_{L^2}+\|\langle \xi \rangle^{a(r_1+r_2)}\widehat{f}\|_{L^2}\\
\sim & \|\langle x \rangle^{r_1+r_2}f\|_{L^2}+\|J^{a(r_1+r_2)}f\|_{L^2}.
\end{aligned}    
\end{equation*}
By the argument in \eqref{lineeqe1.3} and familiar estimates, it is readily seen that
\begin{equation*}
\begin{aligned}
\|\langle \xi \rangle^{a r_2}|\xi|^{al+l-|\beta_1|}\phi\partial^{\beta_2}\widehat{f}\|_{L^2}\lesssim & \||\xi|^{al+l-|\beta_1|}\phi\partial^{\beta_2}\widehat{f}\|_{L^2}+\|\langle \xi \rangle^{a r_2}\partial^{\beta_2}\widehat{f}\|_{L^2}\\
\lesssim & \|J^{r_1+r_2}\widehat{f}\|_{L^2}+\|\langle \xi \rangle^{a(r_1+r_2)}\widehat{f}\|_{L^2}\\
\sim & \|\langle x \rangle^{r_1+r_2}f\|_{L^2}+\|J^{a(r_1+r_2)}f\|_{L^2}.
\end{aligned}    
\end{equation*}
Consequently, the previous considerations ultimately imply
\begin{equation}\label{finaleq2}
\begin{aligned}
\mathcal{I}_{1}(t,\beta_1,\beta_2)\lesssim \langle t \rangle^{r_1}\big(\|\langle x \rangle^{r_1+r_2}f\|_{L^2}+\|J^{a(r_1+r_2)}f\|_{L^2}\big).
\end{aligned}    
\end{equation}
Finally, \eqref{finaleq1} and \eqref{finaleq2} yield the desired result \eqref{linereq1}. 

\end{proof}

Next we deduce the second results in Lemma \ref{linearestilemma}.
\begin{remark}
When $a+m+\frac{d}{2}\leq r<a+1+m+\frac{d}{2}$ for some $m\in \mathbb{Z}^{+}$, and $f\in H^{a r}(\mathbb{R}^d)\cap L^2(|x|^{2r}\, dx)$, the integral in the condition \eqref{linearEstimcomp1} is well-defined. This is a consequence of Cauchy-Schwarz inequality as follows
\begin{equation*}
\begin{aligned}
\|x^{\beta}f\|_{L^1}\lesssim & \|\langle x\rangle^{-(\frac{d}{2}^{+})}\|_{L^2}\|\langle x\rangle^{\frac{d}{2}^{+}+m-1}f\|_{L^2}\\
\lesssim &\|\langle x\rangle^{r}f\|_{L^2}.
\end{aligned}    
\end{equation*}
\end{remark}

\begin{proof}[Proof of Lemma \ref{linearestilemma} (ii)]

We assume $a+m+\frac{d}{2}\leq r<a+1+m+\frac{d}{2}$ for some $m\in \mathbb{Z}^{+}$, $f\in H^{a r}(\mathbb{R}^d)\cap L^2(|x|^{2r}\, dx)$, where $f$ satisfies \eqref{linearEstimcomp1}. We let $P_{\phi}$ be the operator defined by the Fourier multiplier by the function $\phi$, i.e., $\widehat{P_{\phi}g}=\phi\widehat{g}$. It follows 
\begin{equation}\label{eqdecay2}
\|\langle x \rangle^r U(t)f\|_{L^2}\lesssim \|\langle x \rangle^r U(t)P_{\phi}f\|_{L^2} +\|\langle x \rangle^r U(t)(I-P_{\phi})f\|_{L^2}.
\end{equation}
As before, we will write $r=r_1+r_2$ with $r_1\in \mathbb{Z}^{+}$, $r_2\in [0,1)$. Arguing as in \eqref{Planchereleq1}, we have $\langle x \rangle^r U(t)P_{\phi}f\in L^{2}(\mathbb{R}^d)$ if and only if 
\begin{equation}\label{eqdecay3}
\begin{aligned}
\partial^{\beta_1}(e^{i\xi _1|\xi|^a t}\phi)\partial^{\beta_2}\widehat{f}\in H^{r_2}(\mathbb{R}^d),
\end{aligned}    
\end{equation}
and $\langle x \rangle^r U(t)(I-P_{\phi})f\in L^{2}(\mathbb{R}^d)$ if and only if 
\begin{equation}\label{eqdecay4}
\begin{aligned}
\partial^{\beta_1}(e^{i\xi _1|\xi|^a t}(1-\phi))\partial^{\beta_2}\widehat{f}\in H^{r_2}(\mathbb{R}^d),
\end{aligned}    
\end{equation}
for all multi-index $|\beta_1|+|\beta_2|\leq r_1$. We will show that \eqref{eqdecay3} and \eqref{eqdecay4} hold true and the right-hand side of \eqref{linereq1} is also valid for each function in these statements.

When $\beta_1=0$, \eqref{eqdecay3} and \eqref{eqdecay4}  follow from Lemma \ref{decaylinearexp}, and properties \eqref{Leibaniso} and \eqref{propereq1} (this same idea was used in \eqref{lineqeq1.0}, and the subsequent cases $\beta_1=0$). Thus, we will assume $|\beta_1|\geq 1$. Let us first show \eqref{eqdecay4}.

By using Leibniz's rule, we write
\begin{equation}\label{eqdecay4.1}
\begin{aligned}
\partial^{\beta_1}(e^{i\xi _1|\xi|^a t}(1-\phi))\partial^{\beta_2}\widehat{f}=&\partial^{\beta_1}(e^{i\xi_1|\xi|^a t})(1-\phi)\partial^{\beta_2}\widehat{f}\\
&+\sum_{\substack{\beta_{1,1}+\beta_{1,2}=\beta_1 \\ |\beta_{1,2}|\geq 1}}c_{\beta_{1,1},\beta_{1,2}} \partial^{\beta_{1,1}}(e^{i\xi_1|\xi|^a t})\partial^{\beta_{1,2}}(1-\phi)\partial^{\beta_2}\widehat{f}.
\end{aligned}    
\end{equation}
Since $\partial^{\beta_{1,2}}(1-\phi)$ is compactly supported outside of the origin whenever $|\beta_{1,2}|\geq 1$, by combining \eqref{Leibaniso}, \eqref{gradaniso}, and \eqref{propereq1}, we deduce
\begin{equation*}
\begin{aligned}
\|D^{r_2}&\big(\frac{\partial^{\beta_{1,1}}(e^{i\xi_1|\xi|^a t})\partial^{\beta_{1,2}}(1-\phi)}{\langle \xi \rangle^{a|\beta_1|}}\langle \xi \rangle^{a|\beta_1|} \partial^{\beta_2}\widehat{f}\big)\|_{L^2}\\
\lesssim & \big(\|\frac{\partial^{\beta_{1,1}}(e^{i\xi_1|\xi|^a t})\partial^{\beta_{1,2}}(1-\phi)}{\langle \xi \rangle^{a|\beta_1|}}\|_{L^{\infty}}+\|\nabla\big(\frac{\partial^{\beta_{1,1}}(e^{i\xi_1|\xi|^a t})\partial^{\beta_{1,2}}(1-\phi)}{\langle \xi \rangle^{a|\beta_1|}}\big)\|_{L^{\infty}}\big)\\
& \times \big(\|D^{r_2}(\langle \xi \rangle^{a|\beta_1|} \partial^{\beta_2}\widehat{f})\|_{L^2}+\|\langle \xi \rangle^{a|\beta_1|} \partial^{\beta_2}\widehat{f}\|_{L^2}\big)\\
\lesssim & \langle t \rangle^{|\beta_1|}\big( \|D^{r_2}(\langle \xi \rangle^{a|\beta_1|} \partial^{\beta_2}\widehat{f})\|_{L^2}+\|\langle \xi \rangle^{a|\beta_1|} \partial^{\beta_2}\widehat{f}\|_{L^2}\big).
\end{aligned}    
\end{equation*}
By distributing the derivative of order $|\beta_2|$, together with properties \eqref{Leibaniso}-\eqref{propereq1}, we deduce
\begin{equation}\label{eqdecay4.2}
\begin{aligned}
   \|D^{r_2}(\langle \xi \rangle^{a|\beta_1|} \partial^{\beta_2}\widehat{f})\|_{L^2}&+\|\langle \xi \rangle^{a|\beta_1|} \partial^{\beta_2}\widehat{f}\|_{L^2}\\
   \lesssim & \|J^{r_2+|\beta_2|}\widehat{f}\|_{L^2}+ \sum_{\substack{\beta_{2,1}+\beta_{2,2}=\beta_2\\ a|\beta_1|-|\beta_{2,2}|>0}}\|J^{r_2+|\beta_{2,1}|}(\langle \xi \rangle^{a|\beta_1|-|\beta_{2,2}|}\widehat{f})\|_{L^2}\\
   \lesssim &\|J^{r}\widehat{f}\|_{L^2}+\|\langle \xi \rangle^{ar}\widehat{f}\|_{L^2}\\
   \sim & \|\langle x\rangle^{r}f\|_{L^2}+\|J^{ar}f\|_{L^2},
    \end{aligned}
\end{equation}
with $r=r_1+r_2$, and where we have also used interpolation Lemma \ref{interaniso}. Thus, going back to \eqref{eqdecay4.1}, we have that \eqref{eqdecay4} holds true if and only if $\partial^{\beta_1}(e^{i\xi_1|\xi|^a t})(1-\phi)\partial^{\beta_2}\widehat{f}\in H^{r_2}(\mathbb{R}^d)$. Now,  to check that $\partial^{\beta_1}(e^{i\xi_1|\xi|^a t})(1-\phi)\partial^{\beta_2}\widehat{f}\in H^{r_2}(\mathbb{R}^d)$, we use \eqref{lineqeq1} to write
\begin{equation*}
\begin{aligned}
\partial^{\beta_1}(e^{i\xi_1|\xi|^a t})(1-\phi)\partial^{\beta_2}\widehat{f}=e^{i\xi_1|\xi|^a t}\frac{|\xi|^{-2|\beta_1|}\sum_{l=1}^{|\beta_{1}|}t^l|\xi|^{al}P_{|\beta_1|+l}(\xi)(1-\phi)}{\langle\xi \rangle^{a|\beta_1|}}\langle\xi \rangle^{a|\beta_1|}\partial^{\beta_2}\widehat{f}.
\end{aligned}    
\end{equation*}
Once again an application of \eqref{Leibaniso}, \eqref{gradaniso}, \eqref{propereq1}, Lemma \ref{decaylinearexp}, and the previous identity yield
\begin{equation*}
\begin{aligned}
\|D^{r_2}\big(\partial^{\beta_1}(e^{i\xi_1|\xi|^a t})&(1-\phi)\partial^{\beta_2}\widehat{f}\big)\|_{L^2}\\
\lesssim & \langle t \rangle^{r_2}\Big(\|\frac{|\xi|^{-2|\beta_1|}\sum_{l=1}^{|\beta_{1}|}t^l|\xi|^{al}P_{|\beta_1|+l}(\xi)(1-\phi)}{\langle\xi \rangle^{a|\beta_1|}}\|_{L^{\infty}}\\
&+\|\nabla\big(\frac{|\xi|^{-2|\beta_1|}\sum_{l=1}^{|\beta_{1}|}t^l|\xi|^{al}P_{|\beta_1|+l}(\xi)(1-\phi)}{\langle\xi \rangle^{a|\beta_1|}}\big)\|_{L^{\infty}}\Big)\\
& \times \big(\|D^{r_2}(\langle \xi \rangle^{a|\beta_1|} \partial^{\beta_2}\widehat{f})\|_{L^2}+\|\langle \xi \rangle^{a|\beta_1|+ar_2} \partial^{\beta_2}\widehat{f}\|_{L^2}\big)\\
\lesssim & \langle t \rangle^{|\beta_1|+r_2}\big( \|D^{r_2}(\langle \xi \rangle^{a|\beta_1|} \partial^{\beta_2}\widehat{f})\|_{L^2}+\|\langle \xi \rangle^{a|\beta_1|+ar_2} \partial^{\beta_2}\widehat{f}\|_{L^2}\big).
\end{aligned}    
\end{equation*}
Hence, by \eqref{eqdecay4.2} and familiar arguments, we get
\begin{equation*}
    \begin{aligned}
\|D^{r_2}(\langle \xi \rangle^{a|\beta_1|} \partial^{\beta_2}\widehat{f})\|_{L^2}+\|\langle \xi \rangle^{a|\beta_1|+ar_2} \partial^{\beta_2}\widehat{f}\|_{L^2}
   \lesssim & \|\langle x\rangle^{r}f\|_{L^2}+\|J^{ar}f\|_{L^2}.
    \end{aligned}
\end{equation*}
Consequently, the above inequality shows the validity of \eqref{eqdecay4}.

Next, we establish \eqref{eqdecay3}. By Leibniz's rule,  \eqref{eqdecay3} is equivalent to 
\begin{equation}\label{eqdecay5}
\begin{aligned}
(\partial^{\beta_1}(e^{i\xi _1|\xi|^a t}))(\partial^{\beta_3}\phi)\partial^{\beta_2}\widehat{f}\in H^{r_2}(\mathbb{R}^d),
\end{aligned}    
\end{equation}
with $|\beta_1|+|\beta_2|+|\beta_3|\leq r_1$. If $\beta_{3}\neq 0$, following the notation in \eqref{lineqeq1}, i.e, $\partial^{\beta_1}(e^{it\xi_1|\xi|^a})=Q_{\beta_1}(a,t,\xi)e^{it\xi_1|\xi|^a}$, we have  $Q_{\beta_1}(a,t,\xi)\partial^{\beta_{3}}\phi\in C^{\infty}_c(\mathbb{R}^d)$. Then, by Lemma \ref{decaylinearexp}, properties \eqref{Leibaniso}, \eqref{gradaniso} and \eqref{propereq1}, we deduce
\begin{equation}\label{eqdecay5.1}
\begin{aligned}
\|D^{r_2}\big(\partial^{\beta_{1}}(e^{it\xi_1|\xi|^a}))(\partial^{\beta_3}\phi)&(\partial^{\beta_{2}}\widehat{f})\big)\|_{L^2}\\
\lesssim & \langle t\rangle^{r_2}\big( \|\langle\xi \rangle^{a r_2}Q_{\beta_1}(a,t,\xi)(\partial^{\beta_{3}}\phi)\|_{L^{\infty}}\\
&+ \|\mathcal{D}^{r_2}\big(Q_{\beta_1}(a,t,\xi)(\partial^{\beta_{3}}\phi)\big)\|_{L^{\infty}}\big)\|\partial^{\beta_{2}}\widehat{f}\|_{H^{r_2}}\\
\lesssim & \langle t\rangle^{r_1+r_2} \|\langle x \rangle^{r}f\|_{L^2}.
\end{aligned}    
\end{equation}

Consequently, we are reduced to deduce \eqref{eqdecay5} with $\beta_3=0$. We will consider two further cases $|\beta_2|\geq m$, and $|\beta_2|< m$. 

\underline{Assume $|\beta_2|\geq m$, and $\beta_3=0$ in \eqref{eqdecay5}}. 
We distribute the derivative of order $\beta_{1}$ to write
\begin{equation*}
\begin{aligned}
(\partial^{\beta_1}(e^{it\xi_1|\xi|^a}))(\partial^{\beta_{2}}\widehat{f})\phi=\sum_{\beta_{1,1}+\beta_{1,2}=\beta_{1}} c_{\beta_{1,1},\beta_{1,2}}\partial^{\beta_{1,1}}\big(e^{it \xi_1|\xi|^a}\partial^{\beta_{2}+\beta_{1,2}}\widehat{f}\big)\phi,
\end{aligned}    
\end{equation*}
for some constants $c_{\beta_{1,1},\beta_{1,2}}$. The fact that $|\beta_{2}|\geq m$ implies
\begin{equation*}
    0\leq r_2+|\beta_{1,1}|\leq r_1+r_2-|\beta_2|-|\beta_{1,2}|<a+1+\frac{d}{2}.
\end{equation*}
Thus Plancherel's identity and the first part of Lemma \ref{linearestilemma} yield
\begin{equation}\label{eqdecay5.2}
\begin{aligned}
\|D^{r_2}\big((\partial^{\beta_1}(e^{it\xi_1|\xi|^a}))&(\partial^{\beta_{2}}\widehat{f})\phi\big)\|_{L^2}\\
\lesssim & \sum_{\beta_{1,1}+\beta_{1,2}=\beta_1}\|\langle x\rangle^{r_2+|\beta_{1,1}|}U(t)(x^{\beta_{2}+\beta_{1,2}}f)\|_{L^2}\\
\lesssim & \sum_{\beta_{1,1}+\beta_{1,2}=\beta_{1}}\|\langle x \rangle^{r_2+|\beta_{1,1}|}(x^{\beta_2+\beta_{1,2}}f)\|_{L^{2}}+\|J^{a(r_2+|\beta_{1,1}|)}\big(x^{\beta_{2}+\beta_{1,2}}f\big)\|_{L^2}.
\end{aligned}
\end{equation}
Writing $x^{\beta_{2}+\beta_{1,2}}=\big(\frac{x^{\beta_{2}+\beta_{1,2}}}{\langle x \rangle^{|\beta_{2}|+|\beta_{1,2}|}}\big) \langle x \rangle^{|\beta_{2}|+|\beta_{1,2}|}$, and decomposing the derivative of order $a(r_2+|\beta_{1,1}|)$ into its integer and fractional parts, we can use properties \eqref{Leibaniso}-\eqref{propereq1} to deduce 
\begin{equation*}
\begin{aligned}
\|J^{a(r_2+|\beta_{1,1}|)}\big(x^{\beta_{2}+\beta_{1,2}}f\big)\|_{L^2}\lesssim & \|J^{a(r_2+|\beta_{1,1}|)}\big(\langle x\rangle^{|\beta_{2}|+|\beta_{1,2}|}f\big)\|_{L^2}\\
\lesssim &\|J^{ar}f\|_{L^2}+\|\langle x \rangle^{r}f\|_{L^2},
\end{aligned}    
\end{equation*}
where in the last line we used interpolation Lemma \ref{interaniso}. This completes the study of case $|\beta_2|\geq m$. 

\underline{Assume $|\beta_2|< m$, and  $\beta_3=0$ in \eqref{eqdecay5}}. We first notice that \eqref{linearEstimcomp1} is equivalent to
\begin{equation}\label{meancond}
\partial^{\beta}\widehat{f}(0)=0,    
\end{equation}
for all $|\beta|\leq m-1$. We use the Taylor's formula and \eqref{meancond} to write
\begin{equation}\label{eqdecay5.3}
\begin{aligned}
\partial^{\beta_2}\widehat{f}(\xi)&=\sum_{|\beta|\leq m-1-|\beta_2|}\frac{1}{\beta!}\partial^{\beta+\beta_2}\widehat{f}(0)\xi^{\beta}+\sum_{|\beta|=m-|\beta_2|}\frac{|\beta|}{\beta!}\Big(\int_0^1 (1-\sigma)^{|\beta|-1}(\partial^{\beta+\beta_2}\widehat{f})(\sigma \xi)\, d\sigma\Big) \xi^{\beta}\\
&=\sum_{|\beta|=m-|\beta_2|}\frac{|\beta|}{\beta!}\Big(\int_0^1 (1-\sigma)^{|\beta|-1}(\partial^{\beta+\beta_2}\widehat{f})(\sigma \xi)\, d\sigma\Big) \xi^{\beta}\\
&=:\sum_{|\beta|=m-|\beta_2|}G_{\beta}(\partial^{\beta+\beta_2}\widehat{f},\xi) \xi^{\beta}.
\end{aligned}    
\end{equation}
Thus, by \eqref{lineqeq1}, \eqref{eqdecay5}, and the cases studied above, it only remains to show that for all $|\beta|=m-|\beta_2|$,
\begin{equation}\label{eqdecay6}
\begin{aligned}
\partial^{\beta_1}(e^{it\xi_1|\xi|^a})&\xi^{\beta}\phi(\xi) G_{\beta}(\partial^{\beta+\beta_2}\widehat{f},\xi) \\
=&|\xi|^{-2|\beta_1|}\sum_{l=1}^{|\beta_1|}t^{l}|\xi|^{al}P_{|\beta_1|+l}(\xi)e^{i\xi_1|\xi|^a t}\xi^{\beta}\phi(\xi)G_{\beta}(\partial^{\beta+\beta_2}\widehat{f},\xi)\\
=:&\sum_{l=1}^{|\beta_1|}Q_{|\beta_1|,l}(a,t,\xi)e^{i\xi_1|\xi|^a t}\xi^{\beta}\phi(\xi)G_{\beta}(\partial^{\beta+\beta_2}\widehat{f},\xi)\in H^{r_2}(\mathbb{R}^d),
\end{aligned}    
\end{equation}
$\xi \neq 0$, where $P_{|\beta_1|+l}(\xi)$ denotes a homogeneous polynomial of order $|\beta_1|+l$, with $P_{|\beta_1|+1}(\xi)\neq 0$. The idea is that the extra weight $\xi^{\beta}$ compensates the lack of regularity of the function $e^{i\xi _1|\xi|^a t}$ at the origin. Setting $1\leq l \leq |\beta_1|$, to prove \eqref{eqdecay6}, we use the properties \eqref{Leibaniso}-\eqref{propereq1} to get 
\begin{equation*}
\begin{aligned}
\|D^{r_2}\big(Q_{|\beta_1|,l}(a,t,\xi)e^{i\xi_1|\xi|^a t}\xi^{\beta}&\phi(\xi)G_{\beta}(\partial^{\beta+\beta_2}\widehat{f},\xi)\big)\|_{L^2}\\
\lesssim & 
\|\mathcal{D}^{r_2}(e^{i\xi_1|\xi|^a t})Q_{|\beta_1|,l}(a,t,\xi)\xi^{\beta}\phi G_{\beta}(\partial^{\beta+\beta_2}\widehat{f},\xi) \|_{L^2}\\
&+
\|D^{r_2}\big(Q_{|\beta_1|,l}(a,t,\xi)\xi^{\beta}\phi\big)\|_{L^2}\|G_{\beta}(\partial^{\beta+\beta_2}\widehat{f},\xi)\big)\|_{L^{\infty}}\\
&+\|Q_{|\beta_1|,l}(a,t,\xi)\xi^{\beta}\phi\mathcal{D}^{r_2}\big(G_{\beta}(\partial^{\beta+\beta_2}\widehat{f},\xi)\big)\|_{L^2}\\
=:& \mathcal{B}_1+\mathcal{B}_2+\mathcal{B}_3.
\end{aligned}    
\end{equation*}
Let us estimate each term $\mathcal{B}_1$, $\mathcal{B}_2$ and $\mathcal{B}_3$. For all multi-index $|\widetilde{\beta}|$, We observe that
\begin{equation}\label{eqdecay7}
    |Q_{|\beta_1|,l}(a,t,\xi)\xi^{\widetilde{\beta}}|\lesssim \langle t \rangle^{|\beta_1|}|\xi|^{al+l+|\widetilde{\beta}|-|\beta_1|}, \quad \xi \neq 0.
\end{equation}
Given that $|\beta|=m-|\beta_2|$, and the restriction on $r$, we have $al+l+|\beta|-|\beta_1|-r_2+\frac{d}{2}>0$, thus the above estimate and Lemma \ref{lemmafracderivpolyno} imply $\mathcal{D}^{r_2}\big(Q_{|\beta_1|,l}(a,t,\xi)\xi^{\beta}\phi(\xi)\big)\in L^2(\mathbb{R}^d)$, i.e., $Q_{|\beta_1|,l}(a,t,\xi)\xi^{\beta}\phi(\xi)\in H^{r_2}(\mathbb{R}^2)$. Moreover, $|\xi|^{ar_2}Q_{|\beta_1|,l}(a,t,\xi)\xi^{\beta}\phi(\xi)\in L^2(\mathbb{R}^d)$. These facts, Lemma \ref{decaylinearexp} ,and Sobolev embedding $H^{\frac{d}{2}^{+}}(\mathbb{R}^d)\hookrightarrow L^{\infty}(\mathbb{R}^d)$ allow us to conclude
\begin{equation*}
\begin{aligned}
\mathcal{B}_1+\mathcal{B}_2\lesssim \langle  t \rangle^{r} \int_0^1 \|\partial^{\beta+\beta_2}\widehat{f}(\xi)\|_{L^{\infty}}\, d\sigma\lesssim  \langle  t \rangle^{r} \|J^{(m+\frac{d}{2})^{+}}\widehat{f}\|_{L^{2}}\lesssim  \langle  t \rangle^{r} \|\langle x\rangle^{r} f\|_{L^{2}}.
\end{aligned}
\end{equation*}
Where we have also used that $a>0$, $r\geq a+m+\frac{d}{2}$, and thus $H^r(\mathbb{R}^d)\hookrightarrow H^{(m+\frac{d}{2})^{+}}(\mathbb{R}^d)$. The estimate of $\mathcal{B}_3$ is similar to that in \eqref{lineeqe1.3}. However, we will present its deduction as we have the integral term $G_{\beta}(\partial^{\beta+\beta_2}\widehat{f},\xi)$. We let $s_1>0$ fixed such that
\begin{equation*}
    \max\{0,\frac{d}{2}-r_2-1,|\beta_1|+|\beta_2|-al-l-m\}<s_1<\min\{r-r_2-m,\frac{d}{2}\}.
\end{equation*}
Such $s_1>0$ exists by our conditions on $r_1$, $m\geq 1$,  $\beta_1$, and $\beta_2$. Recalling that $|\beta|=m-|\beta_2|$, from H\"older's inequality, a change of variables, and Hardy-Littlewood-Sobolev inequality, we infer
\begin{equation*}
\begin{aligned}
\mathcal{B}_3\lesssim & \langle  t \rangle^{|\beta_1|}\||\xi|^{al+l+|\beta|-|\beta_1|}\phi\|_{L^{\frac{d}{s_1}}}\int_0^1\|\mathcal{D}^{r_2}\big((\partial^{\beta+\beta_2}\widehat{f})(\sigma \xi)\big)\|_{L^{\frac{2d}{d-2s_1}}}\, d\sigma \\
\lesssim & \langle  t \rangle^{|\beta_1|}\||\xi|^{al+l+|\beta|-|\beta_1|}\phi\|_{L^{\frac{d}{s_1}}}\Big(\int_0^1\sigma^{r_2-\frac{(d-2s_1)}{2}}\, d\sigma \Big) \|J^{r_2}\partial^{\beta+\beta_2}\widehat{f}\|_{L^{\frac{2d}{d-2s_1}}} \\
\lesssim & \langle  t \rangle^{|\beta_1|} \|J^{r}\widehat{f}\|_{L^{2}}\sim \langle  t \rangle^{|\beta_1|} \|\langle x \rangle^{r}f\|_{L^{2}}. 
\end{aligned}    
\end{equation*}
This concludes the estimate of the last case $|\beta_2|<m$. Consequently, the proof of Lemma \ref{linearestilemma} is complete.

\end{proof}

\subsection{Proof of Lemma \ref{unicontlemma}}

This part concerns the analysis of condition \eqref{linearEstimcomp1} in the propagation of fractional weights. 

\begin{proof}[Proof of Lemma \ref{unicontlemma}]
We let $P_{\phi}$ be the operator defined by the Fourier multiplier by the function $\phi$, i.e., $\widehat{P_{\phi}g}(\xi)=(\phi(\xi) \widehat{g}(\xi))$. We write
\begin{equation*}
\begin{aligned}
U(t)f=U(t)P_{\phi}f+U(t)(I-P_{\phi})f.
\end{aligned}    
\end{equation*}
Since $(I-P_{\phi})f$ satisfies the hypothesis of Lemma \ref{linearestilemma}, in particular, $\mathcal{F}\big(\partial^{\beta}(I-P_{\phi})f\big)(0)=0$ for all $|\beta|\leq m-1$, we infer $U(t)(I-P_{\phi})f\in L^2(|x|^{2(a+m+\frac{d}{2})}\, dx)$. Thus the hypothesis \eqref{uniquecontlemma1} implies
\begin{equation}\label{equnicont1}
\begin{aligned}
U(t)P_{\phi}f\in L^{2}(|x|^{2(a+m+\frac{d}{2})}\, dx).
\end{aligned}    
\end{equation}
Let $a+m+\frac{d}{2}=r_1+r_2$ with $r_1\in \mathbb{Z}^{+}$, $r_2\in [0,1)$, and $|\beta|\leq r$, by taking the Fourier transform, we have that \eqref{equnicont1} is equivalent to
\begin{equation}\label{equnicont2}
\begin{aligned}
\mathcal{F}(x^{\beta} U(t)P_{\phi}f)(\xi)=&\partial^{\beta}(e^{it\xi_1|\xi|^a}\phi(\xi)\widehat{f}(\xi))\\
=&\sum_{\beta_{1}+\beta_{2}+\beta_{3}=\beta}c_{\beta_{1},\beta_{2},\beta_{3}}(\partial^{\beta_{1}}(e^{it\xi_1|\xi|^a}))(\partial^{\beta_{2}}\widehat{f})(\partial^{\beta_{3}}\phi)\in H^{r_2}(\mathbb{R}^d),
\end{aligned}    
\end{equation}
which must follows for all $|\beta|= r_1$. The arguments in the proof of Lemma \ref{linearestilemma} (ii) show that the hypothesis $f\in H^{a(a+m+\frac{d}{2})}(\mathbb{R}^d)\cap L^2 (|x|^{2({a+m+\frac{d}{2}})}\, dx)$ is enough to assure 
\begin{equation*}
(\partial^{\beta_{1}}(e^{it\xi_1|\xi|^a}))(\partial^{\beta_{2}}\widehat{f})(\partial^{\beta_{3}}\phi)\in H^{r_2}(\mathbb{R}^d),    
\end{equation*}
if either $|\beta_3|\neq 0$, or $|\beta_2|\geq m$ (see the ideas around \eqref{eqdecay5.1} for the former condition, and \eqref{eqdecay5.2} for the latter). Thus \eqref{equnicont2} implies  
\begin{equation}\label{equnicont3}
\begin{aligned}
(\partial^{\beta_{1}}(e^{it\xi_1|\xi|^a}))(\partial^{\beta_{2}}\widehat{f})\phi\in H^{r_2}(\mathbb{R}^d),
\end{aligned}    
\end{equation}
for all $|\beta_2|\leq m-1$, where $|\beta_1|+|\beta_2|= r_1$. We emphasize that in the proof of Lemma \ref{linearestilemma} (ii) , we show that \eqref{equnicont3} is valid provided that $\partial^{\beta}\widehat{f}(0)=0$ for all $|\beta|\leq m-1$ (see \eqref{eqdecay5.3}). Thus, here we will show that \eqref{equnicont3} with $t\neq 0$ forces that  $\partial^{\beta}\widehat{f}(0)=0$ for all $|\beta|\leq m-1$. 
\\ \\
In what follows, we will use an inductive argument on the size of $m\geq 1$ to prove that $\partial^{\beta}\widehat{f}(0)=0$ for all $|\beta|\leq m-1$.
\\ \\ 
\underline{Case $m=1$}.  By \eqref{lineqeq1}, given that $|\beta_1|=r_1 $, we have that \eqref{equnicont3} is equivalent to
\begin{equation}\label{equnicont4}
\begin{aligned}
\partial^{\beta_1}(e^{it\xi_1|\xi|^a})\phi(\xi)\widehat{f}(\xi)=&|\xi|^{-2|\beta_1|}\sum_{l=1}^{|\beta_1|}t^{l}|\xi|^{al}P_{|\beta_1|+l}(\xi)e^{i\xi_1|\xi|^a t}\phi(\xi)\widehat{f}(\xi)\\
=:&\sum_{l=1}^{|\beta_1|}Q_{|\beta_1|,l}(a,t,\xi)e^{i\xi_1|\xi|^a t}\phi(\xi)\widehat{f}(\xi)\in H^{r_2}(\mathbb{R}^d),
\end{aligned}    
\end{equation}
for some homogeneous polynomial $P_{|\beta_1|+l}(\xi)$ of order $|\beta_1|+l$, with $P_{|\beta_1|+1}(\xi)\neq 0$. We observe that
\begin{equation*}
    |Q_{|\beta_1|,l}(a,t,\xi)|\lesssim \langle t \rangle^{r_1}|\xi|^{al+l-r_1}, \quad \xi \neq 0.
\end{equation*}
When $l\geq 2$, given that in this case $r_1+r_2=a+1+\frac{d}{2}<al+l+\frac{d}{2}$, the above estimate and Lemma \ref{lemmafracderivpolyno} imply $\mathcal{D}^{r_2}\big(Q_{|\beta_1|,l}(a,t,\xi)\phi(\xi)\big)\in L^2_{loc}(\mathbb{R}^d)$. This in turn allows us to argue exactly as in the cases (a), (b), and (c) in the proof of Lemma \ref{linearestilemma} to deduce
\begin{equation}\label{equnicont5}
\sum_{l=2}^{r_1}Q_{|\beta_1|,l}(a,t,\xi)e^{i\xi_1|\xi|^a t}\phi(\xi)\widehat{f}(\xi)\in H^{r_2}(\mathbb{R}^d).    
\end{equation}
Hence by \eqref{equnicont3} and \eqref{equnicont5}, it must follow
\begin{equation*}
Q_{r_1,1}(a,t,\xi)e^{i\xi_1|\xi|^a t}\phi(\xi)\widehat{f}(\xi)\in H^{r_2}(\mathbb{R}^d).
\end{equation*}
The above statement is equivalent to
\begin{equation*}
\begin{aligned}
Q_{r_1,1}(a,t,\xi)e^{i\xi_1|\xi|^a t}\phi(\xi)\widehat{f}(\xi)=&Q_{r_1,1}(a,t,\xi)e^{i\xi_1|\xi|^a t}\phi(\xi)(\widehat{f}(\xi)-\widehat{f}(0))\\
&+Q_{r_1,1}(a,t,\xi)\big(e^{i\xi_1|\xi|^a t}-1\big)\phi(\xi)\widehat{f}(0)+Q_{r_1,1}(a,t,\xi)\phi(\xi)\widehat{f}(0)\\
=:&\mathcal{C}_1+\mathcal{C}_2+\mathcal{C}_3\in H^{r_2}(\mathbb{R}^d).
\end{aligned}
\end{equation*}
To study $\mathcal{C}_1$, we write
\begin{equation*}
\mathcal{C}_1= Q_{r_1,1}(a,t,\xi)e^{i\xi_1|\xi|^a t}\phi(\xi)\sum_{j=1}^d\int_0^1 \xi_j \partial_{\xi_j}\widehat{f}(\sigma \xi)\, d\xi.    
\end{equation*}
Consequently, the fact that $\mathcal{C}_1\in H^{r_2}(\mathbb{R}^d)$ follows from the same arguments in the study of \eqref{eqdecay5.3} and \eqref{eqdecay6} in the proof of Lemma \ref{linearestilemma} (ii).  On the other hand, since
\begin{equation*}
\begin{aligned}
|Q_{r_1,1}(a,t,\xi)\big(e^{i\xi_1|\xi|^a t}-1\big)|\lesssim |t||\xi|^{2a+2-r_1},
\end{aligned}
\end{equation*}
it is not hard to deduce $\mathcal{C}_2\in H^{r_2}(\mathbb{R}^d)$. Summarizing, we conclude that \eqref{equnicont1} is valid if and only if $\mathcal{C}_3\in H^{r_2}(\mathbb{R}^d)$, in other words, by noticing that $Q_{r_1,1}(a,t,\xi)\phi\in L^2(\mathbb{R}^d)$ if $r_2>0$, it must be the case
\begin{equation}\label{equnicont6}
\begin{aligned}
Q_{r_1,1}(a,t,\xi)\phi \widehat{f}(0)=t |\xi|^{-2r_1}|\xi|^a P_{r_1+1}(\xi)\phi \widehat{f}(0)\in \dot{H}^{r_2}(\mathbb{R}^d),
\end{aligned}    
\end{equation}
where \eqref{equnicont6} holds true for all homogeneous polynomial $P_{r_1+1}(\xi)$ or order $r_1+1$, if and only of \eqref{equnicont1} is valid. We will show that if $P_{r_1+1}(\xi)\neq 0$,
\begin{equation}\label{equnicont7}
|\xi|^{-2r_1}|\xi|^a P_{r_1+1}(\xi)\phi\notin \dot{H}^{r_2}(\mathbb{R}^d). 
\end{equation}
Once we have established \eqref{equnicont7}, given that $t\neq 0$, \eqref{equnicont6} can only be true if $\widehat{f}(0)=0$ as required. Consequently, to complete the present case, it only remains to show \eqref{equnicont7}.  Notice that when $r_2=0$, the fact that $r_1=a+1+\frac{d}{2}$ shows $|\xi|^{-2r_1}|\xi|^a P_{r_1+1}(\xi)\phi\notin L^2(\mathbb{R}^d)$. On the other hand, when $0<r_2<1$, \eqref{equnicont7} is a consequence of the following claim.

\begin{claim}\label{claimL2integr}
Let $l_1$, $0<b<1$, $l_2\geq 1$ be an integer number, and $P_{l_2}(\xi)\neq 0$ be an homogeneous polynomial of order $l_2$. Assume 
\begin{equation}\label{claimL2integ0}
l_2-l_1-b\leq -\frac{d}{2}.    
\end{equation}
When $l_2=l_1$, since $0<b<1$, \eqref{claimL2integ0} is equivalent to $d=1$ and  $\frac{1}{2}\leq b<1$, in which case we will further assume that $l_2$ is an odd number. Then, under either of the previous assumptions it follows
\begin{equation}\label{claimL2integ}
    |\xi|^{-l_1}P_{l_2}(\xi)\phi\notin \dot{H}^{b}(\mathbb{R}^d).
\end{equation}
\end{claim}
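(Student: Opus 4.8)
The plan is to convert membership in $\dot H^b$ into an $L^2$ bound on the Stein derivative $\mathcal{D}^b$ (Theorem \ref{stein} and \eqref{propereq1}) and then exploit the exact homogeneity of the symbol near the origin. Write $g(\xi):=|\xi|^{-l_1}P_{l_2}(\xi)\phi(\xi)$ and $h(\xi):=|\xi|^{-l_1}P_{l_2}(\xi)$, so that $h$ is continuous on $\mathbb{R}^d\setminus\{0\}$, homogeneous of degree $\gamma:=l_2-l_1$, and $g\equiv h$ on a ball $\{|\xi|<\rho_0\}$ with $\rho_0>0$, since $\phi\equiv 1$ near the origin; the hypothesis \eqref{claimL2integ0} reads $\gamma\le b-\tfrac d2$. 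I will split into the cases $\gamma\le-\tfrac d2$ and $-\tfrac d2<\gamma\le b-\tfrac d2$.

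In the first case I argue by failure of local square integrability. Since $P_{l_2}\not\equiv 0$ we have $|P_{l_2}(\xi)|\gtrsim|\xi|^{l_2}$ on some nonempty open cone, hence $|g(\xi)|\gtrsim|\xi|^{\gamma}$ there for $|\xi|<\rho_0$, and $2\gamma\le -d$ makes $\int_{0<|\xi|<\rho_0}|\xi|^{2\gamma}\,d\xi$ diverge, so $g\notin L^2_{\mathrm{loc}}(\mathbb{R}^d)$ (when $\gamma\le-d$ the object $g$ is only defined by analytic continuation / Hadamard finite part, but it still coincides with this genuine function away from the origin, which is all that is used). On the other hand any compactly supported element of $\dot H^b(\mathbb{R}^d)$ with $b>0$ lies in $L^2$: its Fourier transform is entire, hence bounded near $0$, and $|\xi|^b\widehat g\in L^2$ gives $\widehat g\in L^2(\{|\xi|\ge 1\})$. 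This contradiction yields $g\notin\dot H^b$.

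For the second case, $-\tfrac d2<\gamma$ gives $g\in L^2(\mathbb{R}^d)$, so by Theorem \ref{stein} and \eqref{equiv} it suffices to show $\|\mathcal{D}^bg\|_{L^2}=\infty$. Fix $\xi$ with $0<|\xi|<\rho_0/2$ and restrict the integral defining $(\mathcal{D}^bg(\xi))^2$ to $\{|\eta|<|\xi|\}$; there $|\eta|<\rho_0$, so $g=h$, and the substitution $\eta=|\xi|z$ together with $h(|\xi|z)=|\xi|^{\gamma}h(z)$ gives
\begin{equation*}
\big(\mathcal{D}^bg(\xi)\big)^2\ \ge\ |\xi|^{2\gamma-2b}\,\Psi\!\big(\tfrac{\xi}{|\xi|}\big),\qquad \Psi(\omega):=\int_{|z|<1}\frac{|h(\omega)-h(z)|^2}{|\omega-z|^{d+2b}}\,dz\ \in[0,\infty),\quad \omega\in S^{d-1}.
\end{equation*}
Integrating in polar coordinates (Tonelli, $\Psi\ge 0$),
\begin{equation*}
\|\mathcal{D}^bg\|_{L^2(\mathbb{R}^d)}^2\ \ge\ \int_{|\xi|<\rho_0/2}|\xi|^{2\gamma-2b}\,\Psi\!\big(\tfrac{\xi}{|\xi|}\big)\,d\xi\ =\ \Big(\int_0^{\rho_0/2}\rho^{\,2\gamma-2b+d-1}\,d\rho\Big)\Big(\int_{S^{d-1}}\Psi(\omega)\,d\sigma(\omega)\Big),
\end{equation*}
and the radial integral is $+\infty$ precisely when $2\gamma-2b+d\le 0$, i.e. when $l_2-l_1-b\le-\tfrac d2$, which is exactly the hypothesis. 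Thus the proof reduces to showing $\int_{S^{d-1}}\Psi\,d\sigma>0$.

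This last step is the heart of the matter, and the only genuinely delicate point besides the rescaling bound above; it is where $P_{l_2}\ne 0$ and (in the borderline case $l_1=l_2$) the oddness of $l_2$ are used. If $\int_{S^{d-1}}\Psi\,d\sigma=0$ then $\Psi(\omega)=0$ for a.e. $\omega$, i.e. for a.e. $\omega\in S^{d-1}$ one has $h(z)=h(\omega)$ for a.e. $z$ in the unit ball; evaluating along rays, $t^{\gamma}h(\omega')$ is a.e. constant in $t\in(0,1)$ for a.e. $\omega'\in S^{d-1}$. If $\gamma\ne 0$ this forces $h\equiv 0$ on $S^{d-1}$, hence $h\equiv 0$, hence $P_{l_2}\equiv 0$ — impossible. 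If $\gamma=0$ (so $l_1=l_2$, and by \eqref{claimL2integ0} necessarily $d=1$, $\tfrac12\le b<1$, with $l_2$ odd) then $h$ equals a constant $c_0$ a.e. on $S^{d-1}$, so $h\equiv c_0$ and $P_{l_2}(\xi)=c_0|\xi|^{l_2}$; since $l_2$ is odd, $|\xi|^{l_2}$ is not a polynomial, forcing $c_0=0$ and again $P_{l_2}\equiv 0$ — impossible. Hence $\Psi$ is not a.e. zero, $\int_{S^{d-1}}\Psi\,d\sigma>0$, and $\|\mathcal{D}^bg\|_{L^2}=\infty$, completing the proof; everything else is bookkeeping with the constraints on $l_1,l_2,b,d$.
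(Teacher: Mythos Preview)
Your proof is correct and takes a genuinely different, more conceptual route than the paper. The paper proceeds by a case analysis on the sign of $\gamma=l_2-l_1$ (the three cases $\gamma=0$, $\gamma<0$, $\gamma>0$) and for each one hand-builds an explicit comparison region in the $y$-variable (e.g.\ a truncated cone $\{y:\,y/|y|\in A_d,\ |y|\le c|x|\}$) on which $|g(x)-g(y)|$ is bounded below; this yields the pointwise lower bound $\mathcal{D}^b g(x)\gtrsim |x|^{\gamma-b}$ on a conical set near the origin, whose $L^2$-norm diverges under \eqref{claimL2integ0}. Your approach instead splits on whether $g\in L^2$: when $\gamma\le -d/2$ you invoke the structural fact that any compactly supported element of $\dot H^b$ has entire Fourier transform and hence lies in $L^2$; when $-d/2<\gamma$ you exploit exact homogeneity through the rescaling $\eta=|\xi|z$ to obtain the clean lower bound $(\mathcal{D}^b g(\xi))^2\ge |\xi|^{2\gamma-2b}\Psi(\xi/|\xi|)$, reducing everything to the positivity of a single spherical integral, which you then deduce from a short rigidity argument (if $\Psi\equiv 0$ then $h$ is constant, forcing $P_{l_2}\equiv 0$, with the oddness of $l_2$ used exactly in the borderline $\gamma=0$, $d=1$ case). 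Your method avoids the geometric case-by-case construction and is arguably more elegant; the paper's approach is more elementary in that it never appeals to the Fourier-side description of $\dot H^b$ and produces an explicit pointwise lower bound on $\mathcal{D}^b g$ rather than only divergence of its $L^2$-norm.
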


\begin{proof}[Proof of Claim \ref{claimL2integr}]
We will show that there exists a set $\widetilde{\mathcal{A}}$ such that
\begin{equation}\label{claimL2integ2}
    \mathcal{D}^{b}\big(|\xi|^{-l_1}P_{l_2}(\xi)\phi\big)(x)\chi_{\widetilde{\mathcal{A}}}\gtrsim |x|^{l_2-l_1-b}\chi_{\widetilde{\mathcal{A}}},
\end{equation}
and $|x|^{l_2-l_1-b}\chi_{\widetilde{\mathcal{A}}}\notin L^2(\mathbb{R}^d)$, where $\chi_{\widetilde{\mathcal{A}}}$ stands for the indicator function over the set $\widetilde{\mathcal{A}}$. Once we have established \eqref{claimL2integ2}, our conditions on $l_1,l_2$ and $b$ imply $\mathcal{D}^{b}\big(|\xi|^{-l_1}P_{l_2}(\xi)\phi\big)(x)\notin L^2(\mathbb{R}^d)$, thus Theorem \ref{stein} and \eqref{propereq1} yield \eqref{claimL2integ}. We divide the deduction of \eqref{claimL2integ2} in the cases $l_2-l_1=0$, and $l_2-l_1\neq 0$. 
\\ \\ 
\underline{Assume $l_2-l_1=0$}. Here $d=1$, $\frac{1}{2}\leq b<1$, with $l_1$ be an odd number. From the fact that $\phi(x)=1$, when $|x|\leq 1$ , and using that $P_{l_2}(\xi)$ is an odd function, we get
\begin{equation*}
\begin{aligned}
   \big(\mathcal{D}^{b}&\big(|\xi|^{-l_1}P_{l_2}(\xi)\phi\big)(x)\big)^2\chi_{\{0<x<1\}}(x)\\
   &\gtrsim \Big( \int_{\frac{x}{4}<y<0}\,\frac{||x|^{-l_1}P_{l_2}(x)\phi(x)-|y|^{-l_1}P_{l_2}(y)\phi(y)|^2}{|x-y|^{1+2b}} dy\Big)\chi_{\{0<x<1\}}(x)\\
   &\gtrsim  |x|^{-1-2b}\Big(\int_{\{\frac{x}{4}<y<0\}}\, dy\Big)\chi_{\{0<x<1\}}(x)\\
    &\gtrsim  |x|^{-2b}\chi_{\{0<x<1\}}(x).
\end{aligned}
\end{equation*}
Given that $\frac{1}{2}\leq b <1$, we have $|x|^{-b}\chi_{\{0<x<1\}}(x)\notin L^{2}(\mathbb{R})$. The proof of this case is complete.
\\ \\ 
\underline{Assume $l_2-l_1\neq 0$}. Let $0<\delta<1$ to be chosen later. When $d\geq 2$, since $P_{l_2}(\xi)\neq 0$, there exist $c_0>0$, and a subset $A_d\subset \mathbb{S}^{d-1}$ (where $\mathbb{S}^{d-1}$ denotes the $d$-dimensional sphere) with positive surface area, such that $c_0\leq |P_{l_2}(y)|\leq 2c_0$ for all $y\in A_d$. We set
\begin{equation}\label{Aset}
    \mathcal{A}_{d,\delta}=\{x\in \mathbb{R}^d: \, \frac{x}{|x|}\in A_d, 0<|x|\leq \delta \}.
\end{equation}
When $d=1$, we set $\mathcal{A}_{1,\delta}=\{x\in \mathbb{R}^{+}: |x|\leq \delta \}$, i.e., $\mathcal{A}_{1,\delta}$ is defined by \eqref{Aset} with $A_1=\{1\}$,  and we can take $c_0=|P_{l_2}(1)|$. Now, we consider two further cases: $l_2-l_1<0$, and $l_2-l_1>0$. 
\\ \\
\underline{Assume $l_2-l_1< 0$}. Let $\delta=\frac{1}{2}$, $x\in \mathcal{A}_{d,\delta}$ be fixed,  we define $\mathcal{B}_{d,l_1,l_2,x}=\{y\in \mathbb{R}^d:\,  \frac{y}{|y|}\in A_d, \, 0<|y|\leq \min\{3^{\frac{-1}{l_1-l_2}}|x|,\delta\}\}$. Thus, for $x\in \mathcal{A}_{d,\delta}$, and $y\in \mathcal{B}_{d,l_1,l_2,x}$, 
\begin{equation*}
\begin{aligned}
||x|^{-l_1}P_{l_2}(x)\phi(x)|=|x|^{-l_1+l_2}|P_{l_2}(\frac{x}{|x|})|\leq 2c_0 |x|^{-l_1+l_2},
\end{aligned}    
\end{equation*}
and
\begin{equation*}
\begin{aligned}
||y|^{-l_1}P_{l_2}(y)\phi(y)|=|y|^{-l_1+l_2}|P_{l_2}(\frac{y}{|y|})|\geq c_0\Big(3^{\frac{-1}{l_1-l_2}}|x|\Big)^{-l_1+l_2}.
\end{aligned}    
\end{equation*}
Hence, we have
\begin{equation*}
\begin{aligned}
||x|^{-l_1}P_{l_2}(x)\phi(x)-|y|^{-l_1}P_{l_2}(y)\phi(y)|\geq c_0|x|^{-l_1+l_2},
\end{aligned}    
\end{equation*}
The previous estimate implies
\begin{equation*}
\begin{aligned}
   \big(\mathcal{D}^{b}\big(|\xi|^{-l_1}P_{l_2}(\xi)\phi\big)(x)\big)^2\chi_{\mathcal{A}_{d,\delta}}(x)\gtrsim &\Big( |x|^{-2l_1+2l_2-d-2b}\int_{\mathcal{B}_{d,l_1,l_2,x}}\, dy\Big)\chi_{\mathcal{A}_{d,\delta}}(x)\\
     \gtrsim & |x|^{-2l_1+2l_2-2b}\chi_{\mathcal{A}_{d,\delta}}(x),
\end{aligned}
\end{equation*}
where we have also used that $|y|\ll |x|$, if $y\in \mathcal{B}_{d,l_1,l_2,x}$. Now, it is not difficult to see, $|x|^{-2l_1+2l_2-2b}\chi_{\mathcal{A}_{d,\delta}}(x)\notin L^{2}(\mathbb{R}^d)$. This in turn completes the case $l_2-l_1<0$.
\\ \\
\underline{Assume $l_2-l_1> 0$}. Let $0<\delta<\min\{1,2^{\frac{1}{-l_1+l_2}}\}$ fixed.  Given $x\in \mathcal{A}_{d,\delta}$, we define  $\mathcal{B}_{d,l_1,l_2,x}=\{y\in \mathbb{R}^d:\,  \frac{y}{|y|}\in A_d, \, 2^{\frac{1}{-l_1+l_2}+1}|x|\leq |y| \leq \delta\}$. 
Thus, given $x\in \mathcal{A}_{d,\delta}$, and $y\in \mathcal{B}_{d,l_1,l_2,x}$, we have
\begin{equation*}
\begin{aligned}
||x|^{-l_1}P_{l_2}(x)\phi(x)|=|x|^{-l_1+l_2}|P_{l_2}(\frac{x}{|x|})|\leq 2c_0\big( 2^{\frac{-1}{-l_1+l_2}-1}|y|\big)^{-l_1+l_2},
\end{aligned}    
\end{equation*}
and
\begin{equation*}
\begin{aligned}
||y|^{-l_1}P_{l_2}(y)\phi(y)|=|y|^{-l_1+l_2}|P_{l_2}(\frac{y}{|y|})|\geq c_0|y|^{-l_1+l_2}.
\end{aligned}    
\end{equation*}
It follows,
\begin{equation*}
\begin{aligned}
||x|^{-l_1}P_{l_2}(x)\phi(x)-|y|^{-l_1+l_2}P_{l_2}(y)\phi(y)|\gtrsim |y|^{-l_1+l_2}.
\end{aligned}    
\end{equation*}
Using that $|x-y|\sim |y|$, whenever $x\in \mathcal{A}_{d,\delta}$, $y\in \mathcal{B}_{d,l_1,l_2,x}$, we deduce
\begin{equation*}
\begin{aligned}
   \big(\mathcal{D}^{b}\big(|\xi|^{-l_1}P_{l_2}(\xi)\phi\big)(x)\big)^2\chi_{\mathcal{A}_{d,\delta}}(x)\gtrsim &\Big(\int_{\mathcal{B}_{d,l_1,l_2,x}} |y|^{-2l_1+2l_2-d-2b}\, dy\Big)\chi_{\mathcal{A}_{d,\delta}}(x)\\
     \gtrsim & |x|^{-2l_1+2l_2-2b}\chi_{\mathcal{A}_{d,\delta}}(x).
\end{aligned}
\end{equation*}
The previous estimate completes the proof of the case $l_2-l_1>0$, and in consequence the proof of Claim \ref{claimL2integr} is complete. 
\end{proof}
Next, we deal with the case $m\geq 2$ in \eqref{equnicont3}. 
\\ \\
\underline{\bf Case $m\geq 2$}. We consider $f\in H^{a(a+m+\frac{d}{2})}(\mathbb{R}^d)\cap L^2 (|x|^{2({a+m+\frac{d}{2}})}\, dx)$, which satisfies \eqref{uniquecontlemma1}. By induction hypothesis, we have
\begin{equation}\label{inducthypho1}
    \partial^{\beta}\widehat{f}(0)=0,
\end{equation}
for all $|\beta|\leq m-2$. To complete the inductive step, we will show that \eqref{inducthypho1} is true when $|\beta|=m-1$. By the previous discussion (see \eqref{equnicont3}), and the inductive hypothesis \eqref{inducthypho1} (i.e., using Lemma \ref{linearestilemma} (ii)), we have that \eqref{uniquecontlemma1} is equivalent to \eqref{equnicont3} with $|\beta_2|=m-1$, i.e., by \eqref{lineqeq1}, 
\begin{equation}
\begin{aligned}
\partial^{\beta_1}(e^{it\xi_1|\xi|^a})\phi(\xi)\partial^{\beta_2}\widehat{f}(\xi)=:&\sum_{l=1}^{|\beta_1|}Q_{|\beta_1|,l}(a,t,\xi)e^{i\xi_1|\xi|^a t}\phi(\xi)\partial^{\beta_2}\widehat{f}(\xi)\in H^{r_2}(\mathbb{R}^d),    
\end{aligned}
\end{equation}
where $a+m+\frac{d}{2}=r_1+r_2$, with $r_1\in \mathbb{Z}^{+}$, $r_2\in [0,1)$, and   $|\beta_1|=r_1-|\beta_2|\leq a+1+\frac{d}{2}$. However, the restrictions on $|\beta_1|, |\beta_2|$ shows that the same arguments  developed for $m=1$ above are applicable in the current case with $\partial^{\beta_2}\widehat{f}$ instead of $\widehat{f}$. Thus, it must be the case  that $\partial^{\beta_2}\widehat{f}(0)=0$, and since $|\beta_2|=m-1$ is arbitrary, \eqref{inducthypho1} follows for any $|\beta|=m-1$. This completes the inductive step and the proof of Lemma \ref{unicontlemma}.
\end{proof}


\section{Well-posedness in weighted spaces}\label{localweighted1}

This part aims to deduce Theorem \ref{LWPweights}. We divide the proof of this theorem into two parts. The first one deals with the deduction of (i) in  Theorem \ref{LWPweights}, which is based on energy estimates. We follow by proving parts (ii) and (iii) as a consequence of (i) and Lemma \ref{linearestilemma}. 

\subsection{Proof of Theorem \ref{LWPweights} (i)}

Here we establish LWP in the space $H^{s}(\mathbb{R}^d)\cap L^{2}(|x|^{2r}\, dx)$, $0<r<1$. This result is the first step towards the study of higher-order weights. The proof presented here is based on weighted energy estimates, which depend on some approximation of the function $\langle x \rangle$ in the spirit of the work in \cite{FonPO}.
\\ \\
Let $N\in \mathbb{Z}^{+}$, we introduce the truncated weights $\tilde{w}_N : \mathbb{R} \rightarrow \mathbb{R}$ satisfying 
 \begin{equation}
 \tilde{w}_{N}(x)=\left\{\begin{aligned} 
 &\langle x \rangle, \text{ if } |x|\leq N, \\
 &2N, \text{ if } |x|\geq 3N
 \end{aligned}\right.
 \end{equation}
in such a way that $\tilde{w}_N(x)$ is smooth and non-decreasing in $|x|$ with $\tilde{w}'_N(x) \leq 1$ for all $x>0$  and there exists a constant $c$ independent of $N$ from which $|\tilde{w}^{(j)}_N(x)|   \leq |\partial_x^{j}(\langle x \rangle)|$, $j=2,3$. We then define the $d$-dimensional weights 
\begin{equation}\label{intro2}
w_N(x)=\tilde{w}_N(|x|), \text{ where } |x|=\sqrt{x_1^2+\dots+x_d^2}.
\end{equation}
Consequently, for fixed $0<r<1$, the definition of the $\omega_N$ yields $|\partial^{\alpha}( w_{N}^{r}) | \lesssim 1$, for all multi-index $1\leq |\alpha|\leq 3$, where the implicit constant is independent of $N$.
\\ \\
Next, we introduce the local existence theory in Sobolev spaces $H^s(\mathbb{R}^d)$ to be used in our arguments.
\begin{lemma}\label{comwellp}
Let $s>\frac{d}{2}+1$, $k\geq 2$ be an integer number, and $\nu\in \{1,-1\}$. Then for any $u_0 \in H^s(\mathbb{R}^d)$, there exist $T=T(\left\|u_0\right\|_{H^s})>0$ and a unique solution $u\in C([0,T]; H^s(\mathbb{R}^d))$ of the IVP \eqref{HBO-ZK}. In addition, the flow-map $u_0 \mapsto u(t)$ is continuous in the $H^s$-norm. Moreover, there exists a function $\rho\in C([0,T];[0,\infty))$ such that
\begin{equation*}
\|u(t)\|_{H^s}\leq \rho(t), \hspace{0.5cm} t\in [0,T].
\end{equation*}
\end{lemma}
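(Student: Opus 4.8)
The plan is to prove this by the standard parabolic-regularization / energy-method scheme, since the dispersive part $\partial_{x_1}D^a$ is skew-adjoint and contributes nothing to the $L^2$-based energy. First I would set up the regularized problem $\partial_t u^\epsilon - \partial_{x_1}D^a u^\epsilon + \nu (u^\epsilon)^{k-1}\partial_{x_1}u^\epsilon = -\epsilon(-\Delta)^{m} u^\epsilon$ (or, more simply, work with the frequency-truncated equations $\partial_t u^N - P_N\partial_{x_1}D^a u^N + \nu P_N((P_N u^N)^{k-1}\partial_{x_1}P_N u^N) = 0$ with $P_N$ a Littlewood--Paley projection), for which global-in-time solutions exist by the ODE theory in $H^s$. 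The key is a uniform-in-$\epsilon$ (or $N$) a priori estimate: applying $J^s$ to the equation, pairing with $J^s u$, and using that $\langle J^s \partial_{x_1}D^a u, J^s u\rangle = 0$ by skew-symmetry, the only term to control is the nonlinear one $\nu \langle J^s(u^{k-1}\partial_{x_1}u), J^s u\rangle$. Here the commutator/Leibniz estimate \eqref{fLR} together with the integration-by-parts trick $\langle u^{k-1}J^s\partial_{x_1}u, J^s u\rangle = -\tfrac12\langle \partial_{x_1}(u^{k-1})J^s u, J^s u\rangle$ gives
\begin{equation*}
\Big|\frac{d}{dt}\|u\|_{H^s}^2\Big| \lesssim \|u\|_{W^{1,\infty}}^{k-2}\big(\|u\|_{W^{1,\infty}}+\|\nabla u\|_{L^\infty}\big)\|u\|_{H^s}^2 \lesssim \|u\|_{H^s}^{k},
\end{equation*}
using the Sobolev embedding $H^{s-1}(\mathbb{R}^d)\hookrightarrow W^{1,\infty}(\mathbb{R}^d)$ valid since $s>\tfrac d2+1$. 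A Gronwall / comparison-ODE argument then yields a time $T=T(\|u_0\|_{H^s})>0$ on which $\|u^\epsilon(t)\|_{H^s}$ stays bounded by some $\rho(t)$, uniformly in $\epsilon$, which also furnishes the last assertion of the lemma.

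Next I would pass to the limit: the uniform bound plus the equation gives uniform control of $\partial_t u^\epsilon$ in a weaker norm (e.g. $H^{s-1-a}$ or $H^{s-1}$), so Aubin--Lions compactness produces a subsequence converging in $C([0,T];H^{s'}_{loc})$ for $s'<s$, enough to pass to the limit in the nonlinear term and obtain a solution $u\in L^\infty([0,T];H^s)\cap C([0,T];H^{s'})$ of \eqref{HBO-ZK}. Uniqueness is obtained at the $L^2$ level: if $u,v$ are two solutions with the same data, then $w=u-v$ satisfies an equation whose nonlinear difference is $\nu(u^{k-1}\partial_{x_1}u - v^{k-1}\partial_{x_1}v)$; writing this as $\nu u^{k-1}\partial_{x_1}w + \nu(u^{k-1}-v^{k-1})\partial_{x_1}v$ and pairing with $w$, integrating by parts to move a derivative off $w$ in the first term, one gets $\frac{d}{dt}\|w\|_{L^2}^2 \lesssim (\|u\|_{W^{1,\infty}}+\|v\|_{W^{1,\infty}})^{k-1}\|w\|_{L^2}^2$, and Gronwall forces $w\equiv 0$. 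The same difference estimate, upgraded via the Bona--Smith argument (mollify the data, compare the smooth and rough solutions in $L^2$, interpolate with the uniform $H^s$ bound), gives continuity of the flow map $u_0\mapsto u$ in $H^s$ and, in particular, that $u\in C([0,T];H^s)$ rather than merely $C_w$.

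The main obstacle I anticipate is the careful handling of the nonlinear energy estimate when $a<1$: unlike the $a\ge 1$ case, the term $\partial_{x_1}D^a u$ does not provide a full derivative's worth of smoothing that could help absorb the nonlinearity, but since $s>\tfrac d2+1$ we are in a subcritical regime and no smoothing is needed — the estimate closes purely by Sobolev embedding and the fractional Leibniz rule, so the ``obstacle'' is really just bookkeeping in the commutator terms rather than anything structural. A secondary technical point is ensuring the skew-symmetry identity $\langle J^s\partial_{x_1}D^a u, J^s u\rangle=0$ is legitimate at this regularity, which is why one works first with the regularized equations where all quantities are smooth and only invokes the identity there.
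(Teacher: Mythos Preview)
Your proposal is correct and follows exactly the approach the paper indicates: the paper itself omits the proof, stating only that it ``follows by standard parabolic regularization argument'' (add $-\mu\Delta u$, take $\mu\to 0$) and citing \cite{CunhaPastor2014,Iorio1986}. Your outline fleshes out precisely this scheme; the only quibbles are cosmetic (the final power should be $\|u\|_{H^s}^{k+1}$ rather than $\|u\|_{H^s}^{k}$, and the embedding you use is $H^s\hookrightarrow W^{1,\infty}$, not $H^{s-1}\hookrightarrow W^{1,\infty}$), neither of which affects the argument.
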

The proof of Lemma \ref{comwellp} follows by standard parabolic regularization argument. Broadly speaking, an additional factor $-\mu \Delta u$ is added to the equation, after which the limit $\mu \to 0$ is taken. The argument applying this technique for the IVP \eqref{HBO-ZK} follow by similar considerations in \cite{CunhaPastor2014,Iorio1986}, so we omit its proof. Notice that in Remark (a) in the introduction (see the discussion below \eqref{wellposcond}), we also mention some other alternatives for Lemma \ref{comwellp} where our results hold with $s\leq \frac{d}{2}+1$.
\\ \\
Now we turn to the proof of Theorem \ref{LWPweights}. Given $0<r<1$, $u_0\in H^s(\mathbb{R}^d)\cap L^2(|x|^{2r} \, dx)$, $s>\frac{d}{2}+1$, by Lemma \ref{comwellp} there exists a time $T>0$, and $u\in C([0,T]; H^s(\mathbb{R}^d))$ solution of the IVP \eqref{HBO-ZK} with initial condition $u_0$. Moreover,
\begin{equation}\label{eqadw0}
\sup_{t\in [0,T]}\|u(t)\|_{H^s}\leq c(\|u_0\|_{H^s})<\infty.
\end{equation}
Accordingly, to prove (i) in Theorem \ref{LWPweights} with $0<r<1$, we only have to establish the persistence property and the continuous dependence in the space $L^2(|x|^{2r}, dx)$. In what follows, we will show
\begin{equation}\label{eqpersi1}
u\in L^{\infty}([0,T];L^2(|x|^{2r} \, dx)).
\end{equation}
Once this has been established, the fact that $u\in C([0,T];L^2(|x|^{2r} \, dx))$, and the continuous dependence follow by a simple adaptation of the arguments in \cite[Theorem 1.3]{CunhaPastor2014} (see also \cite[Proposition 1.1]{OscarWHBO}). Moreover, by the continuous dependence of the flow-map data solution provided by Lemma \ref{comwellp}, and taking the limit in our estimates below, we will assume that the solution $u$ of \eqref{HBO-ZK} is sufficiently regular to perform all the arguments developed in this section.

Let $\omega_{N}$ given by \eqref{intro2}. By multiplying the equation in \eqref{HBO-ZK} by $\omega_{N}^{2r} u$, and then integrating on the spatial variable, we arrive at
\begin{equation}\label{awdiffeeq1}
\begin{aligned}
    \frac{1}{2} \frac{d}{dt} \int (\omega_{N}^{2r} u)^2\, dx-\underbrace{\int \big(\omega_{N}^{r} \partial_{x_1}D^{a} u\big)( \omega_{N}^r u) \, dx}_{\mathcal{A}_1}+\underbrace{\nu\int (\omega_{N}^r u^{k-1} \partial_{x_1}u)  (\omega_{N}^r u)\, dx}_{\mathcal{A}_2}=0.
\end{aligned}
\end{equation}
At once we find
\begin{equation}\label{eqadw0.1}
\begin{aligned}
\big|\mathcal{A}_2 \big| \leq \|u\|_{L^{\infty}}^{k-2}\|\partial_{x_1}u\|_{L^{\infty}}\| \omega_{N}^ru\|_{L^2}^2.
\end{aligned}
\end{equation}
Next, we divide the estimate of $\mathcal{A}_1$ in two cases determined by the values of the dispersion $a$, namely, $1\leq a<2$, and $0<a<1$. But before that, we will set some notation. We recall that the Riesz transform is denoted by $\mathcal{R}_j=-\partial_{x_j}D^{-1}$. Since the Hilbert and the Riesz transforms share the same $L^2$-multiplier restricted to one-dimension, for simplicity, we will also denote by $\mathcal{R}_1$ the Hilbert transform when $d=1$.
\\ \\
\underline{Assume $1\leq a <2$}.  When $a=1$, we follow the convection $D^{a-1}=I$ to be the identity operator. By writing $\partial_{x_1}=-\mathcal{R}_1D$, and $D^2=-\Delta$, we divide our considerations as follows
\begin{equation}\label{eqadw1}
\begin{aligned}
\omega_{N}^{r} \partial_{x_1}D^{a} u=&\omega_{N}^{r}\mathcal{R}_1\Delta D^{a-1}u\\
=&[\omega_{N}^{r},\mathcal{R}_1]\Delta D^{a-1}u+\mathcal{R}_1\big([\omega_{N}^{r},D^{a-1}]\Delta u\big)+\mathcal{R}_1 D^{a-1}([\omega_{N}^{r}, \Delta]u)\\
&+\mathcal{R}_1 D^{a-1}\Delta(\omega_{N}^{r} u)\\
=:& \mathcal{A}_{1,1}+\mathcal{A}_{1,2}+\mathcal{A}_{1,3}+\mathcal{R}_1 D^{a-1}\Delta(\omega_{\lambda} u),
\end{aligned}
\end{equation}
and when $a=1$, the above identity reduces to
\begin{equation}\label{eqadw1.1}
\begin{aligned}
\omega_{N}^{r} \partial_{x_1}Du=&[\omega_{N}^{r},\mathcal{R}_1]\Delta u+\mathcal{R}_1 ([\omega_{N}^{r}, \Delta]u)+\mathcal{R}_1 \Delta(\omega_{N}^{r} u)\\
=& \mathcal{A}_{1,1}+\mathcal{A}_{1,3}+\mathcal{R}_1 D^{a-1}\Delta(\omega_{\lambda} u).
\end{aligned}
\end{equation}
By going back to the integral defining $\mathcal{A}_{1}$, and rewriting $\mathcal{R}_1D^{a-1}\Delta=\partial_{x_1}D^a$ (which is a skew-symmetric operator), we deduce that the last terms on the right-hand side of \eqref{eqadw1} and \eqref{eqadw1.1} yield null contribution to the estimate. Now, when $d=1$, the estimate of the $L^2$-norm of $\mathcal{A}_{1,1}$ is a consequence of Proposition \ref{CalderonComGU}, and  the fact that $\partial_x^2(\omega_{N}^r)$ is uniformly bounded by constant independent of $N\geq 1$. Thus, we will assume $d\geq 2$ in  $\mathcal{A}_{1,1}$. We write
\begin{equation*}
\begin{aligned}
\mathcal{A}_{1,1}=&\sum_{j=1}^d\Big( [\omega_{N}^{r},\mathcal{R}_1]\partial_{x_j}^2D^{a-1}u+\sum_{|\beta|=1}\frac{1}{\beta!}\partial^{\beta}(\omega_{N}^{r}) D_{R_1}^{\beta}\partial_{x_j}^2D^{a-1}u \\
&-\sum_{|\beta|=1}\frac{1}{\beta!}\partial^{\beta}(\omega_{N}^{r}) D_{R_1}^{\beta}\partial_{x_j}^2D^{a-1}u \Big).
\end{aligned}
\end{equation*}
An application of Proposition \ref{propconmu} and Lemma \ref{lemmaRieszdeco} allows us to conclude
\begin{equation}\label{eqadw2}
\begin{aligned}
\|\mathcal{A}_{1,1}\|_{L^2}\lesssim & \sum_{|\alpha|=2} \|\partial^{\alpha}(\omega_{N}^{r})\|_{L^{\infty}}\|D^{a-1}u\|_{L^2}+\sum_{j=1}^d \sum_{|\beta|=1} \|\partial^{\beta}(\omega_{N}^{r})\|_{L^{\infty}}\|D_{R_1}^{\beta}\partial_{x_j}^2 D^{a-1} u\|_{L^2} \\
\lesssim & \sum_{1\leq |\alpha| \leq 2} \|\partial^{\alpha}(\omega_{N}^{r})\|_{L^{\infty}}\|u\|_{H^a}.
\end{aligned}
\end{equation}
Since $0<r<1$, the definition of $\omega_N$ implies that all the derivatives $1\leq |\beta|\leq 3$ of $\partial^{\alpha}(\omega_{N}^{r})$ are bounded by a constant independent of $N$.  Next, when $1<a<2$, since $\mathcal{R}_1$ determines a bounded operator in $L^2(\mathbb{R}^d)$, by Proposition \ref{propcomm1}, we have
\begin{equation}\label{eqadw3}
\begin{aligned}
\|\mathcal{A}_{1,2}\|_{L^2} \lesssim \|[\omega_{N}^r,D^{a-1}]D^2u\|_{L^2}&=\|[\omega_{N}^r,D^{a-1}]D^{2-a}D^a u\|_{L^2}\\
&\lesssim \|\nabla (\omega_{N}^r)\|_{L^{\infty}}\|u\|_{H^{a}}.
\end{aligned}
\end{equation}
On the other hand,
\begin{equation*}
\begin{aligned}
\mathcal{A}_{1,3}=&-\mathcal{R}_1 D^{a-1}\big((\Delta (\omega_{N}^{r}) u+2(\nabla (\omega_{N}^{r})) \cdot \nabla u\big)\\
=&-\mathcal{R}_1 D^{a-1}\big((\Delta (\omega_{N}^{r})\big) u)-2\sum_{j=1}^d\mathcal{R}_1 D^{a-1}\big((\partial_{x_j}(\omega_{N}^{r})) \cdot \partial_{x_j} u\big).
\end{aligned}
\end{equation*}
Now, when $a=1$, the previous decomposition yields
\begin{equation*}
\begin{aligned}
\|\mathcal{A}_{1,3}\|_{L^2}\lesssim \|\Delta(\omega_N^r)\|_{L^{\infty}}\|u\|_{L^2}+\sum_{j=1}^{d}\|\partial_{x_j}(\omega_N^r)\|_{L^{\infty}}\|\partial_{x_j}u\|_{L^2}\lesssim \|u\|_{H^1},
\end{aligned}    
\end{equation*}
where the implicit constant is independent of $N\geq 1$. To proceed with the estimate of $\mathcal{A}_{1,3}$ when $1<a<2$, we first require the following result.
\begin{claim}\label{claim1}
Let $0<r<1$, and $r<s< 3$. It follows
\begin{equation}\label{eqdaw3.1}
\|D^{s}(\omega_N^r)\|_{L^{\infty}}\lesssim 1,
\end{equation}
\end{claim}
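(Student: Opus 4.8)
\textbf{Proof plan for Claim \ref{claim1}.}

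The plan is to estimate $\|D^{s}(\omega_N^r)\|_{L^{\infty}}$ by splitting the Riesz potential into a low-frequency (local, near the singularity of the multiplier $|\xi|^s$) piece and a high-frequency piece, exploiting that $\omega_N^r$ is a bounded function with uniformly (in $N$) bounded derivatives up to order $3$. First I would fix a dyadic partition of unity $1=\psi_0(\xi)+\sum_{j\geq 1}\psi(2^{-j}\xi)$ and write $D^s(\omega_N^r)=\sum_{j\geq 0} P_j D^s(\omega_N^r)$, where $P_j$ is the Littlewood--Paley projection onto frequencies $|\xi|\sim 2^j$ (and $|\xi|\lesssim 1$ for $j=0$). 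On each piece the symbol is $|\xi|^s\psi(2^{-j}\xi)$, whose inverse Fourier transform $K_j$ satisfies $\|K_j\|_{L^1}\lesssim 2^{js}$ by scaling. Hence $\|P_j D^s(\omega_N^r)\|_{L^\infty}\lesssim 2^{js}\|P_j(\omega_N^r)\|_{L^\infty}$ is too lossy for large $j$; instead, for $j\geq 1$ one should move three derivatives onto $\omega_N^r$, writing $|\xi|^s\psi(2^{-j}\xi)=\big(|\xi|^{s-3}\psi(2^{-j}\xi)\big)\cdot|\xi|^3$ (schematically), so that $\|P_jD^s(\omega_N^r)\|_{L^\infty}\lesssim 2^{j(s-3)}\|D^3(\omega_N^r)\|_{L^\infty}\lesssim 2^{j(s-3)}$ using the uniform bound on third derivatives of $\omega_N^r$ and $s<3$; summing the geometric series in $j\geq 1$ gives an $N$-independent bound. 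For the low-frequency block $j=0$, the symbol $|\xi|^s\psi_0(\xi)$ is a compactly supported function with an integrable singularity at the origin (since $s>0>-d$), so its convolution kernel is an $L^1$ function (indeed decays like $|x|^{-d-s}$ at infinity and is locally integrable), and thus $\|P_0D^s(\omega_N^r)\|_{L^\infty}\lesssim \|\omega_N^r\|_{L^\infty}\lesssim N^r$ — which is \emph{not} uniform in $N$, so this crude bound must be refined.

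To fix the low-frequency contribution, instead of using $\|\omega_N^r\|_{L^\infty}$ I would exploit that $D^s$ with $0<s<1$ admits the Stein-derivative / pointwise representation: for $0<s<1$, $D^s g(x)\sim \mathcal{D}^s g(x) = \big(\int |g(x)-g(y)|^2|x-y|^{-d-2s}\,dy\big)^{1/2}$ up to lower-order terms, via Theorem \ref{stein} and \eqref{propereq1}. Since $0<r<1$ forces $\omega_N^r$ to be globally Lipschitz with constant $\lesssim 1$ uniformly in $N$ (as $|\partial^\alpha \omega_N^r|\lesssim 1$ for $1\leq|\alpha|\leq 3$), the difference quotient $|\omega_N^r(x)-\omega_N^r(y)|\lesssim \min\{|x-y|,1\}$, so $\int |x-y|^{-d-2s}\min\{|x-y|^2,1\}\,dy\lesssim 1$ uniformly in $x$ and $N$, giving $\|\mathcal{D}^s(\omega_N^r)\|_{L^\infty}\lesssim 1$. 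For $1\leq s<3$ I would write $s=\lfloor s\rfloor + s'$ with $s'\in[0,1)$, move the integer derivatives $\lfloor s\rfloor\in\{1,2\}$ onto $\omega_N^r$ directly (each bounded by a constant independent of $N$ by hypothesis), and apply the previous $0<s'<1$ argument (or, if $s'=0$, simply use the integer derivative bound) to the resulting function $\partial^\beta(\omega_N^r)$, which is again Lipschitz with uniform constants using $|\partial^\alpha\omega_N^r|\lesssim 1$ up to $|\alpha|=3$. This is where the hypothesis $s<3$ is used crucially, since controlling $\mathcal{D}^{s'}$ of a first- or second-order derivative of $\omega_N^r$ requires one more derivative, i.e. up to order $3$.

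The main obstacle is avoiding the non-uniform bound $\|\omega_N^r\|_{L^\infty}\lesssim N^r$; the whole point is that $D^s$ is a \emph{homogeneous} operator, so it sees only differences of $\omega_N^r$, and those are controlled by derivatives of $\omega_N^r$, which \emph{are} uniform in $N$. The clean way to make this rigorous for non-integer $s$ is the Stein-derivative characterization in Theorem \ref{stein}, reducing everything to the elementary estimate $\int_{\mathbb{R}^d}\min\{|z|^2,1\}|z|^{-d-2s'}\,dz<\infty$ for $0<s'<1$, together with peeling off integer derivatives when $s\geq 1$. A secondary technical point is justifying that $\|D^{s'}(\partial^\beta \omega_N^r)\|_{L^\infty}\sim \|\mathcal{D}^{s'}(\partial^\beta\omega_N^r)\|_{L^\infty}$ at the $L^\infty$ level (rather than $L^p$, $p<\infty$) — this follows because $\partial^\beta\omega_N^r$ is a bounded function, so the missing low-frequency/boundedness part of the equivalence $\|J^{s'}g\|\sim\|g\|+\|\mathcal{D}^{s'}g\|$ contributes only an $N$-independent constant, and one can alternatively localize in frequency and handle the smooth bounded low-frequency part separately exactly as in the $j=0$ Littlewood--Paley block above, now with the improved bound coming from differences.
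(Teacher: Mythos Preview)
Your approach has a genuine gap in the treatment of the low-frequency / Stein-derivative part. You claim that global Lipschitz continuity of $\omega_N^r$ (with constant $\lesssim 1$, which is correct) yields $|\omega_N^r(x)-\omega_N^r(y)|\lesssim \min\{|x-y|,1\}$, but Lipschitz only gives the $|x-y|$ bound; the $\lesssim 1$ bound would require $\omega_N^r$ to be \emph{bounded} uniformly in $N$, and it is not: $\omega_N^r(0)=1$ while $\omega_N^r(y)=(2N)^r$ for $|y|\geq 3N$, so the difference is $\sim N^r$. Consequently the far-field part $\int_{|z|>1}|\omega_N^r(x)-\omega_N^r(x-z)|^2|z|^{-d-2s}\,dz$ cannot be bounded by $\int_{|z|>1}|z|^{-d-2s}\,dz$ as you write. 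A symptom of this gap is that your argument never invokes the hypothesis $s>r$, which is essential (and sharp): with only $0<s\leq r$ the claim fails already for $\langle x\rangle^r$. There is also the secondary issue you flag yourself, namely that the equivalence $D^s\sim \mathcal{D}^s$ from Theorem~\ref{stein} is stated for $L^p$ with $p<\infty$, not $L^\infty$; your hand-wave around this is not a proof.

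The paper's proof proceeds quite differently and avoids both issues. For $0<s<1$ it embeds $L^\infty\hookleftarrow J^{\delta}L^q$ with $q$ large and $0<\delta<\min\{(s-r)/2,1-s\}$, writes $D^{s}=D^{-(1-s)}\sum_j\mathcal R_j\partial_{x_j}$, and applies Hardy--Littlewood--Sobolev to reduce to $\|\partial_{x_j}(\omega_N^r)\|_{L^{p}}$ for some $p>\frac{d}{1-r}$; this is uniformly bounded precisely because $|\nabla(\omega_N^r)(x)|\lesssim \langle x\rangle^{r-1}$ decays at infinity (here $0<r<1$), and the condition $s>r$ is exactly what makes such an exponent $p$ available. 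The ranges $1\leq s<3$ are handled by peeling off one or two derivatives via $D=\sum_j\mathcal R_j\partial_{x_j}$ or $D^2=-\Delta$ and repeating the argument. To repair your route you would need to replace the false $\min\{|x-y|,1\}$ bound by the actual decay $|\nabla(\omega_N^r)|\lesssim\langle x\rangle^{r-1}$ and carry that through the Stein integral, at which point the computation essentially reproduces the paper's use of the hypothesis $s>r$.
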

where the implicit constant is independent of $N\geq 1$. 

\begin{proof}[Proof of Claim \ref{claim1}]
The proof follows the arguments in the one-dimensional setting presented in \cite[Proposition 2.14]{Riano2021}. Let us consider first the case $0<s<1$. We let $0<\delta<\min\{\frac{s-r}{2},1-s\}$, and $q>1$ large enough such that $\frac{1}{q}<\frac{\delta}{d}$, thus Sobolev embedding yields 
\begin{equation*}
\begin{aligned}
\begin{aligned}
\|D^{s}(\omega^{r}_N)\|_{L^{\infty}}\lesssim & \|D^{s}(\omega^{r}_N)\|_{L^{q}}+\|D^{s+\delta}(\omega^{r}_N)\|_{L^{q}}\\
\lesssim & \|D^{-(1-s)}D(\omega^{r}_N)\|_{L^{q}}+\|D^{-(1-(s+\delta))}D(\omega^{r}_N)\|_{L^{q}}.
\end{aligned}
\end{aligned}    
\end{equation*}
Let $l=0,1$, since $D=\sum_{j=1}^d\mathcal{R}_j\partial_{x_j}$, the continuity of the Riesz transform and Hardy-Littlewood-Sobolev inequality show
\begin{equation*}
\begin{aligned}
\|D^{-(1-(s+l\delta))}D(\omega^{r}_N)\|_{L^{q}}\lesssim & \sum_{j=1}^d \|D^{-(1-(s+l\delta))}\partial_{x_j}(\omega^{r}_N)\|_{L^{q}}\\
\lesssim & \sum_{j=1}^d \|\frac{1}{|x|^{d-(1-(s+l\delta))}}\ast \big(\partial_{x_j}(\omega^{r}_N)\big)\|_{L^{q}}\\
\lesssim &\sum_{j=1}^d\|\partial_{x_j}(\omega^{r}_N)\|_{L^{p_l}},
\end{aligned}    
\end{equation*}
where
\begin{equation*}
\frac{1}{p_l}=\frac{1}{q}+\frac{(1-(s+l\delta))}{d}.    
\end{equation*}
Hence, our choice of $\delta>0$ and $q$ imply $0<\frac{1}{p_l}<\frac{1-r}{d}$, thus $\|\partial_{x_j}(\omega^{r}_N)\|_{L^{p_l}}\lesssim 1$ with constant independent of $N$. 

On the other hand, when $1\leq s< 2$ by writing $s=1+\theta$ with $\theta\in [0,1)$, we have when $\theta>0$ that
\begin{equation*}
\begin{aligned}
D^{s}(\omega_N^r)=\sum_{j=1}\mathcal{R}_jD^{\theta}(\partial_{x_j}(\omega_N^r))=\sum_{j,m=1}\mathcal{R}_j\mathcal{R}_m D^{\theta-1}(\partial_{x_m}\partial_{x_j}(\omega_N^r)).
\end{aligned}    
\end{equation*}
If $\theta=0$, we just write $D^{s}(\omega_N^r)=\sum_{j=1}\mathcal{R}_j(\partial_{x_j}(\omega_N^r))$. Thus, the above decomposition allows us to argue exactly as in the case $0<s<1$ to obtain the desired result. Notice that when $s=2$, we have $D^2=-\Delta$, which directly implies \eqref{eqdaw3.1}. When $2<s<3$, writing $D^{s}(\omega_N^r)=-D^{s-2}(\Delta \omega_N^r)$, again the consideration for the first case $0<s<1$ yield the desired result.
\end{proof}

We continue with the study of $\mathcal{A}_{1,3}$ when $1<a<2$. By Claim \ref{claim1}, Proposition \ref{commutatorestim1} , it is seen that
\begin{equation}\label{eqadw4}
\begin{aligned}
\|\mathcal{A}_{1,3}\|_{L^2}\lesssim &\|[D^{a-1},\Delta(\omega_N^r)]u\|_{L^2}+\|\Delta(\omega_N^r)D^{a-1}u\|_{L^2}\\
 &+ \sum_{j=1}^d\big(\|[D^{a-1},\partial_{x_j}(\omega_N^r)]\partial_{x_j}u\|_{L^2}+\|\partial_{x_j}(\omega_N^r)\partial_{x_j}D^{a-1}u\|_{L^2}\big)\\
\lesssim &\big(\|D^{a-1}\big(\Delta(\omega_N^r)\big)\|_{L^{\infty}}+\sum_{j=1}^d \|D^{a-1}\big(\partial_{x_j}(\omega_N^r)\big)\|_{L^{\infty}}\big)\|u\|_{H^{a}}\\
\lesssim & \|u\|_{H^a}.
\end{aligned}
\end{equation}
This completes the analysis of the case $1\leq a<2$. 
\\ \\
\underline{Assume $0< a <1$}. Once again, since $\partial_{x_1}=-\mathcal{R}_1D$, we write
\begin{equation*}
\begin{aligned}
\omega_N^r\partial_{x_1}D^{a}u=&-\omega_N^r\mathcal{R}_1D^{1+a}u\\
=&-[\omega_N^r,\mathcal{R}_1]D^{1+a}u-\mathcal{R}_1([\omega_N^r,D^{1+a}]u)-\mathcal{R}_1D^{1+a}(\omega_N^r u)\\
=&\widetilde{\mathcal{A}}_{1,1}+\widetilde{\mathcal{A}}_{1,2}-\mathcal{R}_1D^{1+a}(\omega_N^r u).
\end{aligned}    
\end{equation*}
Going back to the integral in $\mathcal{A}_1$, we have that the last term of the above identity gives a null contribution to the estimate. Now, since $D=\sum_{j=1}^d \mathcal{R}_j \partial_{x_j}$, we use Proposition \ref{CalderonComGU} when $d=1$, and Proposition \ref{propconmu} when $d\geq 2$ to deduce  
\begin{equation*}
\begin{aligned}
\|\widetilde{\mathcal{A}}_{1,1}\|_{L^2}\lesssim & \sum_{j=1}^d\|[\omega_N^r,\mathcal{R}_1]\partial_{x_j}\mathcal{R}_jD^{a}u\|_{L^2}\\
\lesssim & \|\nabla(\omega_N^r)\|_{L^{\infty}}\|\mathcal{R}_jD^{a}u\|_{L^2}\lesssim  \|u\|_{H^a}.
\end{aligned}    
\end{equation*}
To estimate $\widetilde{\mathcal{A}}_{1,2}$, we write
\begin{equation*}
\begin{aligned}
\left[D^{1+a},\omega_N^r\right]u=:G(\omega_N^{r},u,a)+\sum_{|\beta|=1}\partial^{\beta}(\omega_N^r)D^{1+a,\beta}u,
\end{aligned}    
\end{equation*}
where by Proposition \ref{fractionalDeriv} and Claim \ref{claim1},
\begin{equation}\label{eqadw4.1}
\begin{aligned}
\|G(\omega_N^{r},u,a)\|_{L^2}\lesssim & \|D^{1+a}(\omega_N^r)\|_{L^{\infty}}\|u\|_{L^{2}}\lesssim \|u\|_{L^2}. 
\end{aligned}    
\end{equation}

By using Plancherel's identity and the definition of the operator $D^{1+a,\beta}$ (see Proposition \ref{fractionalDeriv}), it is not difficult to see
\begin{equation}\label{eqadw4.2}
\begin{aligned}
\sum_{|\beta|=1}\|\partial^{\beta}(\omega_N^r)D^{1+a,\beta}u\|_{L^2}\lesssim & \sum_{|\beta|=1}\|\partial^{\beta}(\omega_N^r)\|_{L^{\infty}}\|D^{1+a,\beta}u\|_{L^2}\lesssim \|u\|_{H^{a}}.
\end{aligned}    
\end{equation}
Thus, \eqref{eqadw4.1} and \eqref{eqadw4.2} complete the estimate of $\widetilde{\mathcal{A}}_{1,2}$, and in turn the present case $0<a<1$. Collecting the estimates deduced above, it follows
\begin{equation}\label{desperestim1}
|\mathcal{A}_1|\lesssim \|u\|_{H^a}\|\omega_N^r u\|_{L^2}.    
\end{equation}
Consequently, by \eqref{eqadw0.1} and \eqref{desperestim1} there exists $c_0>0$ such that
\begin{equation}\label{eqadw5}
\begin{aligned}
\frac{d}{dt}\|\omega_{N}^r u\|_{L^2}^2 \leq c_0\|\omega_{N}^r u\|_{L^2}+2\|u\|_{L^{\infty}}^{k-2}\|\partial_{x_1}u\|_{L^{\infty}}\|\omega_{N}^r u\|_{L^2}^2,
\end{aligned}
\end{equation}
where, by \eqref{eqadw0} and Lemma \ref{comwellp}, we have that the above constant $c_0$ depends on $\|u_0\|_{H^s}$ and it is independent of $N$. Now, Sobolev embedding and \eqref{eqadw0} yield $\|\partial_{x_1}u(t)\|_{L^{\infty}} \lesssim \sup_{t\in [0,T]}\|u(t)\|_{H^s}\leq c(\|u_0\|_{H^s})$. Thus, this remark and Gronwall's inequality applied to \eqref{eqadw5} imply that there exist two positive constants $c_1,c_2$ depending on $\|u_0\|_{H^s}$ and independent of $N$ such that
\begin{equation}
    \begin{aligned}
         \left\|\omega_{N}^r u(t)\right\|_{L^2}&\leq (\left\|\omega_{N}^r u(0)\right\|_{L^2}+c_1 t)e^{c_2 t} \\
&\leq (\left\|\langle x \rangle^r u(0)\right\|_{L^2}+c_1 T)e^{c_2 T}, \hspace{0.5cm} 0\leq t \leq T.
    \end{aligned}
\end{equation}
Taking the limit $N \to 0$ in the above inequality establishes \eqref{eqpersi1}. This in turn completes the proof of Theorem \ref{LWPweights}, whenever $0<r<1$.

\begin{remark}\label{RemarkNonlEstim}
The results in \eqref{eqadw0.1},  \eqref{desperestim1} and \eqref{eqadw5} show that Theorem \ref{LWPweights} (i) can be extended to regularity $s\geq a$ provided that there exist a solution $u$ of \eqref{HBO-ZK} in $C([0,T];H^{a}(\mathbb{R}^d))$ such that
\begin{equation*}
\|u\|_{L^{\infty}}^{k-2}\|\partial_{x_1} u\|_{L^{\infty}}\in L^{1}((0,T)).    
\end{equation*}
For example, H\"older's inequality on the time variable shows that the above condition is granted if
\begin{equation*}
u\in L^{k-1}((0,T);W^{1,\infty}(\mathbb{R}^d)).    
\end{equation*}
\end{remark}


\subsection{Proof of Theorem \ref{LWPweights} (ii) and (iii)}

Before we prove Theorem \ref{LWPweights}, we first present the following lemma in which we relate the decay of a function $f$ with that of the derivative of its power function $\nabla(f^k)$ for some integer $k\geq 2$.

\begin{lemma}\label{lemmadecaynonlinear}
Let $k\geq 2$ be an integer, $r>0$. Let $s>\max\{1,\frac{d}{2}-\frac{d}{2k}\}$, and 
\begin{equation}\label{extradecay}
r_1:=\frac{(s-1)(2ks-d(k-1))}{2s^2}r.   
\end{equation}
Assume $f\in H^{s}(\mathbb{R}^d)\cap L^{\infty}(\mathbb{R}^d)\cap L^{2}(|x|^{2r}\, dx)$, then $\nabla(f^k)\in L^2(|x|^{2r_1}\, dx)$.
\end{lemma}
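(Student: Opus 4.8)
The plan is to interpolate between pure decay information and pure regularity information, combining Leibniz-type estimates with the Gagliardo–Nirenberg inequality. First I would write $\nabla(f^k) = k f^{k-1}\nabla f$ and estimate
$\||x|^{r_1}\nabla(f^k)\|_{L^2} \lesssim \||x|^{r_1} f^{k-1}\nabla f\|_{L^2}$. The idea is to split the weight $|x|^{r_1}$ across the factors: put a fractional power $|x|^{\theta r}$ on $f$ (exploiting $f\in L^2(|x|^{2r}\,dx)$) and absorb the remaining derivative cost $\nabla f$, together with the extra copies $f^{k-2}$, into $L^\infty$ and $\dot H^s$ norms via Sobolev embedding and Gagliardo–Nirenberg. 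Concretely, one would use $|x|^{r_1} \le |x|^{\theta r}\cdot |x|^{(1-\theta)\cdot 0}$ is too crude; instead I would bound $\||x|^{r_1} f^{k-1}\nabla f\|_{L^2}$ by placing $|x|^{r_1}$ entirely on one factor of $f$, then use H\"older with exponents adapted to $d$, $k$, and $s$, so that the factor carrying the weight is measured in an $L^p$ space obtained by interpolating $L^2(|x|^{2r}\,dx)$ against $L^\infty$ (or $L^2$), which yields precisely the fraction $\theta = r_1/r$ after optimizing.

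The key computation is to choose the H\"older exponents and interpolation parameter so everything closes. I would set things up as: $\||x|^{r_1} f^{k-1}\nabla f\|_{L^2} \lesssim \| |x|^{r_1} f\|_{L^{p_0}} \|f\|_{L^\infty}^{k-2}\|\nabla f\|_{L^{q_0}}$ with $\frac{1}{2} = \frac{1}{p_0}+\frac{1}{q_0}$, then estimate $\|\nabla f\|_{L^{q_0}} \lesssim \|D^s f\|_{L^2}^{\mu}\|f\|_{L^2}^{1-\mu}$ by Gagliardo–Nirenberg (valid since $s>1$ and $s > \frac{d}{2}-\frac{d}{2k}$ govern the admissible range of $q_0$), and estimate $\||x|^{r_1}f\|_{L^{p_0}}$ by interpolating the weighted $L^2$ bound with the $L^\infty$ bound (or directly with $\|f\|_{L^2}$): $\||x|^{r_1} f\|_{L^{p_0}} \lesssim \||x|^r f\|_{L^2}^{\theta}\|f\|_{L^{p_1}}^{1-\theta}$ with $\theta p_0^{-1}$-type relations forcing $r_1 = \theta r$. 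Matching the two constraints — the one coming from $p_0,q_0$ via Gagliardo–Nirenberg, and the one $r_1 = \theta r$ — produces exactly the algebraic identity \eqref{extradecay}, namely $r_1 = \frac{(s-1)(2ks-d(k-1))}{2s^2}\,r$. This is essentially the computation in \cite[Lemma 1]{FonPO} and its higher-dimensional analogues in \cite{OscarWHBO}; I would follow that template, adjusting the exponents for general $k\ge 2$ and general $d\ge 1$.

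The main obstacle I anticipate is bookkeeping the interpolation exponents so that the $L^p$-indices remain admissible for Gagliardo–Nirenberg (one needs $2 \le q_0$ and $s - d(\tfrac12 - \tfrac1{q_0}) \ge 1$, i.e. the derivative $\nabla f$ must sit at a regularity level reachable from $\dot H^s$), and simultaneously that the weight exponent on the $f$-factor is a genuine interpolation parameter $\theta\in[0,1]$ so that $\||x|^r f\|_{L^2}$ and $\|f\|_{L^\infty}$ (or $\|f\|_{L^2}$) can both be invoked. The hypothesis $s > \max\{1, \tfrac d2 - \tfrac d{2k}\}$ is precisely what guarantees a nonempty range of such exponents; verifying that the optimal choice is compatible and yields the stated closed-form $r_1$ is the one place where care is required, but it is a finite linear-algebra check rather than a conceptual difficulty. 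Once the exponents are fixed, the estimate $\||x|^{r_1}\nabla(f^k)\|_{L^2} \lesssim \|f\|_{L^\infty}^{k-2}\|f\|_{H^s}^{\,\mu'}\||x|^r f\|_{L^2}^{\,\theta}\|f\|_{L^2}^{\,1-\theta}< \infty$ follows immediately, giving $\nabla(f^k)\in L^2(|x|^{2r_1}\,dx)$.
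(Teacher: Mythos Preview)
Your proposed H\"older split places the entire weight $|x|^{r_1}$ on a \emph{single} factor of $f$. The interpolation you invoke, $\||x|^{r_1}f\|_{L^{p_0}} \lesssim \||x|^r f\|_{L^2}^{\theta}\|f\|_{L^{p_1}}^{1-\theta}$ with $r_1=\theta r$, necessarily has $\theta\in[0,1]$, so this route can never produce $r_1>r$. But the formula \eqref{extradecay} \emph{does} give $r_1>r$ precisely when $s>s_{d,k,1}$ (or $0<s<s_{d,k,2}$), and this is the whole point of the lemma: Corollary~\ref{corollarynonlin} and the iterative proof of Theorem~\ref{LWPweights}~(ii),~(iii) rely on the strict gain $r_1>r$ at each step. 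So the claim that ``matching the two constraints produces exactly \eqref{extradecay}'' cannot be right; optimizing your scheme stops at $r_1\le r$, regardless of how $p_0,q_0,\mu$ are chosen.

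The paper's argument avoids this cap by \emph{not} writing $\nabla(f^k)=kf^{k-1}\nabla f$. Instead it bounds $\|\langle x\rangle^{r_1}\nabla(f^k)\|_{L^2}\lesssim \|J(\langle x\rangle^{r_1}f^k)\|_{L^2}$ and applies the weighted interpolation Lemma~\ref{interaniso} directly to $f^k$, trading the single derivative against decay to reach $\|\langle x\rangle^{sr_1/(s-1)}f^k\|_{L^2}+\|J^s(f^k)\|_{L^2}$. The crucial step is then $\|\langle x\rangle^{\rho}f^k\|_{L^2}=\|\langle x\rangle^{\rho/k}f\|_{L^{2k}}^k$, which spreads the weight over \emph{all $k$ copies} of $f$ at once; a further pass through the Sobolev embedding $H^{d(k-1)/(2k)}\hookrightarrow L^{2k}$ and Lemma~\ref{interaniso} closes the estimate with exactly the exponent in \eqref{extradecay}. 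If you want to rescue a direct H\"older argument, you would have to distribute the weight across every factor (including a weighted control of $\nabla f$, itself obtained through Lemma~\ref{interaniso}) rather than parking it on one.
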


\begin{proof}
We use interpolation Lemma \ref{interaniso} to deduce
\begin{equation*}
\begin{aligned}
\|\langle x \rangle^{r_1}\nabla(f^k)\|_{L^2}\lesssim & \|\langle x \rangle^{r_1-1}f^k\|_{L^2}+\|J(\langle x \rangle^{r_1}f^k)\|_{L^2}\\
\lesssim & \|J(\langle x \rangle^{r_1}f^k)\|_{L^2}\\
\lesssim & \|\langle x \rangle^{\frac{s r_1}{s-1}}f^k\|_{L^2}+\|J^{s}(f^k)\|_{L^2}.
\end{aligned}    
\end{equation*}
To estimate $J^s(f^k)$, given that $\|J^s(f^k)\|_{L^2}\sim (\|f^k\|_{L^2}+\|D^s(f^k)\|_{L^2})$, and that $k\geq 2$ is integer, by multiple applications of \eqref{fLR}, we deduce
\begin{equation*}
\begin{aligned}
\|J^s(f^k)\|_{L^2}\lesssim \|f\|_{L^{\infty}}^{k-1}\|J^sf \|_{L^2}.
\end{aligned}    
\end{equation*}
On the other hand, we use interpolation, the definition of $r_1$, and Hardy-Littlewood-Sobolev inequality to deduce
\begin{equation*}
\begin{aligned}
\|\langle x \rangle^{\frac{sr_1}{s-1}}f^{k}\|_{L^2}= \|\langle x \rangle^{\frac{s r_1}{k(s-1)}}f\|_{L^{2k}}^{k}\lesssim & \|J^{\frac{d(k-1)}{2k}}(\langle x \rangle^{\frac{s r_1}{k(s-1)}}f)\|_{L^{2}}^k \\
\lesssim & \|\langle x \rangle^{r}f\|_{L^{2}}^{(1-\theta)k}\|J^{s}f\|_{L^{2}}^{\theta k},
\end{aligned}    
\end{equation*}
where $\theta=\frac{d(k-1)}{2ks}$. This completes the proof of the lemma.

\end{proof}

Under the hypothesis of Lemma \ref{lemmadecaynonlinear}, we have that $r_1>r$ if either $s>s_{d,k,1}$ or $0<s<s_{d,k,2}$, where $s_{d,k,1}$ and $s_{d,k,2}$ are defined by \eqref{defis1} and \eqref{defis2}, respectively. This remark and the proof of Lemma \ref{lemmadecaynonlinear} allow us to deduce the following space-time estimate.

\begin{corollary}\label{corollarynonlin}
Let $T>0$, $r>0$, and $k\geq 2$ integer. Let $s>\max\{1,\frac{d}{2}-\frac{d}{2k}\}$ with $s\in (0,s_{d,k,2})\cup (s_{d,k,1},\infty)$. Consider
\begin{equation*}
g\in L^{\infty}([0,T];H^{s}(\mathbb{R}^d))\cap L^{k-1}((0,T);L^{\infty}(\mathbb{R}^d))\cap L^{\infty}((0,T);L^2(|x|^{2r}\, dx)).
\end{equation*}
Then
\begin{equation*}
    \nabla (g^k) \in L^{1}((0,T);L^2(|x|^{2r_1}\, dx)),
\end{equation*}
where $r_1>r$ is defined by \eqref{extradecay}.
\end{corollary}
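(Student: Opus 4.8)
The plan is to promote the pointwise-in-time bound that is implicit in the proof of Lemma \ref{lemmadecaynonlinear} to a space--time bound by integrating in $t$ and exploiting the prescribed time-integrability of $g$.

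First I would apply, for a.e.\ fixed $t\in(0,T)$, the very same chain of inequalities used to prove Lemma \ref{lemmadecaynonlinear} to the slice $g(\cdot,t)$. The hypotheses guarantee $g(\cdot,t)\in H^{s}(\mathbb{R}^d)\cap L^{\infty}(\mathbb{R}^d)\cap L^{2}(|x|^{2r}\,dx)$ for a.e.\ $t$, and the restriction $s>\max\{1,\tfrac d2-\tfrac d{2k}\}$ makes every step there legitimate \emph{with constants independent of $t$}: it ensures $\tfrac{sr_1}{s-1}$ is finite, that $\theta:=\tfrac{d(k-1)}{2ks}\in(0,1)$ so interpolation Lemma \ref{interaniso} applies, and that the fractional order $\tfrac{d(k-1)}{2k}=\tfrac d2\theta k$ appearing in the Hardy--Littlewood--Sobolev step stays strictly below $\tfrac d2$. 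Combining the interpolation inequality, repeated use of the fractional Leibniz rule \eqref{fLR}, and Hardy--Littlewood--Sobolev exactly as in that proof yields, for a.e.\ $t\in(0,T)$,
\begin{equation*}
\|\langle x\rangle^{r_1}\nabla(g(\cdot,t)^{k})\|_{L^2}\lesssim \|g(\cdot,t)\|_{L^{\infty}}^{k-1}\|J^{s}g(\cdot,t)\|_{L^2}+\|\langle x\rangle^{r}g(\cdot,t)\|_{L^2}^{(1-\theta)k}\|J^{s}g(\cdot,t)\|_{L^2}^{\theta k}.
\end{equation*}

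Next I would integrate this inequality over $(0,T)$. For the first term on the right, I bound $\|J^{s}g(\cdot,t)\|_{L^2}\lesssim\|g\|_{L^{\infty}((0,T);H^{s})}$ and recognize $\int_0^{T}\|g(\cdot,t)\|_{L^{\infty}}^{k-1}\,dt=\|g\|_{L^{k-1}((0,T);L^{\infty})}^{k-1}$, which is finite by hypothesis. For the second term I bound both factors by their $L^{\infty}$-in-time norms, so the integrand is dominated by the constant $\|g\|_{L^{\infty}((0,T);L^{2}(|x|^{2r}))}^{(1-\theta)k}\|g\|_{L^{\infty}((0,T);H^{s})}^{\theta k}$ and the $t$-integral is at most $T$ times this. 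Hence $\nabla(g^{k})\in L^{1}((0,T);L^{2}(|x|^{2r_1}\,dx))$, as claimed. That $r_1>r$ under the assumption $s\in(0,s_{d,k,2})\cup(s_{d,k,1},\infty)$ is the content of the remark preceding the corollary: substituting \eqref{extradecay}, the inequality $r_1>r$ reduces to $s^{2}-\big(\tfrac d2+\tfrac{k}{k-1}\big)s+\tfrac d2>0$, whose roots are exactly $s_{d,k,1}$ and $s_{d,k,2}$ from \eqref{defis1}--\eqref{defis2}.

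Since the analytic substance is already packaged in Lemma \ref{lemmadecaynonlinear}, I do not anticipate a genuine obstacle; the only care needed is to track that the implicit constants in that lemma are uniform in $t$ (they depend only on $d,k,s,r$) and to route each factor to the correct space--time norm---the power $\|g(\cdot,t)\|_{L^{\infty}}^{k-1}$ is the single factor that genuinely consumes the $L^{k-1}((0,T);L^{\infty})$ hypothesis, while the remaining factors are absorbed by the $L^{\infty}$-in-time bounds.
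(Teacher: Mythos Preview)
Your proposal is correct and follows exactly the route the paper intends: the paper states that the corollary is a direct consequence of the remark preceding it together with the proof of Lemma~\ref{lemmadecaynonlinear}, and you have simply written out those details---apply the pointwise-in-$t$ estimate from that proof and integrate, routing $\|g(\cdot,t)\|_{L^\infty}^{k-1}$ to the $L^{k-1}_t L^\infty_x$ norm and the remaining factors to the $L^\infty_t$ norms. The only slip is the parenthetical identity $\tfrac{d(k-1)}{2k}=\tfrac d2\theta k$ (it should be $\theta s$), but this is cosmetic and does not affect the argument.
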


We are in the condition to deduce the proof of Theorem \ref{LWPweights} (ii) and (iii).

\begin{proof}[Proof of Theorem \ref{LWPweights} (ii) and (iii)]

Given $1\leq r<\frac{d}{2}+1+a$, $s\geq \{(\frac{d}{2}+1)^{+},ar+1\}$, with $s\in (0,s_{d,k,2})\cup (s_{d,k,1},\infty)$, we consider $u_0\in H^s(\mathbb{R}^d)\cap L^2(|x|^{2r}\, dx)$. Let  $u\in C([0,T];H^{s}(\mathbb{R}^d))$ be the solution of \eqref{HBO-ZK} with initial condition $u_0$ provided by Lemma \ref{comwellp}. Following the same strategy stated in the proof of (i) in Theorem \ref{LWPweights}, we will only show that \eqref{eqpersi1} holds true. Since we have already established some lower decay cases, we know
\begin{equation}\label{previouscasecond}
    u\in C([0,T];L^2(|x|^{2r_1}\, dx)),
\end{equation}
for all $0<r_1<1$. We first notice that by Lemma \ref{linearestilemma} and the hypothesis on the function $u_0$, it follows
\begin{equation}\label{integralcond0}
U(t)u_0\in L^{\infty}([0,T];L^2(|x|^{2\widetilde{r}}\, dx)).    
\end{equation}
for all $0<\widetilde{r}\leq r$. Thus, since $u$ solves the integral equation 
\begin{equation}\label{integraleq}
    u=U(t)u_0-\nu\int_0^t U(t-\tau)(u^{k-1}\partial_{x_1} u)(s)\, ds,
\end{equation}
to complete the Proof of Theorem \ref{LWPweights}, it is enough to show
\begin{equation}\label{integralcond1}
\int_0^t U(t-\tau)(u^{k-1}\partial_{x_1} u)(\tau)\, d\tau\in  L^{\infty}([0,T];L^2(|x|^{2\widetilde{r}}\, dx)).    
\end{equation}
By Lemma \ref{linearestilemma},  \eqref{integralcond1} is valid provided that
\begin{equation}\label{integralcond1.1}
    \partial_{x_1}(u^k)\in L^{1}((0,T);H^{a\widetilde{r}}(\mathbb{R}^d))\cap L^{1}((0,T);L^{2}(|x|^{2\widetilde{r}}\, dx)).
\end{equation}
In what follows, we will perform some iterations on the size of the weight $\widetilde{r}$ to show that  \eqref{integralcond1.1} holds true. 
\\ \\
Given that $u\in C([0,T];H^s(\mathbb{R}^d))$, $s\geq \max\{\big(\frac{d}{2}+1\big)^{+},ar+1\}$, by Sobolev embedding,  we have $u,\nabla u\in L^{k-1}((0,T);L^{\infty}(\mathbb{R}^d))$. Let $0<r_1<1$ fixed, by \eqref{previouscasecond} and Corollary \ref{corollarynonlin}, it follows
\begin{equation}\label{integralcond2}
\begin{aligned}
\partial_{x_1}(u^k)\in L^{1}((0,T);L^{2}(|x|^{2r_2}\, dx)),
\end{aligned}    
\end{equation}
where $r_2= \theta_{s,k}r_1$ with $\theta_{s,k}:=\frac{(s-1)(2ks-d(k-1))}{2s^2}$. Given that $\theta_{s,k}>1$, \eqref{integralcond2} implies that \eqref{integralcond1} follows with $\widetilde{r}=r_2$, then \eqref{integralcond0} yields
\begin{equation}
u\in L^{\infty}([0,T];L^2(|x|^{2r_2}\, dx)).
\end{equation}
Consequently, setting $r_{j}=\theta_{s,k}r_{j-1}$, $j\geq 2$, we can iterate the argument just describe a finite number of times, up to the least integer $j$ where $r_j\geq r$, if $r_j\geq a+1+\frac{d}{2}$, we let $r_j=r$. The idea is that in each step $l=1,\dots,j$, we show the validity of \eqref{integralcond1.1} with $\widetilde{r}=r_l$, thus by \eqref{integralcond1}, \eqref{integralcond0}, and the integral equation, we have that \eqref{eqpersi1} holds true. This in turn completes the proof of part (ii) of Theorem \ref{LWPweights}.
\\ \\ 
Assuming the hypothesis of Theorem \ref{LWPweights} (iii), the fact that $\widehat{u}_0(0)=0$, and Lemma \ref{linearestilemma} show that \eqref{integralcond0} follows for $0<\widetilde{r}\leq r<a+2+\frac{d}{2}$, also notice that $\mathcal{F}(u^{k-1}\partial_{x_1}u)(0)=0$, then we can use the same iterative argument described above to conclude the proof of Theorem \ref{LWPweights} (iii). 

\end{proof}


\section{Unique continuation principles and further results} \label{uniquep}

This part concerns the deduction of the unique continuation principles stated in Theorems \ref{theoremtwotimes} and \ref{theoremthreetimes}. We also study the sharpness of the three times condition in Corollary \ref{sharpthree}. Finally, we deduce Corollary \ref{oddcasetheorem} in which we get some decay properties of \eqref{HBO-ZK} when $k\geq 2$ is odd. We conclude by showing our results for the Cauchy problem \eqref{CombHBO-ZK}, that is, we deduce Corollary \ref{corollaryCombinedfKdV}.

\begin{proof}[Proof of Theorem \ref{theoremtwotimes}]
Without loss of generality, we will assume that $t_1=0$, $t_2\neq 0$. Since $u\in C([0,T];H^s(\mathbb{R}^d)\cap L^2(|x|^{2(a+1+\frac{d}{2})^{-}}\, dx))$, Corollary \ref{corollarynonlin}, the fact that $\widehat{u^{k-1}\partial_{x_1} u}(0)=0$, and Lemma \ref{linearestilemma} imply
\begin{equation}\label{unicontwoeq1}
    \int_0^t U(t-\tau)(u^{k-1}\partial_{x_1}u)(\tau)\, d\tau\in L^{\infty}([0,T];L^2(|x|^{2(a+1+\frac{d}{2})}\, dx)).
\end{equation}
Given that $u_0\in L^2(|x|^{2(a+1+\frac{d}{2})}\, dx)$, using the integral equation \eqref{integraleq}, the assumption at time $t_2$ together with \eqref{unicontwoeq1}, we obtain
\begin{equation*}
\begin{aligned}
U(t_2)u_0\in L^2(|x|^{2(a+1+\frac{d}{2})}\, dx).
\end{aligned}    
\end{equation*}
Hence, Lemma \ref{unicontlemma} implies that $\widehat{u}_0(0)=0$. The proof is complete. 
\end{proof}

\begin{proof}[Proof of Theorem \ref{theoremthreetimes}]

We first deduce part (i). Let $k\geq 2$ be an even number, and $t_1<t_2<t_3$. The equation in \eqref{HBO-ZK} yields the following identity
\begin{equation}\label{unicontident1}
\frac{d}{dt}\int_{\mathbb{R}^d} x_j u(x,t)\, dx=\frac{\delta_{j,1} \nu}{k}\int_{\mathbb{R}^d} (u(x,t))^k\, dx,  
\end{equation}
$j=1,\dots, d$, where $\delta_{j,1}$ denotes Kronecker delta function.  We emphasize that the above identity is valid for all $0<a<2$ provided that $u\in C([0,T];H^s(\mathbb{R}^d))\cap L^{2}(|x|^{2(a+1+\frac{d}{2})^{+}})$, with $\widehat{u}(0,t)=0$ for all $t\in [0,T]$. We will show that there exists $\widetilde{t}_1\in (t_1,t_2)$ and $\widetilde{t}_2\in (t_2,t_3)$ such that 
\begin{equation}\label{eqthreetimes1}
    \int_{\mathbb{R}^d}x_1 u(x,\widetilde{t}_l)\, dx=0, \qquad l=1,2. 
\end{equation}
Once we have established \eqref{eqthreetimes1}, by integrating \eqref{unicontident1} with $j=1$ over $[\widetilde{t}_1,\widetilde{t}_2]$, we deduce
\begin{equation*}
\begin{aligned}
0=\int_{\mathbb{R}^d}x_1 u(x,\widetilde{t}_2)\, dx-\int_{\mathbb{R}^d}x_1 u(x,\widetilde{t}_1)\, dx=\frac{\nu}{k}\int_{\widetilde{t}_1}^{\widetilde{t}_2}\int_{\mathbb{R}^d} (u(x,t))^k\, dx.
\end{aligned}    
\end{equation*}
Since $k\geq 2$ is an even number, by continuity, the above identity shows that $u(\cdot,t)\equiv 0$ for all $t\in[\widetilde{t}_1,\widetilde{t}_2]$. Thus, the $L^2$ conservation law implies $u\equiv 0$. We remark that this previous strategy has been applied in \cite{FLinaPioncedGBO,FonPO}. 
\\ \\
Next, we deduce \eqref{eqthreetimes1}. Without loss of generality, we assume that $t_1=0$. We will only show \eqref{eqthreetimes1} for $\widetilde{t}_1\in (0,t_2)$ as the considerations for $\widetilde{t}_2\in (t_2,t_3)$ follow by the exact same arguments. Writing $a+2+\frac{d}{2}=r_1+r_2$ with $r_1\in \mathbb{Z}^{+}$, $r_2\in [0,1)$, by the proof of Lemma \ref{unicontlemma}, see \eqref{equnicont7} and the arguments around \eqref{equnicont6}, and the fact that $u_0\in H^{2(a+2+\frac{d}{2})}(\mathbb{R}^d)\cap L^2(|x|^{a+2+\frac{d}{2}}\, dx)$ with $\widehat{u}_0(0)=0$, we obtain
\begin{equation*}
\begin{aligned}
U(t_2)u_0\in L^{2}(|x|^{2(a+2+\frac{d}{2})}\, dx),
\end{aligned}    
\end{equation*}
if and only if for all homogeneous polynomial $P_{r_1}(\xi)$ of order $r_1$, it follows
\begin{equation*}
t_2|\xi|^{-2(r_1-1)}|\xi|^{a}P_{r_1}(\xi)\phi(\xi) \partial_{\xi_j}\widehat{u}_0(0)\in \dot{H}^{r_2}(\mathbb{R}^d),    
\end{equation*}
which by Claim \ref{claimL2integ} is valid if and only if $t\partial_{\xi_j}\widehat{u}_0(0)=0$, for all $j=1,\dots,d$. On the other hand, by Corollary \ref{corollarynonlin}, we know $u^{k-1}\partial_{x_1}u\in L^{1}((0,T);L^{2}(|x|^{2(a+2+\frac{d}{2})}\, dx))$, then by a similar discussion as before, based on the proof of Lemma \ref{unicontlemma}, we have
\begin{equation*}
\begin{aligned}
\int_0^{t_2} U(t_2-\tau)u^{k-1}\partial_{x_1}u(\tau)\, d\tau \in L^{2}(|x|^{2(a+2+\frac{d}{2})}\, dx),
\end{aligned}    
\end{equation*}
if and only if
\begin{equation*}
\begin{aligned}
\int_0^{t_2} (t_2-\tau)\partial_{\xi_j}\widehat{(u^{k-1}\partial_{x_1}u)}(0,\tau)\, d\tau=0, \quad \text{ for all }  j=1,\dots,d.
\end{aligned}    
\end{equation*}
Although the previous remark was given individually, when studying the integral formulation of \eqref{HBO-ZK}, by the same reasoning, it must follow that $u(\cdot,t_2)\in L^2(|x|^{2(a+2+\frac{d}{2})}\, dx)$, if and only if
\begin{equation*}
\begin{aligned}
t_2\partial_{\xi_j}\widehat{u}_0(0)-\nu \int_0^{t_2} (t_2-\tau)\partial_{\xi_j}\widehat{(u^{k-1}\partial_{x_1}u)}(0,\tau)\, d\tau=0.
\end{aligned}
\end{equation*}
When $j=2,\dots, d$, the above identity reduces to $t_2\partial_{\xi_j}\widehat{u}_0(0)=0$. When $j=1$, the above identity is equivalent to 
\begin{equation*}
\begin{aligned}
0=&-i t_2 \int x_1 u_0(x)\, dx+\frac{i \nu}{k}\int_0^{t_2} (t_2-\tau)\int_{\mathbb{R}^d} x_1\partial_{x_1}(u^k)(x,\tau)\, dx\, d\tau\\
=&-i t_2 \int x_1 u_0(x)\, dx-\frac{i\nu}{k}\int_0^{t_2} (t_2-\tau)\int_{\mathbb{R}^d} (u(x,\tau))^k\, dx\, d\tau,
\end{aligned}
\end{equation*}
which by \eqref{unicontident1} and integration by parts allow us to conclude
\begin{equation*}
\begin{aligned}
0=&-it_2\int x_1 u_0(x)\, dx-i\int_0^{t_2} (t_2-\tau)\frac{d}{d\tau}\Big(\int_{\mathbb{R}^d} x_1 u(x,\tau)\, dx\Big)\, d\tau\\
=&-i\int_0^{t_2}\int_{\mathbb{R}^d} x_1 u(x,\tau)\, dx\, d\tau.
\end{aligned}
\end{equation*}
Consequently, by continuity, there exists $\widetilde{t}_1$ such that \eqref{eqthreetimes1} holds true. This completes the proof of Theorem \ref{theoremthreetimes} (i).
\\ \\
Let us deduce (ii). Without loss of generality, we assume $t_1=0$. By the arguments developed in the previous case, we know that there exists $\widetilde{t}_1\in(0,t_2)$ such that \eqref{eqthreetimes1} holds true. If $\int_{\mathbb{R}^d}x_1 u(x,0)\,dx =0$, the proof of the present case follows by integrating over $[0,\widetilde{t}_1]$ the identity \eqref{unicontident1} with $j=1$. Likewise, if $\int_{\mathbb{R}^d}x_1 u(x,t_2)\,dx =0$, the desired result is obtained by integrating over $[\widetilde{t}_1,t_2]$ identity \eqref{unicontident1}. The proof is complete.
\end{proof}

\begin{proof}[Proof of Corollary \ref{sharpthree}]
By the proof of Theorem \ref{theoremthreetimes}, we have that
$u(\cdot,t)\in L^{2}(|x|^{2(a+2+\frac{d}{2})}\, dx)$ if and only if,
\begin{equation*}
    \int_0^t \int_{\mathbb{R}^d} x_1 u(x,\tau)\,dx d\tau=0,
\end{equation*}
and
\begin{equation*}
    \int_{\mathbb{R}^d} x_j u_0(x)\,dx=0,
\end{equation*}
for all $j=2,\dots, d$. Then, the previous observations and similar ideas in the proof of Theorem \ref{theoremthreetimes} yield the desired result.
\end{proof}

Finally, we deduce Corollary \ref{oddcasetheorem}.

\begin{proof}[Proof of Corollary \ref{oddcasetheorem}] Here we set $m\geq 2$ be integer, $a+m+\frac{d}{2}\leq r<a+1+m+\frac{d}{2}$. When $k\geq 2$ is odd, by using the integral formulation of \eqref{HBO-ZK} and Lemma \ref{linearestilemma}, the desired persistence result, i.e., $u\in L^{\infty}([0,T];L^2(|x|^{2r}\,dx)))$ follows by similar arguments in the proof of Theorem \ref{LWPweights}. However, to apply Lemma \ref{linearestilemma}, we must verify that \eqref{linearEstimcomp1} holds true for the nonlinear term in \eqref{HBO-ZK}. In other words, integration by parts yields that \eqref{linearEstimcomp1} is valid provided that
\begin{equation}\label{oddcorol}
    0=\int_{\mathbb{R}^d}x^{\beta}u^{k-1}\partial_{x_1}u(x,t)\, dx=-\frac{1}{k}\int_{\mathbb{R}^d}\partial_{x_1}(x^{\beta})(u(x,t))^k \, dx, 
\end{equation}
for all multi-index $|\beta|\leq m-1$. Since $\partial_{x_1}(x^{\beta})=cx^{\widetilde{\beta}}$ for some multi-index $\widetilde{\beta}$ with $|\widetilde{\beta}|\leq m-2$, the identity \eqref{oddcorol} is guarantee by one of the hypothesis in Corollary \ref{oddcasetheorem}. This remark completes the proof.
\end{proof}

\begin{proof}[Proof of Corollary \ref {corollaryCombinedfKdV}]
We begin by noticing that the well-posedness result in $H^s(\mathbb{R}^d)$ provided by Lemma \ref{comwellp} holds true for the Cauchy problem \eqref{CombHBO-ZK}. Thus, the persistence in weighted spaces is obtained by following the same arguments in the proof of Theorem \ref{LWPweights}, which mostly depend on the linear estimates for $\mathcal{A}_1$ in \eqref{awdiffeeq1}, and Lemma \ref{linearestilemma}. It is worth mentioning that the estimates for the nonlinear terms in \eqref{CombHBO-ZK} follow in a similar fashion applying arguments as in Corollary \ref{corollarynonlin} to each of the nonlinearities individually. On the other hand, the two times unique continuation principles follow by the same reasoning in the proof of Theorem \ref{theoremtwotimes}, again a key observation is that we can apply all of our estimates individually to each of the nonlinearities involved.  This discussion encloses the proof of Corollary \eqref{corollaryCombinedfKdV} (i) and (ii).

The proof of Corollary \ref{corollaryCombinedfKdV}  (iii), (iv) and (v) are a direct adaptation of the proof of Theorem \ref{theoremthreetimes} and Corollaries \eqref{sharpthree} and \eqref{oddcasetheorem}, respectively. A difference is that the first momentum identities for the Cauchy problem \eqref{CombHBO-ZK} are given by
\begin{equation*}
\frac{d}{dt}\int_{\mathbb{R}^d} x_j u(x,t)\, dx=\sum_{j=1}^n\frac{\delta_{j,1} \nu_j }{k_j}\int_{\mathbb{R}^d} (u(x,t))^{k_j}\, dx,
\end{equation*}
for all $j=1,\dots, n$.
\end{proof}


\section*{Acknowledgments}

O. R. received support from Universidad Nacional de Colombia-Bogot\'a.


\bibliographystyle{acm}
\bibliography{bibli}

\end{document}